\newtheorem{theorem}{Theorem}[section]
\newtheorem{proposition}[theorem]{Proposition}
\newtheorem{lemma}[theorem]{Lemma}
\newtheorem{corollary}[theorem]{Corollary}
\newtheorem{remark}[theorem]{Remark}
\newtheorem{main-theorem}[theorem]{Main Theorem}
\title[The 
Iwasawa, Bruhat decompositions
of $\mathrm{F}_{4(-20)}$]
{The Iwasawa decomposition
and the  Bruhat decomposition of
the automorphism group
on certain exceptional Jordan algebra.}
\author{Akihiro Nishio}
\date{2013/02/11}
\begin{document}

\begin{abstract}
Let $\mathcal{J}^1$ be the real form of 
a complex simple Jordan algebra
such that the automorphism group is $\mathrm{F}_{4(-20)}$.
By using some orbit types
of $\mathrm{F}_{4(-20)}$ on $\mathcal{J}^1$,
for $\mathrm{F}_{4(-20)}$,
explicitly, we give
the Iwasawa decomposition, the 
Oshima--Sekiguchi's
$K_{\epsilon}-$Iwasawa
decomposition,
the Matsuki decomposition, 
and the  Bruhat and Gauss decompositions.
\end{abstract}

\subjclass[2010]{Primary 20G41, Secondary 17C30, 57S20}

\address{
Graduate~school~of~Engineering,
University~of~Fukui,
Fukui-shi, 910-8507, Japan}

\email{nishio@quantum.apphy.u-fukui.ac.jp}

\maketitle

This article is a continuation of \cite{Na2012}.
\begin{center}
{\large Contents}
\end{center}

\quad\ \ \ref{itd}. Overview.
\smallskip

\quad\  \ref{prl2}. Preliminaries.
\smallskip

\quad\  \ref{id}. 
The Iwasawa decomposition of 
$\mathrm{F}_{4(-20)}$.
\smallskip

\quad\  \ref{ke}. 
The $K_{\epsilon}-$Iwasawa decomposition
of $\mathrm{F}_{4(-20)}$
\smallskip

\quad\ \ref{md}. 
The Matsuki decomposition
of $\mathrm{F}_{4(-20)}$
\smallskip

\quad\  \ref{gd}. The Bruhat and Gauss 
decompositions of $\mathrm{F}_{4(-20)}$.
\smallskip

\quad\  Appendix A. 
The 
explicit formula $c$-function
of $\mathrm{F}_{4(-20)}$

\addtocounter{section}{+8}
\section{Overview.}\label{itd}
Let $G$ be a connected non-compact
semisimple $\mathbb{R}-$Lie group
of which
the center $Z(G)$ is finite.
We denote its $\mathbb{R}-$Lie algebra 
by $\mathfrak{g}=Lie(G)$.
Let $\theta$ be a Cartan involution of $\mathfrak{g}$
and its Cartan decomposition
$\mathfrak{g}=\mathfrak{k}\oplus \mathfrak{p}$
where
$\mathfrak{k}:
=\{X\in\mathfrak{g}|~\theta X=X\}$ and
$\mathfrak{p}:
=\{X\in\mathfrak{g}|~\theta X=-X\}$.
Let
$\mathfrak{a}$ be a maximal abelian subspace 
of $\mathfrak{p}$,
$\mathfrak{a}^*$
the dual space of $\mathfrak{a}$,
and  
$\mathfrak{m}=Z_{\mathfrak{k}}(\mathfrak{a})$
the centralizer of the subset $\mathfrak{a}$ of 
the Lie algebra $\mathfrak{k}$.
For each  $\lambda \in \mathfrak{a}^*$, 
let
$\mathfrak{g}_{\lambda}
:=\{X\in\mathfrak{g}|~[H,X]=\lambda(H)X~
{\rm for~all}~H\in\mathfrak{a}\}$.
$\lambda$ is called a {\it root} 
of $(\mathfrak{g},\mathfrak{a})$
if $\lambda \ne 0$ 
and $\mathfrak{g}_{\lambda} \ne \{0\}$.
We denote the set of roots of $(\mathfrak{g},\mathfrak{a})$
by $\Sigma$.
Then $\mathfrak{g}
= \mathfrak{g}_0 \oplus \sum_{\lambda \in \Sigma}
\mathfrak{g}_{\lambda}$, $[\mathfrak{g}_{\lambda},
\mathfrak{g}_{\mu}] \subset \mathfrak{g}_{\lambda+\mu}$,
$\theta \mathfrak{g}_{\lambda}=\mathfrak{g}_{-\lambda}$,
and $\mathfrak{g}_0 = \mathfrak{a} \oplus \mathfrak{m}$
(cf. \cite[Ch~V]{Knp1986}).
We introduce an ordering in $\mathfrak{a}^*$,
and this ordering single out the set
$\Sigma^+$
of positive roots.
We denote
$\Sigma^-:=\{-\lambda|~\lambda\in\Sigma^+\}$,
$\mathfrak{n}^+
:=\sum_{\lambda\in\Sigma^+}\mathfrak{g}_{\lambda}$,
and $\mathfrak{n}^-
:=\sum_{\lambda\in\Sigma^-}\mathfrak{g}_{\lambda}$.
Then $\mathfrak{n}^+$ and $\mathfrak{n}^-$
are nilpotent subalgebras
such that
$\theta\mathfrak{n}^\pm=\mathfrak{n}^\mp~(resp)$ and 
$\mathfrak{g}=\mathfrak{n}^-
\oplus \mathfrak{a} \oplus \mathfrak{m} \oplus 
\mathfrak{n}^+$.
For each involutive automorphism $\varphi$ 
on $G$,
we denote the subgroup $G^{\varphi}
=\{g \in G|~\varphi (g)=g\}$ of $G$.
Let $\Theta$ be an involutive automorphism  on $G$,
of which
the differential at the identity element is the Cartan involution $\theta$
of $\mathfrak{g}$:
$d\Theta=\theta$,
and $K:=G^{\Theta}$.
Note that
$Lie(K)=\mathfrak{k}$, 
$K$ is connected and closed,
and that
$K$ is a maximal compact subgroup of $G$
(cf. \cite[Ch~VI,Theorem~1.1]{Hl2001}).
We denote the subgroups $A:=\exp \mathfrak{a}$, 
 $N^\pm:=\exp \mathfrak{n}^{\pm}~(resp)$,
 and
$M:=Z_K(\mathfrak{a})$
the centralizer of the set $\mathfrak{a}$ of  $K$,
respectively.
Then the identity connected component $M^0$ of $M$ 
is a connected Lie subgroup corresponding
to $\mathfrak{m}$, and $\Theta (N^\pm)=N^\mp~(resp)$.
We denote the normalizer of the subset  $\mathfrak{a}$
of $K$ by
$M^*:=N_K(\mathfrak{a})$,
and the finite factor group $W:=M^*/M$.
For all $w\in W$, we
fix a representative $\tilde{w}\in M^*$.
Then 
\begin{align*}
{\rm (1)} &~~ G=KAN^+ &&{\rm (Iwasawa~decomposition),}\\
{\rm (2)} &~~G=\coprod_{w\in W}
 N^-\tilde{w}MAN^+&& {\rm (Bruhat~decomposition),}\\
{\rm (2)'} &~~G
=\overline{N^-MAN^+}&& {\rm (Gauss~decomposition).}
\end{align*}
(cf. \cite{Hl2001}, \cite{Mlch1995} ).
For any $g\in G$, 
there exist
unique elements $k(g)\in K,
H(g)\in \mathfrak{a}$, and $n_I(g)\in N^+$
such that \[g=k(g)(\exp H(g)) n_I(g).\]
In ${\rm (2)'}$,  the submanifold $N^-MAN^+$ 
is open dense in $G$,
and for any $g\in N^-MAN^+$,
there exist
unique elements $n^-_G(g)\in N^-$,
$m_G(g)\in M$, $a_G(g)\in A$, and $n^+_G(g)\in N^+$
such that
\[g=n^-_G(g)m_G(g)a_G(g)n^+_G(g).\]
However, in this article, the existence and uniqueness 
of factors of Iwasawa and Gauss decompositions
for the Lie group $\mathrm{F}_{4(-20)}$
will be shown  
by using concrete  $\mathrm{F}_{4(-20)}$-orbits
and stabilizers of $\mathrm{F}_{4(-20)}$ in \cite{Na2012}.

According to \cite[Definition 1.1]{OS1980},
a {\it signature of roots} 
is defined by the mapping $\epsilon$
of $\Sigma$ to $\{-1,1\}$ such that $\epsilon$ satisfies
the following conditions:
\begin{align*}
{\rm (i)}&~~ \epsilon(\lambda)=\epsilon(-\lambda) &&
{\rm for~any}~\lambda\in \Sigma,\\
{\rm (ii)}&~~ \epsilon(\lambda+\mu)
=\epsilon(\lambda)\epsilon(\mu) &&
{\rm if}~\lambda,\mu~\text{and}~\lambda+\mu\in \Sigma.
\end{align*}
According to \cite[Definition 1.2]{OS1980},
for 
any signature $\epsilon$ of roots
with respect to  the Cartan involution $\theta$,
an involutive automorphism $\theta_{\epsilon}$ of
$\mathfrak{g}$ is defined as
\begin{align*}
{\rm (i)}&~~\theta_{\epsilon}(X)
:=\epsilon(\lambda)\theta(X)&&
{\rm for~any}~\lambda\in \Sigma
~~{\rm and}~~X\in \mathfrak{g}_{\lambda},\\
{\rm (ii)}&~~
\theta_{\epsilon}(X):=\theta(X)
&&{\rm for~any}~X\in \mathfrak{a}\oplus\mathfrak{m}.
\end{align*}
Setting $\mathfrak{k}_{\epsilon}:
=\{X\in\mathfrak{g}|~\theta_{\epsilon}X=X\}$ and
$\mathfrak{p}_{\epsilon}:
=\{X\in\mathfrak{g}|~\theta_{\epsilon}X=-X\}$,
$\mathfrak{g}=\mathfrak{k}_{\epsilon}\oplus
\mathfrak{p}_{\epsilon}$.
We denote the connected Lie subgroup
having the Lie algebra $\mathfrak{k}_{\epsilon}$
by $(K_{\epsilon})^0$.
We define the subgroup $K_{\epsilon}$ by
\[K_{\epsilon}:=(K_{\epsilon})^0M.\]
In fact, 
since
all elements of $M$ normalize $(K_{\epsilon})^0$
from
\cite[Lemma~1.4(i)]{OS1980},
$K_{\epsilon}$
is a subgroup of $G$.
We denote 
\[M_{\epsilon}^*:=K_{\epsilon}\cap M^*,\quad
W_{\epsilon}:=M_{\epsilon}^*/M.\]

\begin{proposition}\label{itd-01}
{\rm (T. Oshima and J. Sekiguchi
\cite[Proposition~1.10]{OS1980})}.
Let the factor set $W_{\epsilon}\backslash W
=\{w_1=1,w_2,\cdots ,w_r\}$ 
where $r=[W:W_{\epsilon}]$.
Fix representatives
$\tilde{w}_1=1,\tilde{w}_2,\cdots ,\tilde{w}_r
\in M_{\epsilon}^*
=K_{\epsilon}\cap M^*$
for $w_1=1,w_2,\cdots ,w_r$.
Then the decomposition
\[G\supset \cup_{i=1}^rK_{\epsilon}\tilde{w}_iAN^+\]
has the following properties.

\noindent
{\rm (1)} If $k\tilde{w}_ian=k'\tilde{w}_ja'n'$ 
with $k,k'\in K_{\epsilon},$
$a,a'\in A$, and $n,n'\in N^+$, 
then $k=k',$ $i=j,$ $a=a'$, and $n=n'$.
\smallskip

\noindent
{\rm (2)} The map $(k,a,n)\mapsto k\tilde{w}_ian$ 
defines an analytic
diffeomorphism of the product manifold 
$K_{\epsilon}\times
A\times N^+$ onto the open submanifold 
$K_{\epsilon}\tilde{w}_iAN^+$ of
$G~(i=1,\cdots r)$. 
\smallskip

\noindent
{\rm (3)} The submanifold
$\cup_{i=1}^rK_{\epsilon}\tilde{w}_iAN^+$
is open dense in $G$.
\end{proposition}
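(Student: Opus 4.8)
The plan is to derive all three assertions from a single structural fact about $K_\epsilon$-orbits on the real flag manifold $G/B$, where $B:=MAN^+$ is the minimal parabolic subgroup. Since $M\subseteq K_\epsilon$ and each $\tilde w_i\in M^*$ normalizes $M$, one checks $K_\epsilon\tilde w_iAN^+=K_\epsilon\tilde w_iMAN^+=K_\epsilon\tilde w_iB$, so each set $K_\epsilon\tilde w_iAN^+$ is right $B$-invariant and is exactly the preimage in $G$ of the orbit $O_i:=K_\epsilon\tilde w_iB\subseteq G/B$. Thus (1), (2), (3) translate into: the $O_i$ are open, they are pairwise distinct for distinct $i$, and their union is dense. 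The engine is the vector-space decomposition $\mathfrak g=\mathfrak k_\epsilon\oplus\mathfrak a\oplus\mathrm{Ad}(\tilde w_i)\mathfrak n^+$, valid for every $i$, which I establish first.

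I prove this generalized direct sum by comparing $\theta_\epsilon$ with $\theta$ root space by root space: on $\mathfrak g_\lambda\oplus\mathfrak g_{-\lambda}$ the $(+1)$-eigenspace of $\theta_\epsilon$ is $\{X+\epsilon(\lambda)\theta X:X\in\mathfrak g_\lambda\}$, of dimension $\dim\mathfrak g_\lambda$, whence $\dim\mathfrak k_\epsilon=\dim\mathfrak k$ and the three summands have total dimension $\dim\mathfrak g$. For triviality of the intersection, if $X=H+\sum_{\mu\in w_i\Sigma^+}X_\mu\in\mathfrak k_\epsilon$ then $\theta_\epsilon X=X$ while $\theta_\epsilon H=-H$ and $\theta_\epsilon X_\mu\in\mathfrak g_{-\mu}$; comparing the $\mathfrak a$-part gives $H=0$, and since $w_i\Sigma^+$ and $w_i\Sigma^-$ are disjoint, comparing root-space parts forces every $X_\mu=0$. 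Assertion (2) and the openness of $O_i$ then follow from a routine differential computation: after left-translating the map $(k,a,n)\mapsto k\tilde w_ian$ to the identity and applying $\mathrm{Ad}$, its image is $\mathrm{Ad}(\tilde w_i)\big(\mathfrak k_\epsilon+\mathfrak a+\mathfrak n^+\big)$ up to the identification just proved, hence all of $\mathfrak g$, so the map is everywhere a local diffeomorphism onto an open set; combined with the injectivity from (1) this upgrades to a diffeomorphism onto $K_\epsilon\tilde w_iAN^+$.

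For the uniqueness and disjointness (1), the main tool is the involution $\Theta_\epsilon$ of $G$ with $d\Theta_\epsilon=\theta_\epsilon$, which fixes $K_\epsilon$ pointwise (by construction together with \cite[Lemma~1.4]{OS1980}), inverts $A$, interchanges $N^+$ and $N^-$, and fixes $M$ and each $\tilde w_i$. Given a coincidence $k\tilde w_ian=k'\tilde w_ja'n'$, passing to $G/B$ yields $\tilde w_j^{-1}\kappa\tilde w_i\in B$ with $\kappa:=k'^{-1}k\in K_\epsilon$; writing this element as $m\alpha\nu$ with $m\in M$, $\alpha\in A$, $\nu\in N^+$ and applying $\Theta_\epsilon$ to the identity $\kappa=\tilde w_j\,m\alpha\nu\,\tilde w_i^{-1}$ gives $\Theta_\epsilon(\nu)=\alpha^2\nu$ with $\Theta_\epsilon(\nu)\in N^-$ and $\alpha^2\nu\in AN^+$. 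Uniqueness of the open Gauss cell $N^-MAN^+$ (equivalently $N^-\cap AN^+=\{e\}$) forces $\alpha=e$ and $\nu=e$, so $\kappa=\tilde w_j m\tilde w_i^{-1}\in K_\epsilon\cap M^*=M_\epsilon^*$; reading $\kappa$ in $W$ shows $w_jw_i^{-1}\in W_\epsilon$, hence $i=j$ as the $w_i$ are distinct coset representatives, and back-substitution gives $a=a'$, $n=n'$, $k=k'$.

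The remaining assertion (3), density, is the main obstacle, precisely because $\theta_\epsilon$ is not a Cartan involution and $\mathfrak a$ need not be maximal abelian in $\mathfrak p_\epsilon$, so the single cell $K_\epsilon AN^+$ is far from filling $G$. Having shown the $O_i$ are open and disjoint, I would finish by invoking the finiteness of the set of $K_\epsilon$-orbits on the compact manifold $G/B\cong K/M$ (Matsuki duality): the non-open orbits then form a finite union of lower-dimensional real-analytic submanifolds, so the union of the open orbits is dense. The delicate remaining point is to verify that the orbits through $\tilde w_1B,\dots,\tilde w_rB$ exhaust all the open $K_\epsilon$-orbits; I expect to obtain this by showing that an orbit $K_\epsilon gB$ is open exactly when $\mathfrak k_\epsilon+\mathrm{Ad}(g)\mathfrak b=\mathfrak g$ (with $\mathfrak b=Lie(B)$), that such a $g$ may be moved to some $\tilde w\in M^*$, and then applying the coset computation of the previous paragraph to reduce $\tilde w$ to one of the chosen $\tilde w_i$. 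This matching of open orbits with $W_\epsilon\backslash W$, together with the orbit-finiteness input, is where the real work lies.
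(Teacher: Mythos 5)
There is nothing in the paper to compare your argument against: Proposition~\ref{itd-01} is quoted as a known result of Oshima--Sekiguchi \cite[Proposition~1.10]{OS1980} and is never proved here; the paper only re-derives its conclusions for the single group $\mathrm{F}_{4(-20)}$ by explicit orbit computations (Main Theorem~\ref{itd-06}, via Lemmas~\ref{prl2-12}, \ref{prl2-13}, \ref{prl2-16} and \ref{ke-01}). So your proposal is a reconstruction of the general theorem, and it must be judged in that generality. Within it, your infinitesimal decomposition $\mathfrak{g}=\mathfrak{k}_\epsilon\oplus\mathfrak{a}\oplus\mathrm{Ad}(\tilde w_i)\mathfrak{n}^+$ is correct (the dimension count and the disjointness of $w_i\Sigma^+$ and $w_i\Sigma^-$ do exactly what you say), and part (2) follows from it together with (1).

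The genuine gap is the global involution $\Theta_\epsilon$, on which both your proof of (1) and your appeal to Matsuki's finiteness theorem in (3) depend. You assert that $\theta_\epsilon$ integrates to an involution of $G$ fixing $K_\epsilon$ pointwise ``by construction together with \cite[Lemma~1.4]{OS1980}'', but neither half of this is available for a general connected semisimple $G$ with finite center. First, $\theta_\epsilon$ lifts to the universal cover, but the lift need not descend to $G$, and when it does descend it is unique (two lifts differ by an automorphism with trivial differential), so there is no freedom to adjust it. Second, even when $\Theta_\epsilon$ exists, what comes for free is only that it fixes $(K_\epsilon)^0$ pointwise; for the remaining components one gets merely $\Theta_\epsilon(k)k^{-1}\in Z(G)$, because $\mathrm{Ad}(k)$ commutes with $\theta_\epsilon$ for $k\in K_\epsilon=(K_\epsilon)^0M$, and $k\mapsto\Theta_\epsilon(k)k^{-1}$ is a possibly nontrivial $Z(G)$-valued character of the component group of $K_\epsilon$ (recall $M$, hence $K_\epsilon$, is in general disconnected). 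This is precisely why Oshima--Sekiguchi define $K_\epsilon:=(K_\epsilon)^0M$ rather than as a fixed-point group. The gap can be closed for linear $G$: writing $\epsilon(\lambda)=e^{\pi i\lambda(H_\epsilon)}$ with $H_\epsilon\in\mathfrak{a}$, the map $\Theta_\epsilon:=\mathrm{Ad}(\exp(\pi iH_\epsilon))\circ\Theta$ (conjugation taken in $G_{\mathbb{C}}$) is an involution of $G$ with differential $\theta_\epsilon$, and it fixes $M$ pointwise because $M$ centralizes $\mathfrak{a}$; in the present paper this role is played by Proposition~\ref{itd-02}, which exhibits $\theta_\epsilon$ as the inner involution $\tilde{\sigma}_2$ of $\mathrm{F}_{4(-20)}$. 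But then you have proved the statement only for such groups, not in the generality claimed. Finally, part (3) is not a proof as written: you yourself flag the key claim---that the orbits through $\tilde w_1B,\dots,\tilde w_rB$ exhaust the open $K_\epsilon$-orbits on $G/B$---as ``where the real work lies'' and leave it open, and the finiteness of the $K_\epsilon$-orbit set that you import from \cite{Mt1979} presupposes the same group-level involution. So (2) and the Lie-algebra lemma stand, while (1) and (3) each contain a hole that must be filled before the argument is complete.
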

The decomposition
$G=\overline{\cup_{i=1}^rK_{\epsilon}\tilde{w}_iAN^+}$
is called the $K_{\epsilon}-${\it Iwasawa decomposition of}
$G$.
\medskip

If a group $G$ acts on a set $S$,
we denote the pointwise stabilizer of finite set 
$\{x_1,\cdots,x_n\}$ of $S$
by $G_{x_1,\cdots,x_n}
:=\{g \in G|~g x_i=x_i~\text{for}~i=1,\cdots,n\}$,
and the $G$-orbit of $x\in S$ by
$G \cdot x:=\{g x|~g\in G\}$.
We denote the Kronecker delta by $\delta_{i,j}$.
Let ${\bf O}$ be the octonions
having the conjugation $\overline{x}$ and
inner product $(x|y)$ for $x,y \in {\bf O}$.
We denote the natural unit octonions:
$\{1(=e_0),e_1,e_2,e_3,e_4,e_5,e_6,e_7\}$.
Set
\[
h^1(\xi_1,\xi_2,\xi_3;x_1,x_2,x_3):=
\begin{pmatrix}
\xi_1 & \sqrt{-}x_3 & \sqrt{-1}\overline{x_2}\\
\sqrt{-1}\overline{x_3} & \xi_2 & x_1\\
\sqrt{-1}x_2 & \overline{x_1} & \xi_3
\end{pmatrix}
\]
with $\xi_i \in \mathbb{R}$, $x_i \in {\bf O}$.
In \cite[\S 1]{Na2012},
the {\it exceptional Jordan algebra} $\mathcal{J}^1$
is given by 
\[\mathcal{J}^1:=\{h^1(\xi_1,\xi_2,\xi_3;x_1,x_2,x_3)|
~\xi_i \in \mathbb{R},~x_i \in {\bf O}\}\]
with the {\it Jordan product} $X\circ Y=2^{-1}(XY+YX)$ 
for $X,Y \in\mathcal{J}^1$.
Put
$E=h^1(1,1,1;0,0,0)$,
$E_i:=h^1(\delta_{i,1},\delta_{i,2},\delta_{i,3};0,0,0)$,
and
$F_i^1(x):=h^1(0,0,0;
\delta_{i,1}x,\delta_{i,2}x,\delta_{i,3}x)$.
Then
$h^1(\xi_1,\xi_2,\xi_3;x_1,x_2,x_3)
=\sum{}_{i=1}^3(\xi_i E_i+F_i^1(x_i))$.
We recall that
$\mathcal{J}^1$ has the {\it trace}
$\mathrm{tr}(X):=\sum_{i=1}^3\xi_i$
where $X=\sum_{i=1}^3
(\xi_i E_i+F_i^1(x_i))$,
the {\it inner product} $(X|Y):=\mathrm{tr}(X\circ Y)$,
the {\it cross product} $X\times Y$ by
\[X\times Y:=2^{-1}\bigl(2X\circ Y
-\mathrm{tr}(X)Y-\mathrm{tr}(Y)X
+(\mathrm{tr}(X)\mathrm{tr}(Y)-(X|Y))E\bigr)\] 
as well as $X^{\times 2}:=X\times X$,
and the {\it determinant} $\mathrm{det}(X):=3^{-1}
(X|X^{\times 2})$, respectively.
By \cite[Lemma~1.6]{Na2012},
\begin{gather*}
\tag{\ref{itd}.1}\label{itd001}
(X|Y)=
(\sum{}_{i=1}^3\xi_i\eta_i)
+2(x_1|y_1)-2(x_2|y_2)-2(x_3|y_3),\\
\mathrm{det}(X)=\xi_1\xi_2\xi_3
-2(1|(x_1x_2)x_3)
-\xi _1(x_1|x_1)+\xi_2(x_2|x_2)+\xi_3(x_3|x_3),\\
\tag{\ref{itd}.2}\label{itd002}
X^{\times 2}=\bigl(\xi_2\xi_3-(x_1|x_1)\bigr)E_1
+\bigl(\xi_3\xi_1+(x_2|x_2)\bigr)E_2
+\bigl(\xi_1\xi_2+(x_3|x_3)\bigr)E_3\\
+F_1^1(-\overline{x_2x_3}-\xi_1 x_1)
+F_2^1(\overline{x_3x_1}-\xi_2 x_2)
+F_3^1(\overline{x_1x_2}-\xi_3 x_3)
\end{gather*}
where $X=\sum{}_{i=1}^3(\xi_i E_i+F_i^1(x_i))$
and $Y=\sum{}_{i=1}^3(\eta_i E_i+F_i^1(y_i))$.
We recall that $\mathcal{J}^1$
has the
{\it exceptional hyperbolic planes} $\mathcal{H}$,
$\mathcal{H'}$
and the {\it exceptional null cones} $\mathcal{N}_1^+$,
$\mathcal{N}_1^-$ as
\begin{align*}
\mathcal{H}&:=\{
X \in \mathcal{J}^1|~X^{\times 2}=0,~\mathrm{tr}(X)=1,
~(E_1|X)\geq 1\},\\
\mathcal{H'}&:=\{
X \in \mathcal{J}^1|~X^{\times 2}=0,~\mathrm{tr}(X)=1,
~(E_1|X)\leq 0\},\\
\mathcal{N}_1^+&:=\{
X \in \mathcal{J}^1|~X^{\times 2}=0,~\mathrm{tr}(X)=0,
~(E_1|X)> 0\},\\
\mathcal{N}_1^-&:=\{
X \in \mathcal{J}^1|~X^{\times 2}=0,~\mathrm{tr}(X)=0,
~(E_1|X)< 0\}.
\end{align*}
respectively.
In Lemma~\ref{prl2-17}, we will show the following equations:
\[
\tag{\ref{itd}.3}\label{itd003}
\left\{
\begin{array}{rl}
{\rm (i)}&
\{X \in \mathcal{H}|~(E_1|X)=1\}=\{E_1\},\\
{\rm (ii)}&
\{X \in \mathcal{H}'|~(E_1|X)=0\}
=2^{-1}( S^8+(E_2+E_3))
\supset
\{E_2,E_3\}
\end{array}
\right.
\]
where $S^8=\{\xi(E_2-E_3)+F_1^1(x)|~\xi^2+(x|x)=1\}$.

The {\it exceptional Lie group}
$\mathrm{F}_{4(-20)}$ is given by
\[
\mathrm{F}_{4(-20)}:=\{g \in \mathrm{GL}_{\mathbb{R}}
(\mathcal{J}^1)|~g(X \circ Y)=g X \circ g Y\}\]
which satisfies that
\begin{gather*}
\mathrm{tr}(g X)=\mathrm{tr}(X),
\quad g E=E,\quad
(g X|g Y)=(X|Y),\\
g(X \times Y)=g X \times g Y,
\quad \mathrm{det}(g X)=\mathrm{det}(X)
\end{gather*}
for all $g \in \mathrm{F}_{4(-20)}$
and $X,Y \in \mathcal{J}^1$,
from \cite[Proposition~1.8]{Na2012}.
In \cite[Theorem~2.2.2]{Yi1990} and
\cite[Theorem~2.14.1]{Yi_arxiv},
I. Yokota has proved that
$\mathrm{F}_{4(-20)}$ is connected and 
a simply connected semisimple Lie group 
of type ${\bf F}_{4(-20)}$,
by showing the polar decomposition
$\mathrm{F}_{4(-20)}\simeq
\mathrm{Spin}(9)\times \mathbb{R}^{16}$
with the center $Z(\mathrm{F}_{4(-20)})=\{1\}$
(\cite[Theorem~2.14.2]{Yi1990}).
We denote the elements $P^+,P^-\in \mathcal{J}^1$ by
$P^+:=h^1(1,-1,0;0,0,1)$
and 
$P^-:=h^1(-1,1,0;0,0,1)$
respectively.
From \cite[Proposition~0.1]{Na2012},
we recall that
the exceptional hyperbolic planes
and the exceptional null cones
are $\mathrm{F}_{4(-20)}$-orbits
in $\mathcal{J}^1$:
\begin{align*}
\tag{\ref{itd}.4}\label{itd004}
\mathcal{H}&=\mathrm{F}_{4(-20)} \cdot E_1,\\
\tag{\ref{itd}.5}\label{itd005}
\mathcal{H'}&=\mathrm{F}_{4(-20)} \cdot E_2
=\mathrm{F}_{4(-20)} \cdot E_3,\\
\tag{\ref{itd}.6}\label{itd006}
\mathcal{N}_1^+&=\mathrm{F}_{4(-20)} \cdot P^+,\\
\tag{\ref{itd}.7}\label{itd007}
\mathcal{N}_1^-&=\mathrm{F}_{4(-20)} \cdot P^-.
\end{align*}
For $i \in \{1,2,3\}$,
we denote the element $\sigma_i \in \mathrm{F}_{4(-20)}$ by
\[\sigma_i \left(\sum{}_{j=1}^3(\xi_j E_j+F_j^1(x_j))\right)
:=\sum{}_{j=1}^3\left(\xi_j E_j+
F_j^1\bigl((-1)^{1-\delta_{i,j}}x_j\bigr)\right).
\]
(see \cite[\S 4]{Na2012}), and 
the involutive inner automorphism 
$\tilde{\sigma}_i$;
$\tilde{\sigma}_i(g):=\sigma_i g \sigma_i^{-1}
=\sigma_i g \sigma_i$
for $g \in \mathrm{F}_{4(-20)}$.
We simply write  $\sigma$ and $\tilde{\sigma}$
for $\sigma_1$ and $\tilde{\sigma}_1$,
respectively.
Set $(G,\Theta)=(\mathrm{F}_{4(-20)},\tilde{\sigma})$
and $K:=(\mathrm{F}_{4(-20)})^{\tilde{\sigma}}$.
From \cite[Proposition~4.8]{Na2012}
(note $(\mathrm{F}_{4(-20)})_{E_2}\cong
(\mathrm{F}_{4(-20)})_{E_3}$),
the stabilizers
$(\mathrm{F}_{4(-20)})_{E_1}$
and $(\mathrm{F}_{4(-20)})_{E_2}$
are connected two-hold covering groups of
$\mathrm{SO}(9)$ and $\mathrm{SO}^0(8,1)$, respectively.
So we denote $\mathrm{Spin}(9):=(\mathrm{F}_{4(-20)})_{E_1}$
and $\mathrm{Spin}^0(8,1):=(\mathrm{F}_{4(-20)})_{E_2}$, respectively.
By \cite[Proposition~4.14]{Na2012},
\begin{gather*}
\tag{\ref{itd}.8}\label{itd008}
K=(\mathrm{F}_{4(-20)})_{E_1}=\mathrm{Spin}(9).\\
\tag{\ref{itd}.9}\label{itd009}
(\mathrm{F}_{4(-20)})^{\tilde{\sigma}_2}
=(\mathrm{F}_{4(-20)})_{E_2}=\mathrm{Spin}^0(8,1).
\end{gather*}
Then
\[\mathcal{H} \simeq
\mathrm{F}_{4(-20)}/\mathrm{Spin}(9),\quad
\mathcal{H'} \simeq
\mathrm{F}_{4(-20)}/\mathrm{Spin}^0(8,1).\] 
We denote
$\mathrm{D}_4:=(\mathrm{F}_{4(-20)})_{E_1,E_2,E_3}(\subset K)$.
From \cite[Lemma~3.2(1) and
Proposition~2.6(1)]{Na2012},
$\mathrm{D}_4$
is a connected two-hold covering group of $\mathrm{SO}(8)$,
and set $\mathrm{Spin}(8):=\mathrm{D}_4$.
We denote the Lie algebras  
$\mathfrak{f}_{4(-20)}:=Lie(\mathrm{F}_{4(-20)})$
and
$\mathfrak{d}_4:=Lie(\mathrm{D}_4)
=\{D \in \mathfrak{f}_{4(-20)}|
~D E_i=0,~i=1,2,3\}$, respectively.
From \cite[Lemma~3.9]{Na2012},
$\mathfrak{f}_{4(-20)}$
has the decomposition
\[\mathfrak{f}_{4(-20)}=
\mathfrak{d}_4\oplus \tilde{\mathfrak{u}}_1^1
\oplus \tilde{\mathfrak{u}}_2^1
\oplus \tilde{\mathfrak{u}}_3^1\quad
\text{where}
~\tilde{\mathfrak{u}}_i^1:=\{
\tilde{A}_i^1(a)|~a\in {\bf O}\}\]
(see  \cite[\S 3]{Na2012}).
The differential
$d\tilde{\sigma}$ of $\tilde{\sigma}$
at the identity
is often denoted by $\tilde{\sigma}$.
From \cite[Lemma~7.2(2)]{Na2012}, 
$d\tilde{\sigma}$
is a Cartan involution
with a Cartan decomposition
$\mathfrak{f}_{4(-20)}=\mathfrak{k}\oplus \mathfrak{p}$.
We denote $a_t:=\exp(t\tilde{A}_3^1(1))$
with $t \in \mathbb{R}$, 
the one-parameter subgroup
$A:=\{a_t|~t\in\mathbb{R}\}$,
the Lie algebra
$\mathfrak{a}:=\{t\tilde{A}_3^1(1)|~t\in\mathbb{R}\}$
of $A$,
and the linear functional $\alpha$
on $\mathfrak{a}$
such that $\alpha(\tilde{A}_3^1(1))=1$.
Set
the centralizer $M:=\{k\in K|
~k\tilde{A}_3^1(1)k^{-1}=\tilde{A}_3^1(1)\}$ 
of $\mathfrak{a}$ 
of $K$,
and its Lie subalgebra 
$\mathfrak{m}:
=\{\phi \in\mathfrak{k}|
~[\phi,\tilde{A}_3^1(1)]=0\}$.
Then
\[\tag{\ref{itd}.10}\label{itd010}
ma=am\quad \text{for~all}~m \in M~\text{and}~
a \in A.\]
From \cite[Lemma~3.2(2) and
Proposition~2.6(2)]{Na2012},
$(\mathrm{F}_{4(-20)})_{E_1,E_2,E_3,F_3^1(1)}$
is a connected two-hold covering group of $\mathrm{SO}(7)$,
and set $\mathrm{Spin}(7)
:=(\mathrm{F}_{4(-20)})_{E_1,E_2,E_3,F_3^1(1)}$.
By \cite[Proposition~7.4]{Na2012},
\[
\tag{\ref{itd}.11}\label{itd011}
M=\mathrm{Spin}(7)=
(\mathrm{F}_{4(-20)})_{E_1,E_2,E_3,F_3^1(1)}=
(\mathrm{F}_{4(-20)})_{E_j,F_3^1(1)}
\]
with $j\in\{1,2\}$.
In particular, $M$ is connected.
From \cite[Lemma~7.5]{Na2012}, 
$\mathfrak{a}$ is a maximal abelian subspace 
of $\mathfrak{p}$ with 
the following root space decomposition
of $(\mathfrak{f}_{4(-20)},\mathfrak{a})$:
\[
\tag{\ref{itd}.12}\label{itd012}
\mathfrak{f}_{4(-20)}=\mathfrak{g}_{-2\alpha}\oplus
\mathfrak{g}_{-\alpha}\oplus
\mathfrak{a}\oplus \mathfrak{m}\oplus
\mathfrak{g}_{\alpha}\oplus \mathfrak{g}_{2\alpha},
\]
the set of roots 
$\Sigma=\{\pm\alpha,\pm2\alpha\}$,
and
$\mathfrak{n}^\pm
=\mathfrak{g}_{\pm\alpha}\oplus
 \mathfrak{g}_{\pm2\alpha}~(resp)$.
Then 
$\mathfrak{g}_{\alpha}$ 
$(resp.~\mathfrak{g}_{-\alpha})$ is parameterized by 
the octonions ${\bf O}$:
\[
\tag{\ref{itd}.13}\label{itd013}
\mathfrak{g}_{\alpha}=\{\mathcal{G}_1(x)|~x\in{\bf O}\}
\quad (resp.~
\mathfrak{g}_{-\alpha}=\{\mathcal{G}_{-1}(x)|~x\in{\bf O}\})
\]
where $\mathcal{G}_{\pm 1}(x):=
\tilde{A}_1^1(x)+\tilde{A}_2^1(\mp \overline{x})
~(resp)$
and $\mathfrak{g}_{2\alpha}$ 
$(resp.~\mathfrak{g}_{-2\alpha})$
is parameterized by 
the vector parts $\mathrm{Im}{\bf O}:=\{\sum_{i=1}^7r_i e_i|~r_i \in
\mathbb{R}\}$ of octonions:
\[
\tag{\ref{itd}.14}\label{itd014}
\mathfrak{g}_{2\alpha}=\{\mathcal{G}_2(p)|
~p\in\mathrm{Im}{\bf O}\}
\quad (resp.~
\mathfrak{g}_{-2\alpha}=\{\mathcal{G}_{-2}(p)|
~p\in\mathrm{Im}{\bf O}\})
\]
where $\mathcal{G}_{\pm 2}(p):=
\tilde{A}_3^1(\mp p)-\delta(p)
~(resp)$
and $\delta(p)
 \in \mathfrak{m}\subset
\mathfrak{d}_4$ (see \cite[\S 7]{Na2012}).
Set
$N^\pm:=\exp \mathfrak{n}^\pm
=\{\exp(\mathcal{G}_{\pm 1}(x)+\mathcal{G}_{\pm 2}(p))|~
x\in{\bf O},~p\in\mathrm{Im}{\bf O}\}~(resp)$.
Because of 
$[\mathcal{G}_{\pm 1}(x), \mathcal{G}_{\pm 2}(p)]=0
~(resp)$,
\begin{align*}
\tag{\ref{itd}.15}\label{itd015}
\exp\mathcal{G}_{\pm 2}(p)\exp\mathcal{G}_{\pm 1}(x)
&=
\exp(\mathcal{G}_{\pm 1}(x)+\mathcal{G}_{\pm 2}(p))\\
&=\exp\mathcal{G}_{\pm 1}(x)\exp\mathcal{G}_{\pm 2}(p)
\quad (resp).
\end{align*}
By \cite[Lemma~7.1]{Na2012},
for any $D \in \mathfrak{d}_4$ and $a \in {\bf O}$,
\[
\tag{\ref{itd}.16}\label{itd016}
\left
\{\begin{array}{rlll}
{\rm (i)}&d\tilde{\sigma}_iD=D,&
{\rm (ii)}&d\tilde{\sigma}_i\tilde{A}_i^1(a)
=\tilde{A}_i^1(a),\\
\smallskip
{\rm (iii)}&
d\tilde{\sigma}_i\tilde{A}_j^1(a)=-\tilde{A}_j^1(a)
&\multicolumn{2}{l}{{\rm for}~j=i+1,i+2}
\end{array}\right.\]
where indexes $i,i+1,i+2,j$ are counted
modulo $3$.
Then we get
\begin{gather*}
\tag{\ref{itd}.17}\label{itd017}
d\tilde{\sigma}\mathcal{G}_{\pm 1}(x)=\mathcal{G}_{\mp 1}(x),
\quad
d\tilde{\sigma}\mathcal{G}_{\pm 2}(p)=\mathcal{G}_{\mp 2}(p)
\quad (resp),\\
\tag{\ref{itd}.18}\label{itd018}
\tilde{\sigma}\exp(\mathcal{G}_{\pm 1}(x)+\mathcal{G}_{\pm 2}(p))
=\exp(\mathcal{G}_{\mp 1}(x)+\mathcal{G}_{\mp 2}(p))
\quad (resp)
\end{gather*}
with $x\in{\bf O}$ and $p \in \mathrm{Im}{\bf O}$.
Especially, 
$\tilde{\sigma}(N^{\pm})=N^{\mp}~(resp)$.
By \cite[Corollary~8.9]{Na2012},
\[\tag{\ref{itd}.19}\label{itd019}
(\mathrm{F}_{4(-20)})_{P^-}=N^+M=MN^+.
\]
Then from (\ref{itd007}), 
\[\mathcal{N}_1^-
\simeq \mathrm{F}_{4(-20)}/MN^+.\]
Fix the Cartan involution $\theta:=d\tilde{\sigma}$ 
and set
$\epsilon(\alpha)=\epsilon(-\alpha):=-1$ and
$\epsilon(2\alpha)=\epsilon(-2\alpha):=1$ on $\Sigma$.
Then $\epsilon$ satisfies conditions (i) and (ii)
of the signature of roots,
and we consider the involutive automorphism $\theta_{\epsilon}$.
We use same notations
$\mathfrak{k}_{\epsilon},$
$(K_{\epsilon})^0,$
$K_{\epsilon},$
$M^*,$ $M_{\epsilon}^*,$
$W$, and $W_{\epsilon}$ corresponding to
notations of given for
general $G$, respectively.
\medskip

\begin{proposition}\label{itd-02}
{\rm (1)}
$\theta_{\epsilon}=d\tilde{\sigma}_2$
on $\mathfrak{f}_{4(-20)}$.
\smallskip

\noindent
{\rm (2)}
$\theta_{\epsilon}$
can be lifted on the group $\mathrm{F}_{4(-20)}$
as $\tilde{\sigma}_2$ and 
\[
\tag{\ref{itd}.20}\label{itd020}
K_{\epsilon}
=(\mathrm{F}_{4(-20)})^{\tilde{\sigma}_2}
=(\mathrm{F}_{4(-20)})_{E_2}
=\mathrm{Spin}^0(8,1).\]
\end{proposition}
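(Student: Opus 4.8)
The plan is to establish (1) by a direct verification on each summand of the root-space decomposition (\ref{itd012}), and then to derive (2) formally from (1) together with the connectedness statement (\ref{itd009}).

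For (1), since $\theta_\epsilon$ and $d\tilde\sigma_2$ are both linear, I would check that they agree on $\mathfrak{a}$, on $\mathfrak{m}$, and on each of $\mathfrak{g}_{\pm\alpha}$, $\mathfrak{g}_{\pm 2\alpha}$. On $\mathfrak{a}\oplus\mathfrak{m}$ the definition of $\theta_\epsilon$ gives $\theta_\epsilon=\theta=d\tilde\sigma_1$, which is $-\mathrm{id}$ on $\mathfrak{a}\subset\mathfrak{p}$ and $\mathrm{id}$ on $\mathfrak{m}\subset\mathfrak{k}\cap\mathfrak{d}_4$; reading off (\ref{itd016}) with $i=2$ (so that the indices sent to $-1$ are $j\in\{3,1\}$ modulo $3$) shows $d\tilde\sigma_2$ does the same, since $\tilde{A}_3^1(1)$ spans $\mathfrak{a}$ and $\mathfrak{m}\subset\mathfrak{d}_4$ is fixed pointwise. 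On the root spaces I would use the parameterizations (\ref{itd013})--(\ref{itd014}) together with the explicit forms $\mathcal{G}_{\pm 1}(x)=\tilde{A}_1^1(x)+\tilde{A}_2^1(\mp\overline{x})$ and $\mathcal{G}_{\pm 2}(p)=\tilde{A}_3^1(\mp p)-\delta(p)$ with $\delta(p)\in\mathfrak{d}_4$. On $\mathfrak{g}_\lambda$ one has $\theta_\epsilon=\epsilon(\lambda)\,\theta=\epsilon(\lambda)\,d\tilde\sigma_1$, so (\ref{itd017}) yields $\theta_\epsilon\mathcal{G}_{\pm 1}(x)=-\mathcal{G}_{\mp 1}(x)$ and $\theta_\epsilon\mathcal{G}_{\pm 2}(p)=\mathcal{G}_{\mp 2}(p)$, the minus sign reflecting $\epsilon(\pm\alpha)=-1$ against $\epsilon(\pm 2\alpha)=+1$. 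On the other side, (\ref{itd016}) with $i=2$ gives $d\tilde\sigma_2\tilde{A}_1^1(a)=-\tilde{A}_1^1(a)$, $d\tilde\sigma_2\tilde{A}_2^1(a)=\tilde{A}_2^1(a)$, $d\tilde\sigma_2\tilde{A}_3^1(a)=-\tilde{A}_3^1(a)$, and $d\tilde\sigma_2\delta(p)=\delta(p)$; substituting these into the explicit forms of $\mathcal{G}_{\pm 1}$ and $\mathcal{G}_{\pm 2}$ should reproduce the same values, so $\theta_\epsilon$ and $d\tilde\sigma_2$ agree on every summand.

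For (2), I would first observe that $\tilde\sigma_2$ is already an inner automorphism of $\mathrm{F}_{4(-20)}$ whose differential at the identity is $\theta_\epsilon$ by (1); hence $\tilde\sigma_2$ serves as the desired lift of $\theta_\epsilon$. It follows that the closed subgroup $(\mathrm{F}_{4(-20)})^{\tilde\sigma_2}$ has Lie algebra $\{X\mid d\tilde\sigma_2 X=X\}=\{X\mid\theta_\epsilon X=X\}=\mathfrak{k}_\epsilon$, so its identity component is the connected subgroup $(K_\epsilon)^0$ attached to $\mathfrak{k}_\epsilon$. By (\ref{itd009}) the group $(\mathrm{F}_{4(-20)})^{\tilde\sigma_2}=(\mathrm{F}_{4(-20)})_{E_2}=\mathrm{Spin}^0(8,1)$ is connected, so $(K_\epsilon)^0=(\mathrm{F}_{4(-20)})^{\tilde\sigma_2}$. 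Finally, by (\ref{itd011}) every element of $M=(\mathrm{F}_{4(-20)})_{E_1,E_2,E_3,F_3^1(1)}$ fixes $E_2$, so $M\subset(\mathrm{F}_{4(-20)})_{E_2}=(K_\epsilon)^0$; hence $K_\epsilon=(K_\epsilon)^0 M=(K_\epsilon)^0=(\mathrm{F}_{4(-20)})_{E_2}=\mathrm{Spin}^0(8,1)$.

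The only laborious point is the sign-bookkeeping in (1): one must be careful with the modular index convention of (\ref{itd016}) at $i=2$ and with the conjugation in $\tilde{A}_2^1(\mp\overline{x})$, so as to confirm that the extra minus sign that $d\tilde\sigma_2$ puts on the $\tilde{A}_1^1$-component is exactly the factor $\epsilon(\pm\alpha)=-1$ built into $\theta_\epsilon$. Once (1) is verified, part (2) is purely formal, resting on the already-established connectedness of $\mathrm{Spin}^0(8,1)$ and the containment $M\subset(\mathrm{F}_{4(-20)})_{E_2}$.
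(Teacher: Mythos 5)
Your proposal is correct and follows essentially the same route as the paper: part (1) is verified summand-by-summand on the decomposition (\ref{itd012}) using (\ref{itd016}) and (\ref{itd017}) (the paper works directly with $\mathcal{G}_{\pm 1}$, $\mathcal{G}_{\pm 2}$ rather than unpacking them into $\tilde{A}_i^1$-components, but the sign bookkeeping is identical), and part (2) is deduced exactly as you do, from the connectedness of $(\mathrm{F}_{4(-20)})^{\tilde{\sigma}_2}=(\mathrm{F}_{4(-20)})_{E_2}$ in (\ref{itd009}) together with $M\subset(\mathrm{F}_{4(-20)})_{E_2}$ from (\ref{itd011}), giving $K_{\epsilon}=(K_{\epsilon})^0M=(\mathrm{F}_{4(-20)})_{E_2}$.
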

\begin{proof}
Since $M\subset 
\mathrm{D}_4$ by (\ref{itd011}),
$\mathfrak{m}\subset \mathfrak{d}_4$.
Let $t \in \mathbb{R}$,
$D \in \mathfrak{m}$,
$x \in {\bf O}$, and $p \in \mathrm{Im}{\bf O}$.
Then using
(\ref{itd016}),  (\ref{itd017}) and
the definition of $\epsilon$,
\begin{gather*}
d\tilde{\sigma}_2(t\tilde{A}_3^1(1)+D)
=-t\tilde{A}_3^1(1)+D
=\theta(t\tilde{A}_3^1(1)+D)
=\theta_{\epsilon}(t\tilde{A}_3^1(1)+D),\\
d\tilde{\sigma}_2\mathcal{G}_{\pm 1}(x)
=-\mathcal{G}_{\mp 1}(x)
=\epsilon(\pm \alpha)\theta\mathcal{G}_{\pm 1}(x)
=\theta_{\epsilon}\mathcal{G}_{\pm 1}(x),\\
d\tilde{\sigma}_2\mathcal{G}_{\pm 2}(p)
=\mathcal{G}_{\mp 2}(p)
=\epsilon(\pm 2\alpha)\theta
\mathcal{G}_{\pm 2}(p)
=\theta_{\epsilon}
\mathcal{G}_{\pm 2}(p).
\end{gather*}
Thus it follows
from
(\ref{itd012}), (\ref{itd013}),
and (\ref{itd014})
that $d\tilde{\sigma}_2=\theta_{\epsilon}$
on $\mathfrak{f}_{4(-20)}$.
Then
$\theta_{\epsilon}$
can be lifted on $\mathrm{F}_{4(-20)}$ 
as $\tilde{\sigma}_2$.
From (\ref{itd009}),  we see
$(K_{\epsilon})^0=\mathrm{Spin}^0(8,1)
=(\mathrm{F}_{4(-20)})^{\tilde{\sigma}_2}=(\mathrm{F}_{4(-20)})_{E_2}$,
and 
$M
\subset (\mathrm{F}_{4(-20)})_{E_2}$
by (\ref{itd011}).
Therefore 
$K_{\epsilon}=(K_{\epsilon})^0M
=(\mathrm{F}_{4(-20)})_{E_2}$.
\end{proof}
\medskip

\begin{proposition}\label{itd-03}
{\rm (1)} 
$M^*=M\coprod \sigma M$.
Especially, 
\[
W=\{M,\sigma M\}
\cong \{1,\sigma\}
\cong \mathbb{Z}_2.\]
{\rm (2)}
$M_{\epsilon}^*=M^*=M\coprod \sigma M$.
Especially, 
\[
W_{\epsilon}=\{M,\sigma M\}\cong\{1,\sigma\}
\cong\mathbb{Z}_2,
\quad [W:W_{\epsilon}]=1.\]
\end{proposition}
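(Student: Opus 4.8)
The plan is to exploit that $\mathfrak{a}=\mathbb{R}\,\tilde{A}_3^1(1)$ is one-dimensional, which forces the Weyl group $W=M^*/M$ to have order at most $2$, and then to exhibit $\sigma=\sigma_1$ as a representative of the nontrivial class; part (2) then reduces to checking that this single extra element lies in $K_{\epsilon}$.

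First I would set up the homomorphism that controls $W$. Since $\mathfrak{a}$ is one-dimensional, every $k\in M^*=N_K(\mathfrak{a})$ satisfies $\mathrm{Ad}(k)\tilde{A}_3^1(1)=\lambda(k)\,\tilde{A}_3^1(1)$ for a unique scalar $\lambda(k)\in\mathbb{R}^{\times}$, and $k\mapsto\lambda(k)$ is a continuous homomorphism $M^*\to\mathbb{R}^{\times}$ whose kernel is exactly $M=Z_K(\mathfrak{a})$. Because $K$ is compact, the powers $\lambda(k)^n=\lambda(k^n)$ stay bounded as $n\to\pm\infty$, forcing $|\lambda(k)|=1$ (equivalently, $\mathrm{Ad}(k)$ is an isometry of the Euclidean line $\mathfrak{a}$ for any $\mathrm{Ad}(K)$-invariant inner product on $\mathfrak{f}_{4(-20)}$). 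Hence $\lambda(k)\in\{\pm1\}$, which yields an injection $W=M^*/M\hookrightarrow\{\pm1\}$ and so $|W|\le2$.

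Next I would show that $\sigma$ realizes the value $-1$. By (\ref{itd008}) we have $K=(\mathrm{F}_{4(-20)})_{E_1}$, and since $\sigma=\sigma_1$ fixes every $E_j$ (it only rescales the off-diagonal octonion entries) we get $\sigma\in K$. Applying (\ref{itd016})(iii) with $i=1$ gives $\mathrm{Ad}(\sigma)\tilde{A}_3^1(1)=d\tilde{\sigma}_1\tilde{A}_3^1(1)=-\tilde{A}_3^1(1)$, so $\sigma\in M^*$ and $\lambda(\sigma)=-1$. Thus $\sigma M\ne M$, the injection above is onto, and therefore $W=\{M,\sigma M\}\cong\{1,\sigma\}\cong\mathbb{Z}_2$ with $M^*=M\coprod\sigma M$, proving (1).

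For (2) it suffices to check $M^*\subseteq K_{\epsilon}$, for then $M_{\epsilon}^*=K_{\epsilon}\cap M^*=M^*$, whence $W_{\epsilon}=W$ and $[W:W_{\epsilon}]=1$. By (\ref{itd020}) we have $K_{\epsilon}=(\mathrm{F}_{4(-20)})_{E_2}$; the centralizer $M$ fixes $E_2$ by (\ref{itd011}), so $M\subseteq K_{\epsilon}$, and $\sigma=\sigma_1$ fixes $E_2$, so $\sigma\in K_{\epsilon}$ and hence $\sigma M\subseteq K_{\epsilon}$. Therefore $M^*=M\coprod\sigma M\subseteq K_{\epsilon}$, giving $M_{\epsilon}^*=M^*$ and the remaining assertions. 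The only step demanding genuine care is the rank-one bound $|W|\le2$ of the second paragraph; once that is secured, the rest is direct bookkeeping with the cited orbit–stabilizer identities, with $\sigma_1$ serving simultaneously as the nontrivial Weyl representative and as an element of both $K$ and $K_{\epsilon}$.
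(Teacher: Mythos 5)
Your proposal is correct and takes essentially the same approach as the paper: the one-dimensionality of $\mathfrak{a}$ reduces everything to the scalar by which $k\in M^*$ acts on $\tilde{A}_3^1(1)$, that scalar is forced to be $\pm 1$ by an invariance argument, $\sigma$ is exhibited as the nontrivial representative via (\ref{itd008}) and (\ref{itd016}), and part (2) follows from $\sigma E_2=E_2$ together with (\ref{itd020}). The only difference is cosmetic: the paper pins down the scalar using the $\mathrm{Ad}(K)$-invariant negative definite form $B_{\tilde{\sigma}}(\phi,\phi')=B(\phi,\tilde{\sigma}\phi')$, whereas you invoke compactness of $K$ (boundedness of powers, equivalently an invariant inner product), which is the same idea in different clothing.
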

\begin{proof}
(1) Fix $k \in M^*$.
Then $k\tilde{A}_3^1(1)k^{-1}=\tilde{A}_3^1(t)$
for some $t \in \mathbb{R}$.
We set $B$ as the Killing form  of
$\mathfrak{f}_{4(-20)}$,
and a negative definite inner product
$B_{\tilde{\sigma}}(\phi,\phi')
:=B(\phi,\tilde{\sigma}\phi')$ for $\phi,\phi'\in 
\mathfrak{f}_{4(-20)}$.
Then
$B_{\tilde{\sigma}}
(\tilde{A}_3^1(1),\tilde{A}_3^1(1))
=B_{\tilde{\sigma}}(k\tilde{A}_3^1(1)k^{-1},
k\tilde{A}_3^1(1)k^{-1})
=t^2 B_{\tilde{\sigma}}
(\tilde{A}_3^1(1),\tilde{A}_3^1(1))$.
Thus $t=\pm 1$,
so that $M^*=
\{k \in K|~k\tilde{A}_3^1(1)k^{-1}=\tilde{A}_3^1(\pm 1)\}$.
Put $L=\{k \in K|~k\tilde{A}_3^1(1)k^{-1}=\tilde{A}_3^1(-1)\}$.
Then $M^*=M\coprod L$.
Now,
$\sigma \in (\mathrm{F}_{4(-20)})_{E_1}=K$
by (\ref{itd008}),
and  
$\sigma \tilde{A}_3^1(1)
\sigma^{-1}=\tilde{\sigma}\tilde{A}_3^1(1)
=\tilde{A}_3^1(-1)$
by (\ref{itd016}). 
Therefor $\sigma \in M^*$, and
since  
$\sigma k \in M$ for all $k \in L$,
we get
$L
=\sigma M$.
Hence (1) follows.

(2)
Because of $\sigma E_2=E_2$
and (\ref{itd020}), we see
$\sigma\in (\mathrm{F}_{4(-20)})_{E_2}=K_{\epsilon}$.
Then 
$\sigma\in K_{\epsilon}\cap M^*=M_{\epsilon}^*$.
Therefore, because
$M$ is a subgroup of $M_{\epsilon}^*$ and (1),
$M^*=M\coprod \sigma M\subset M_{\epsilon}^*
\subset M^*$,
and so (2) follows.
\end{proof}
\medskip

From $[W:W_{\epsilon}]=1$ and Proposition~\ref{itd-01},
the submanifold $K_{\epsilon}AN^+$ is open dense
in $\mathrm{F}_{4(-20)}$,
and for any
$g \in K_{\epsilon}AN^+$,
there exist unique elements
$k_{\epsilon}(g) \in K_{\epsilon}$,
$H_{\epsilon}(g) \in \mathfrak{a}$, and
$n_{\epsilon}(g) \in N^+$
such that
\[g=k_{\epsilon}(g)\exp(H_{\epsilon}(g))n_{\epsilon}(g).\]
However, this fact
will be actually shown  in this article.

For $x \in {\bf O}$,
we denote 
$Q^+(x):=h^1(0,0,0;x,\overline{x},0)$
and $Q^-(x):=h^1(0,0,0;x,-\overline{x},0)$.
We will prove the following main-theorem
in \S 11.

\begin{main-theorem}\label{itd-04}
{\rm (The explicit Iwasawa decomposition 
of $\mathrm{F}_{4(-20)}$).}
For any $g\in \mathrm{F}_{4(-20)}$,
there exist unique $k(g) \in K$, $H(g) \in \mathfrak{a}$,
and $n_I(g) \in N^+$ such that
\[g=k (\exp H(g)) n_I(g)\]
where
\begin{align*}
\tag{i}
H(g)&=2^{-1}\log\bigl(-(gP^-|E_1)\bigr)
\tilde{A}_3^1(1)\in \mathfrak{a},\\
\tag{ii}
n_I(g)&=\exp\Biggl(\mathcal{G}_1\left(
2^{-1}\bigl(\sum{}_{i=0}^7 (g Q^+(e_i)|E_1)e_i
\bigr)
/(gP^-|E_1)\right)\\
&+\mathcal{G}_2\left(
-2^{-1}\bigl(\sum{}_{i=1}^7 (g F_3^1(e_i)|E_1)
e_i\bigr)/(gP^-|E_1)\right)
\Biggr)\in N^+,\\
\tag{iii}
k(g)&=g n_I(g)^{-1}\exp (-H(g))\in K.
\end{align*}
\end{main-theorem}
\medskip

We define the equivalence relation
$\sim$
on $\mathcal{N}_1^-$ by
\[X\sim Y\quad\quad 
\stackrel{{\rm def}}{\Leftrightarrow}
\quad\quad Y=rX\quad\text{for~some~}r>0\]
where 
$X,Y\in\mathcal{N}_1^-$. 
We denote the quotient set 
\[\mathcal{F}:=\mathcal{N}_1^-/\sim,\]
and the equivalence class of $X\in\mathcal{N}_1^-$
by $[X]$.
From (\ref{itd007}),
$\mathrm{F}_{4(-20)}$ acts on $\mathcal{F}$:
\[g[X]:=[gX]\quad\quad\text{for}~g
\in \mathrm{F}_{4(-20)}
~~\text{and}~~ X\in\mathcal{N}_1^-.\]
We will prove the following theorem
in \S 11.

\begin{theorem}\label{itd-05}
\begin{gather*}
\tag{1}
(\mathrm{F}_{4(-20)})_{[P^-]}=MAN^+.\\
\tag{2}
\mathrm{F}_{4(-20)}/MAN^+\simeq \mathcal{F}.\\
\tag{3}
\mathcal{F}=K \cdot [P^-].\\
\tag{4}
\mathcal{F} \simeq \mathrm{Spin}(9)/\mathrm{Spin}(7).
\end{gather*}
\end{theorem}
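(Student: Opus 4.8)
The plan is to derive everything from part (1), since (2)--(4) then follow formally from the orbit--stabilizer theorem together with the transitivity of the action and the Iwasawa decomposition $\mathrm{F}_{4(-20)}=KAN^+$. Transitivity is immediate from (\ref{itd007}): because $\mathcal{N}_1^-=\mathrm{F}_{4(-20)}\cdot P^-$, every $X\in\mathcal{N}_1^-$ has the form $gP^-$, whence $[X]=g[P^-]$ and $\mathcal{F}=\mathrm{F}_{4(-20)}\cdot[P^-]$. For part (1) I would first record the auxiliary fact that $A$ fixes $[P^-]$. A direct computation with the explicit operator $\tilde{A}_3^1(1)$ of \cite[\S~3]{Na2012} on the plane spanned by $E_1-E_2$ and $F_3^1(1)$ shows that $P^-=-(E_1-E_2)+F_3^1(1)$ is an eigenvector, giving
\[a_tP^-=e^{2t}P^-,\qquad t\in\mathbb{R};\]
in particular $-(a_tP^-|E_1)=e^{2t}>0$, consistent with formula (i) of Main Theorem~\ref{itd-04}. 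Since $M$ and $N^+$ fix $P^-$ by (\ref{itd019}) while $A$ scales $P^-$ by the positive factor $e^{2t}$, the subgroup $MAN^+$ stabilizes $[P^-]$, which is the inclusion $MAN^+\subseteq(\mathrm{F}_{4(-20)})_{[P^-]}$.

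For the reverse inclusion, suppose $gP^-=rP^-$ with $r>0$ and write the Iwasawa decomposition $g=k\,\exp(H)\,n$ with $k\in K$, $H=t\tilde{A}_3^1(1)\in\mathfrak{a}$, and $n\in N^+$. Because $n$ fixes $P^-$ and $\exp(H)P^-=e^{2t}P^-$, we get $gP^-=e^{2t}(kP^-)$, so $kP^-=(re^{-2t})P^-=:cP^-$ with $c>0$. Pairing with $E_1$ and using that $k\in K=(\mathrm{F}_{4(-20)})_{E_1}$ preserves the inner product and fixes $E_1$ gives $(kP^-|E_1)=(P^-|E_1)=-1$, whereas $(cP^-|E_1)=-c$; hence $c=1$ and $kP^-=P^-$. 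Thus $k\in K\cap(\mathrm{F}_{4(-20)})_{P^-}=K\cap MN^+$ by (\ref{itd019}); writing $k=mn'$ with $m\in M\subseteq K$ and $n'\in N^+$ and comparing with the trivial Iwasawa decomposition $k=k\,\exp(0)\,e$, uniqueness of the Iwasawa factors forces $n'=e$ and $k=m\in M$. Therefore $g=k\exp(H)n\in MAN^+$, completing part (1).

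Parts (2)--(4) are then formal. By orbit--stabilizer, transitivity and part (1) yield $\mathrm{F}_{4(-20)}/MAN^+\simeq\mathrm{F}_{4(-20)}\cdot[P^-]=\mathcal{F}$, which is (2). For (3), the Iwasawa decomposition gives $\mathcal{F}=\mathrm{F}_{4(-20)}\cdot[P^-]=KAN^+\cdot[P^-]=K\cdot[P^-]$, because $AN^+\subseteq MAN^+$ already fixes $[P^-]$. For (4), the stabilizer of $[P^-]$ in $K$ is $K\cap MAN^+$; the same Iwasawa-uniqueness argument as above shows that if $k\in K$ equals $man$ with $m\in M\subseteq K$, $a\in A$, $n\in N^+$, then $a=e$, $n=e$ and $k=m$, so $K\cap MAN^+=M=\mathrm{Spin}(7)$ by (\ref{itd011}). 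Hence $\mathcal{F}=K\cdot[P^-]\simeq K/M=\mathrm{Spin}(9)/\mathrm{Spin}(7)$ by (\ref{itd008}).

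The main obstacle is the single computational input $a_tP^-=e^{2t}P^-$, that is, verifying that $A$ acts on $P^-$ by a strictly positive scalar rather than moving it off its ray; everything else is a formal consequence of orbit--stabilizer and the uniqueness in the Iwasawa decomposition. One should also check that $\mathcal{N}_1^-/\!\sim$ carries the homogeneous-space smooth structure making the bijections in (2) and (4) diffeomorphisms, but this is routine once $\mathcal{N}_1^-$ is identified with the $\mathrm{F}_{4(-20)}$-orbit of $P^-$.
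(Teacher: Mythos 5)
Your proof is correct. For parts (1), (2), and (4) it is essentially the paper's own argument: the paper likewise proves (1) by writing $g=k a_t n$ with the Iwasawa decomposition of Main Theorem~\ref{itd-04}, evaluating $(gP^-|E_1)$ to pin down the scalar (Lemma~\ref{prl2-10}(1)), and concluding $k\in M$; and it likewise obtains (4) from $K\cap MAN^+=M$ via uniqueness of the Iwasawa factors (Lemma~\ref{prl2-13}(1)). Your preliminary computation $a_tP^-=e^{2t}P^-$ is exactly Lemma~\ref{prl2-06} (cf.\ Lemma~\ref{prl2-07}), and your identification of $K\cap(\mathrm{F}_{4(-20)})_{P^-}$ with $M$ via (\ref{itd019}) plus Iwasawa uniqueness replaces the paper's appeal to Lemma~\ref{prl2-11}, which proves $(\mathrm{F}_{4(-20)})_{P^-,E_1}=M$ by a cross-product computation; both routes are legitimate at that point in the development. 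The one genuinely different step is part (3): the paper proves transitivity of $K$ on $\mathcal{F}$ by a normal-form computation, using \cite[Lemma~5.2(4)]{Na2012} to bring $X\in\mathcal{N}_1^-$ to the shape $h^1(-\xi,\xi,0;0,0,x)$ by some $k_1\in K$ and then the $\mathrm{D}_4$-orbit formula (\ref{id001}) to reach $\xi P^-$, whereas you deduce (3) formally from what is already proved: $\mathcal{F}=\mathrm{F}_{4(-20)}\cdot[P^-]=KAN^+\cdot[P^-]=K\cdot[P^-]$, since $AN^+\subset MAN^+=(\mathrm{F}_{4(-20)})_{[P^-]}$ by part (1). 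This is sound --- Main Theorem~\ref{itd-04} is established before Theorem~\ref{itd-05}, and the paper itself already invokes $\mathrm{F}_{4(-20)}=KAN^+$ inside the proof of part (1) --- and it is more economical, needing no further input from \cite{Na2012}; what the paper's version buys in exchange is an explicit description of how $K=\mathrm{Spin}(9)$ moves an arbitrary null ray to $[P^-]$, in the same concrete style as the $K_{\epsilon}$-reductions of Lemma~\ref{md-04} used later.
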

\medskip

Since $r X \in \mathcal{N}_1^-$ 
for all $r>0$ and $X \in \mathcal{N}_1^-$,
$\mathcal{N}_1^-$ 
is a cone 
in $\mathcal{J}^1$. Setting
$-\mathcal{N}_1^+:=\{-X|~X \in 
\mathcal{N}_1^+\}$, we see that
$\mathcal{N}_1^-=-\mathcal{N}_1^+$
from the definitions of $\mathcal{N}_1^+$
and $\mathcal{N}_1^-$,
and that $\sigma P^-=-P^+$.
And noting that 
$\mathcal{F}=\mathcal{N}_1^-/\sim$,
(\ref{itd003}),
and 
$\mathcal{F}\simeq S^{15}$
(see
Proposition~\ref{id-02}),
we draw the following figure.

\includegraphics[width= 8 cm]{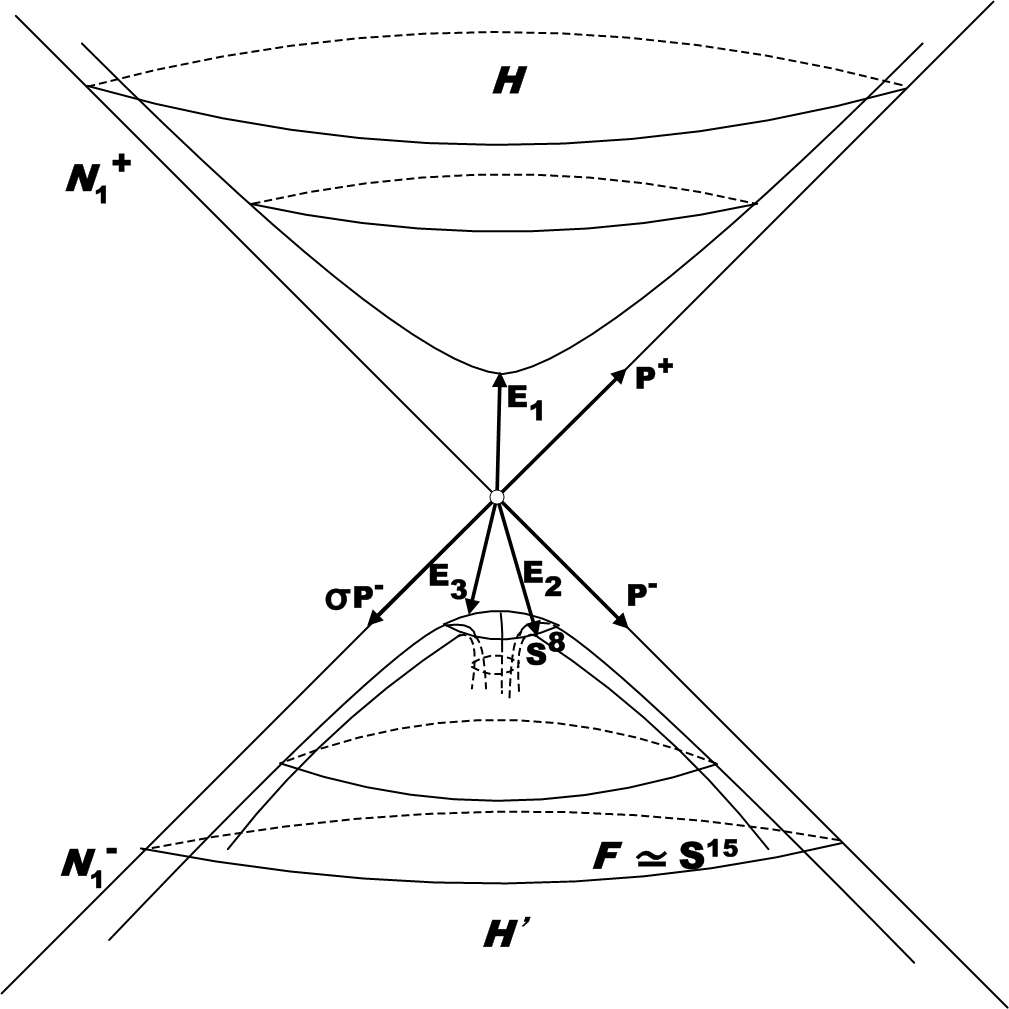}

We will prove the following main-theorem in
\S 12.

\begin{main-theorem}\label{itd-06}
{\rm (The explicit $K_{\epsilon}-$Iwasawa decomposition 
of $\mathrm{F}_{4(-20)}$)}.
\begin{align*}
K_{\epsilon}AN^+&=\{g\in \mathrm{F}_{4(-20)}|
~(gP^-|E_2)\ne 0\}\\
&=\{g\in \mathrm{F}_{4(-20)}|~(gP^-|E_2)> 0\}.
\end{align*}
Furthermore, the submanifold
$K_{\epsilon}AN^+$ is open dense
in $\mathrm{F}_{4(-20)}$.

For any $g\in K_{\epsilon}AN^+$,
there exist unique $k_{\epsilon}(g) \in K_{\epsilon}$, 
$H_{\epsilon}(g) \in \mathfrak{a}$,
and $n_{\epsilon}(g) \in N^+$ such that
\[g=k_{\epsilon} (\exp H_{\epsilon}(g)) n_{\epsilon}(g)\]
where
\begin{gather*}
\tag{i}
H_{\epsilon}(g)=2^{-1}\log\bigl((gP^-|E_2)\bigr)
\tilde{A}_3^1(1)\in\mathfrak{a},\\
\tag{ii}
n_{\epsilon}(g)
=\exp\Biggl(\mathcal{G}_1\left(
2^{-1} \bigl(\sum{}_{i=0}^7 (g Q^+(e_i)|E_2)e_i
\bigr)
/(gP^-|E_2)\right)\\
+\mathcal{G}_2\left(
-2^{-1} \bigl(\sum{}_{i=1}^7 (g F_3^1(e_i)|E_2)
e_i\bigr)/(gP^-|E_2)\right)
\Biggr)\in N^+,\\
\tag{iii}
k_{\epsilon}(g)=gn_{\epsilon}(g)^{-1}\exp (-H_{\epsilon}(g)).
\end{gather*}
\end{main-theorem}
\medskip

We denote the elements 
$P_{12}^-,P_{13}^-\in \mathcal{J}^1$ by
$P_{12}^-:=h^1(-1,1,0;0,0,1)
=P^-$ and
$P_{13}^-:=h^1(-1,0,1;0,1,0)$,
respectively.
We will prove
the following theorems in \S 13.

\begin{theorem}\label{itd-07}
$\mathcal{F}$ 
decomposes into the following two $K_{\epsilon}$-orbits:
\[
\mathcal{F}=\coprod{}_{i=2}^3 K_{\epsilon} \cdot [P^-_{1i}]
\]
where
\begin{align*}
K_{\epsilon} \cdot [P^-_{12}]
&=\{[X]\in\mathcal{F}|~(X|E_2)\ne 0\}
=\{[X]\in\mathcal{F}|~(X|E_2)> 0\},\\
K_{\epsilon} \cdot [P^-_{13}]
&=\{[X]\in\mathcal{F}|~(X|E_2)=0\}.
\end{align*}
\end{theorem}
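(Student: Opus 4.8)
The plan is to separate the two candidate orbits by the $K_{\epsilon}$-invariant quantity $(X|E_2)$, dispose of the ``large'' orbit using the $K_{\epsilon}$-Iwasawa decomposition already proved in Main~Theorem~\ref{itd-06}, and dispose of the ``boundary'' orbit by reducing it to the transitivity of $\mathrm{D}_4=\mathrm{Spin}(8)$ on a $7$-sphere.

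First I would set up the invariant. Writing $X=\sum_{i=1}^3(\xi_iE_i+F_i^1(x_i))$, formula~(\ref{itd001}) gives $(X|E_2)=\xi_2$, which scales positively under $X\mapsto rX$ $(r>0)$; hence the conditions $(X|E_2)>0$ and $(X|E_2)=0$ are well defined on $\mathcal{F}$. Every $g\in K_{\epsilon}=(\mathrm{F}_{4(-20)})_{E_2}$ fixes $E_2$ and preserves $(\,\cdot\,|\,\cdot\,)$, so $(gX|E_2)=(gX|gE_2)=(X|E_2)$ and both sets on the right-hand side are $K_{\epsilon}$-stable. Next, for $X\in\mathcal{N}_1^-$ the equation $X^{\times 2}=0$ together with the $E_3$-component of (\ref{itd002}) gives $(x_3|x_3)=-\xi_1\xi_2$; since $(E_1|X)=\xi_1<0$ on $\mathcal{N}_1^-$, this forces $\xi_2\geq 0$. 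Thus $(X|E_2)\neq 0$ is equivalent to $(X|E_2)>0$, and
\[
\mathcal{F}=\{[X]\in\mathcal{F}\,|\,(X|E_2)>0\}\ \coprod\ \{[X]\in\mathcal{F}\,|\,(X|E_2)=0\}
\]
is a disjoint union of $K_{\epsilon}$-stable sets. A direct check from (\ref{itd002}) shows $P^-_{12}=P^-\in\mathcal{N}_1^-$ with $(P^-_{12}|E_2)=1>0$ and $P^-_{13}\in\mathcal{N}_1^-$ with $(P^-_{13}|E_2)=0$, so the two orbits land in the two respective pieces, and it remains only to prove transitivity on each piece.

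For the open piece I would use the $K_{\epsilon}$-Iwasawa decomposition. Given $[X]$ with $(X|E_2)>0$, write $X=hP^-$ with $h\in\mathrm{F}_{4(-20)}$ by (\ref{itd007}); then $(hP^-|E_2)>0$, so Main~Theorem~\ref{itd-06} factors $h=k_{\epsilon}an$ with $k_{\epsilon}\in K_{\epsilon}$, $a\in A$, $n\in N^+$. Since $AN^+\subset MAN^+=(\mathrm{F}_{4(-20)})_{[P^-]}$ by Theorem~\ref{itd-05}(1), we have $anP^-=rP^-$ for some $r>0$, whence $[X]=[k_{\epsilon}(anP^-)]=[k_{\epsilon}P^-]=k_{\epsilon}[P^-_{12}]$. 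With the stability above this yields $K_{\epsilon}\cdot[P^-_{12}]=\{[X]\in\mathcal{F}\,|\,(X|E_2)>0\}$.

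For the boundary piece I would normalize and use compact transitivity. If $\xi_2=(X|E_2)=0$ then $\mathrm{tr}(X)=0$ gives $\xi_3=-\xi_1>0$, and scaling by $r=-\xi_1^{-1}>0$ lets me assume $\xi_1=-1,\xi_3=1$; substituting into $X^{\times 2}=0$ via (\ref{itd002}) then forces $x_1=x_3=0$ and $(x_2|x_2)=1$. Hence the boundary piece is represented exactly by $\{-E_1+E_3+F_2^1(x_2)\,|\,(x_2|x_2)=1\}$, a $7$-sphere in ${\bf O}$ whose point $x_2=1$ is $P^-_{13}$. Now $\mathrm{D}_4=(\mathrm{F}_{4(-20)})_{E_1,E_2,E_3}\subset K_{\epsilon}$ fixes $-E_1+E_3$ and preserves the slot $F_2^1({\bf O})$, acting there by octonion-norm-preserving maps; as $\mathrm{D}_4=\mathrm{Spin}(8)$ covers $\mathrm{SO}(8)$ through this $8$-dimensional (triality) representation, it is transitive on the unit sphere and carries any admissible $x_2$ to $1$. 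Therefore every boundary class lies in $\mathrm{D}_4\cdot[P^-_{13}]\subset K_{\epsilon}\cdot[P^-_{13}]$, giving $K_{\epsilon}\cdot[P^-_{13}]=\{[X]\in\mathcal{F}\,|\,(X|E_2)=0\}$ and completing the proof.

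The hard part will be the last transitivity claim: one must know that $\mathrm{D}_4=\mathrm{Spin}(8)$ acts on the octonionic slot $F_2^1({\bf O})$ through the full vector (triality) representation, i.e.\ surjects onto $\mathrm{SO}(8)$ and is thus transitive on $S^7$. This is the slot-$2$ analogue of the identity $M=\mathrm{Spin}(7)=(\mathrm{F}_{4(-20)})_{E_1,E_2,E_3,F_3^1(1)}$ in (\ref{itd011}); if the corresponding transitivity for the $F_2^1$-slot is available from \cite{Na2012} the step is immediate, and otherwise it must be extracted from the explicit $\mathrm{D}_4$-action there, which by triality realizes each of the three octonionic slots as a transitive $8$-dimensional representation.
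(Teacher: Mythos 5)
Your proof is correct, but it reaches the open orbit by a genuinely different route than the paper. The paper proves Theorem~\ref{itd-07} through Lemma~\ref{md-04}: its part (1) (the case $(X|E_2)\ne 0$) is established by decomposing $X$ along the eigenspaces of $\sigma_2$ and $L^{\times}(2E_2)$ (Lemma~\ref{md-02}), invoking the two-orbit decomposition of the hyperboloid $\mathcal{S}^{8,1}_2$ under $(\mathrm{F}_{4(-20)})_{E_2}$ (Lemma~\ref{md-03}, transported from slot $3$ to slot $2$ by $\exp(2^{-1}\pi\tilde{A}_1^1(1))$), and then normalizing with $\mathrm{D}_4$; the $K_{\epsilon}$-Iwasawa decomposition is never used there. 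You instead get the open-orbit transitivity for free from Main~Theorem~\ref{itd-06} together with Theorem~\ref{itd-05}(1) (so that $anP^-=e^{2t}P^-$ collapses in $\mathcal{F}$); this is legitimate and non-circular, since Main~Theorem~\ref{itd-06} is proved in \S\ref{ke}, before \S\ref{md}, independently of Theorem~\ref{itd-07}. Your treatment of the boundary orbit coincides with the paper's Lemma~\ref{md-04}(2), and the ``hard part'' you flag --- transitivity of $\mathrm{D}_4$ on the unit sphere of the $F_2^1$-slot --- is precisely the paper's formula (\ref{id001}), quoted from \cite[(4.5)]{Na2012} and used there in exactly this way, so no extra work is needed. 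The trade-off: your argument is shorter and renders Lemmas~\ref{md-01}--\ref{md-03} and \ref{md-04}(1) unnecessary for this theorem, while the paper's argument keeps the Matsuki-section analysis self-contained (independent of the $K_{\epsilon}$-Iwasawa theorem), exhibits the orbit structure intrinsically through $\mathcal{S}^{8,1}_2$, and yields the slightly stronger pointwise normal form $k_{\epsilon}X=rP^-_{1i}$ with $r>0$ in $\mathcal{N}_1^-$ itself rather than only an identity of classes in $\mathcal{F}$.
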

\medskip

\begin{main-theorem}\label{itd-08}
{\rm (The explicit Matsuki decomposition 
of $\mathrm{F}_{4(-20)}$)}.
\[
\mathrm{F}_{4(-20)}=K_{\epsilon}MAN^+\coprod 
K_{\epsilon}\exp\left(-2^{-1}\pi\tilde{A}^1_1
(1)\right)MAN^+
\]
where $K_{\epsilon}MAN^+=K_{\epsilon}AN^+$ and
\[
K_{\epsilon}
\exp\left(-2^{-1}\pi\tilde{A}^1_1(1)\right)MAN^+
=\{g\in \mathrm{F}_{4(-20)}|~(gP^-|E_2)= 0\}.
\]
\end{main-theorem}

Theorems~\ref{itd-07} and \ref{itd-08}
are special cases of 
general theory
\cite[Theorems 1-Corollary and 3]{Mt1979}.
\medskip

Since the Bruhat decomposition is associated with
the $N^-$-orbits on $\mathrm{F}_{4(-20)}/MAN^+$,
we will show the following theorem in \S 14.

\begin{theorem}\label{itd-09}
$\mathcal{F}$
decomposes into the following two $N^-$-orbits:
\[
\mathcal{F}=N^- \cdot [P^-]
\coprod N^- \cdot [\sigma P^-]\]
where
\begin{align*}
N^- \cdot [P^-]
&=\{[X]\in\mathcal{F}|~(X|\sigma P^-)> 0\}
=\{[X]\in\mathcal{F}|~(X|\sigma P^-)\ne 0\},\\
N^- \cdot [\sigma P^-]
&=\{[X]\in\mathcal{F}|~(X|\sigma P^-)=0\}
=\{[\sigma P^-]\}.
\end{align*}
\end{theorem}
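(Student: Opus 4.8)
The plan is to realize Theorem~\ref{itd-09} as the concrete incarnation of the Bruhat decomposition on the flag space $\mathcal{F}$. By Theorem~\ref{itd-05}(1),(2) we have $\mathcal{F}\simeq \mathrm{F}_{4(-20)}/MAN^+$ with $[P^-]$ corresponding to the base coset, so the $N^-$-orbits on $\mathcal{F}$ are in bijection with the double cosets $N^-\backslash \mathrm{F}_{4(-20)}/MAN^+$. The Bruhat decomposition (2), together with $W=\{M,\sigma M\}$ from Proposition~\ref{itd-03}(1), gives $\mathrm{F}_{4(-20)}=N^-MAN^+\coprod N^-\sigma MAN^+$; since $MAN^+$ fixes $[P^-]$ and $\sigma\cdot[P^-]=[\sigma P^-]$, the two cells project onto $N^-\cdot[P^-]$ and $N^-\cdot[\sigma P^-]$ respectively. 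Each cell is stable under right multiplication by the group $MAN^+$, hence is a union of $MAN^+$-cosets, so the two images are disjoint and $\mathcal{F}=N^-\cdot[P^-]\coprod N^-\cdot[\sigma P^-]$.

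Next I would pin down the degenerate orbit. Conjugating (\ref{itd019}) by $\sigma$ gives $(\mathrm{F}_{4(-20)})_{\sigma P^-}=\sigma(MN^+)\sigma^{-1}=(\sigma M\sigma^{-1})(\sigma N^+\sigma^{-1})$. Since $\sigma\in M^*=N_K(\mathfrak{a})$ normalizes $M=Z_K(\mathfrak{a})$, and $\tilde{\sigma}(N^+)=N^-$, this equals $MN^-=N^-M$. In particular $N^-\subset(\mathrm{F}_{4(-20)})_{\sigma P^-}$, so $N^-$ fixes $\sigma P^-$ exactly and $N^-\cdot[\sigma P^-]=\{[\sigma P^-]\}$.

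The analytic heart is the function $X\mapsto(X|\sigma P^-)$ on $\mathcal{N}_1^-$. Because every element of $\mathrm{F}_{4(-20)}$ preserves the inner product and $N^-$ fixes $\sigma P^-$, this function is $N^-$-invariant: for $n\in N^-$, $(nX|\sigma P^-)=(X|n^{-1}\sigma P^-)=(X|\sigma P^-)$. Evaluating at the base point and using $\sigma P^-=-P^+$ together with (\ref{itd001}) gives $(P^-|\sigma P^-)=-(P^-|P^+)=4>0$, whereas $(\sigma P^-|\sigma P^-)=(P^-|P^-)=0$. Thus every representative of a class in $N^-\cdot[P^-]$ is a positive multiple of some $nP^-$ and so has $(X|\sigma P^-)>0$, while $(X|\sigma P^-)$ vanishes on $[\sigma P^-]$.

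Finally I would assemble the set descriptions. Since $\mathcal{F}$ is the disjoint union of the two orbits and the above shows $(X|\sigma P^-)>0$ on $N^-\cdot[P^-]$ and $=0$ on $\{[\sigma P^-]\}$, no point of $\mathcal{N}_1^-$ has $(X|\sigma P^-)<0$; hence $\{[X]\mid(X|\sigma P^-)\ne0\}=\{[X]\mid(X|\sigma P^-)>0\}=N^-\cdot[P^-]$ and $\{[X]\mid(X|\sigma P^-)=0\}=\{[\sigma P^-]\}=N^-\cdot[\sigma P^-]$. The only genuine subtlety is the bijection between $N^-$-orbits and Bruhat double cosets in the first step (one must verify each cell is saturated under right multiplication by $MAN^+$); everything else reduces to the $N^-$-invariance observation and the single inner-product computation $(P^-|P^+)=-4$, so I expect no serious computational obstacle.
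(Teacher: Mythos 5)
Your steps are individually sound: the orbit/double-coset dictionary for $\mathcal{F}\simeq\mathrm{F}_{4(-20)}/MAN^+$ (Theorem~\ref{itd-05}(1)(2)), the stabilizer identity $(\mathrm{F}_{4(-20)})_{\sigma P^-}=MN^-$ (this is exactly Lemma~\ref{prl2-08}, proved the same way you prove it), the $N^-$-invariance of $X\mapsto(X|\sigma P^-)$, and the evaluations $(P^-|\sigma P^-)=4$ and $(\sigma P^-|\sigma P^-)=0$. The problem is your very first step: you import the Bruhat decomposition $\mathrm{F}_{4(-20)}=N^-MAN^+\coprod N^-\sigma MAN^+$ as a black box from the general theory cited in \S\ref{itd}. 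In this paper that is precisely the statement under construction: Theorem~\ref{itd-09} is proved first, by concrete orbit computations, and is then the engine that proves the Bruhat decomposition of $\mathrm{F}_{4(-20)}$ (Main Theorem~\ref{itd-10}(1), whose proof in \S\ref{gd} deduces $N^-MAN^+=\{g\,|\,(gP^-|\sigma P^-)\ne 0\}$ and $\sigma MAN^+=\{g\,|\,(gP^-|\sigma P^-)=0\}$ from Theorem~\ref{itd-09} together with Theorem~\ref{itd-05}(1)). The Overview states explicitly that the Iwasawa, Bruhat, and Gauss decompositions of $\mathrm{F}_{4(-20)}$ are to be established from concrete orbits and stabilizers, not quoted from general structure theory. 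So your argument, while not formally circular (the general theorem exists in Helgason and Molchanov independently of this paper), inverts the paper's logical order and would reduce Main Theorem~\ref{itd-10}(1) to the very general theory it is meant to reprove explicitly.

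The paper's own proof replaces your Bruhat step by two concrete facts. First, Lemma~\ref{prl2-15}(5) --- resting on the Schwarz-inequality computation of Lemma~\ref{prl2-14}(3) --- shows $(X|\sigma P^-)\geq 0$ for every $X\in\mathcal{N}_1^-$, with equality exactly on the ray $\{s\,\sigma P^-\,|\,s>0\}$; this yields the partition $\mathcal{F}=\{[X]\,|\,(X|\sigma P^-)>0\}\coprod\{[\sigma P^-]\}$ with no group theory at all. (In your write-up the inequality $(X|\sigma P^-)\geq 0$ appears as a corollary of the two-orbit statement; in the paper it is an input.) Second, Lemma~\ref{gd-01} produces an explicit element $z_X=\tilde{\sigma}(n_{\sigma X})\in N^-$ with $z_XX=4^{-1}(X|\sigma P^-)P^-$, which shows that $N^-$ is transitive on the open piece. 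That explicit $z_X$ is not incidental: it is reused in the proof of Main Theorem~\ref{itd-10}(2) to obtain the closed formula for the Gauss factor $n^-_G(g)=(z_{gP^-})^{-1}$, so a non-constructive transitivity argument would lose information the paper needs later. If you substitute Lemmas~\ref{prl2-15}(5) and \ref{gd-01} for your appeal to the general Bruhat decomposition, the remainder of your argument (the stabilizer of $\sigma P^-$, the invariance of the pairing, and the sign bookkeeping) goes through verbatim and coincides with the paper's proof.
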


We will prove the following 
main-theorem
in \S 14.

\begin{main-theorem}\label{itd-10}
{\rm (1) (The explicit Bruhat decomposition 
of $\mathrm{F}_{4(-20)}$)}.
\[
\mathrm{F}_{4(-20)}
=N^-MAN^+\coprod \sigma MAN^+
\]
where
\begin{align*}
N^-MAN^+
&=\{g\in \mathrm{F}_{4(-20)}|~(g P^-|\sigma P^-)\ne 0\}\\
&=\{g\in \mathrm{F}_{4(-20)}|~(g P^-|\sigma P^-)> 0\},\\
\sigma MAN^+&=N^-\sigma MAN^+\\
&=\{g\in \mathrm{F}_{4(-20)}|~(g P^-|\sigma P^-)= 0\}\\
&=\{g\in \mathrm{F}_{4(-20)}|~g [P^-]=[\sigma P^-]\}.
\end{align*}
Furthermore,
the submanifold $N^-MAN^+$
is open dense in $\mathrm{F}_{4(-20)}$.
\smallskip

\noindent
{\rm (2) (The explicit Gauss decomposition 
of $\mathrm{F}_{4(-20)}$)}.

\noindent
For any $g\in N^-MAN^+$, 
there exist unique $n^-_G(g) \in N^-$, 
$m_G(g) \in M$,
$a_G(g) \in A$,
and $n^+_G(g) \in N^+$ such that
\[g=n^-_G(g)m_G(g)a_G(g)n^+_G(g)\]
where 
\begin{gather*}
\tag{i}
a_G(g)=\exp\left(2^{-1}\log
\bigl(4^{-1}(g P^-|\sigma P^-)\bigr)
\tilde{A}_3^1(1)\right)\in A,\\
\tag{ii}
n^-_G(g)
=\exp\Biggl(\mathcal{G}_{-1}\left(
-2^{-1} \bigl(\sum{}_{i=0}^7 (Q^-(e_i)|g P^-)e_i
\bigr)
/(gP^-|\sigma P^-)\right)\\
+\mathcal{G}_{-2}\left(
-2^{-1} \bigl(\sum{}_{i=1}^7 (F_3^1(e_i)|gP^-)
e_i\bigr)/(gP^-|\sigma P^-)\right)
\Biggr)\in N^-,\\
\tag{iii}
n^+_G(g)=n_I(n^-_G(g)^{-1}g)
\in N^+,\\
\tag{iv}
m_G(g)= n^-_G(g)^{-1} g n^+_G(g)^{-1} a_G(g)^{-1}
\in M.
\end{gather*}
Here $n_I:\mathrm{F}_{4(-20)} \to N^+$
is the map used in the Iwasawa decomposition.
\end{main-theorem}
\medskip

\begin{remark}\label{itd-11}
\begin{rm}
In Main Theorems~\ref{itd-04},~\ref{itd-06},~\ref{itd-08},
and \ref{itd-10}, it appears that
the Iwasawa decomposition,
the $K_{\epsilon}-$Iwasawa decomposition,
the Matsuki decomposition,
and 
the Bruhat and Gauss decompositions
of $\mathrm{F}_{4(-20)}$
can be explicitly described 
by using the geometric quantities 
$(g P^-|E_1)$, $(g P^-|E_2)$,
and $(g P^-|\sigma P^-)$
with $g \in \mathrm{F}_{4(-20)}$.
\end{rm}
\end{remark}

\begin{remark}\label{itd-12}
\begin{rm}
The Iwasawa decomposition
of  the exceptional Lie group $\mathrm{F}_{4(-20)}$ 
has been studied 
by R.~Takahashi \cite[Theorem 1]{Tr1979}.
He showed that $AN^+$ transitively and freely
acts on the hyperbolic plane $\mathcal{H}=\mathrm{F}_{4(-20)}/K$.
Thereby, he gave the existence and uniqueness of the factors
of the Iwasawa decomposition for $\mathrm{F}_{4(-20)}$.
In Main-Theorem \ref{itd-04},
we give explicit formulas of $H(g)$ and $n_I(g)$.
\end{rm}
\end{remark}

\section{Preliminaries.}\label{prl2}

If $X=\sum_{i=1}^3(\xi_i E_i+F_i^1(x_i))\in\mathcal{J}^1$,
then we denote
$(X)_{E_i}:=\xi_i \in \mathbb{R}$ and
$(X)_{F_i^1}=x_i \in {\bf O}$.
Set 
$F_3^1({\rm Im}{\bf O})
:=\{F_3^1(p)|~p\in{\rm Im}{\bf O}\}$,
$Q^+({\bf O}):=\{Q^+(x)|~x\in{\bf O}\}$,
and
$Q^-({\bf O}):=\{Q^-(x)|~x\in{\bf O}\}$
in $\mathcal{J}^1$.
Then
\begin{align*}
\tag{\ref{prl2}.1}\label{prl2001}
\mathcal{J}^1&=\mathbb{R}(-E_1+E_2)
\oplus \mathbb{R}P^-\oplus \mathbb{R}E\oplus
\mathbb{R}E_3\oplus F_3^1({\rm Im}{\bf O})\\
&\quad \oplus Q^+({\bf O})\oplus Q^-({\bf O}).
\end{align*}
So, for any $X\in\mathcal{J}^1$, 
we can uniquely write 
\begin{align*}
X&=r(-E_1+E_2)+sP^-+u E+v E_3
+F_3^1(p)+Q^+(x)+Q^-(y)\\
&=
\begin{pmatrix}
-r-s+u & \sqrt{-1}(s+p) & \sqrt{-1}(x-y)\\
\sqrt{-1}(s-p) & r+s+u & x+y\\
\sqrt{-1}(\overline{x}-\overline{y}) 
& \overline{x}+\overline{y}
& u+v
\end{pmatrix}
\end{align*}
with $r,s,u,v\in\mathbb{R},$
$p\in{\rm Im}{\bf O}$, and $x,y\in{\bf O}$,
and set
\begin{align*}
\{X\}_{-E_1+E_2}&:=r,&\{X\}_{P^-}&:=s,&\{X\}_{E}&:=u,
&\{X\}_{E_3}&:=v,\\
\{X\}_{{\rm Im}F_3^1}&:=p,
&\{X\}_{Q^+}&:=x,
&\{X\}_{Q^-}&:=y.
\end{align*}
\begin{lemma}\label{prl2-01}
\begin{gather*}
\tag{1}
\{X\}_{-E_1+E_2}=2^{-1}(P^-|X).\\
\tag{2}
\{X\}_{Q^-}
=2^{-1}((X)_{F_1^1}-\overline{(X)_{F_2^1}})
=4^{-1}\sum{}_{i=0}^7(Q^+(e_i)|X)e_i.\\
\tag{3}
\{X\}_{\mathrm{Im}F_3^1}={\rm Im}((X)_{F_3^1})
=-2^{-1}\sum{}_{i=1}^7(F_3^1(e_i)|X)e_i.
\end{gather*}
\end{lemma}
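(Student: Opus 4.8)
The plan is to work directly from the explicit coordinate expansion (\ref{prl2001}) together with the inner-product formula (\ref{itd001}). First I would read off the octonion entries of $X$ from the displayed matrix: comparing with $h^1(\xi_1,\xi_2,\xi_3;x_1,x_2,x_3)$ gives $(X)_{F_1^1}=x+y$, $(X)_{F_2^1}=\overline{x}-\overline{y}$, and $(X)_{F_3^1}=s+p$, while the diagonal yields $\xi_1=-r-s+u$, $\xi_2=r+s+u$, $\xi_3=u+v$. With these in hand, parts (2) and (3) each split into an ``algebraic'' identity (the middle expression), which is immediate, and an ``inner-product'' identity (the right expression), which requires a short computation.

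For part (1), I would compute $(P^-|X)$ by plugging the entries of $P^-=h^1(-1,1,0;0,0,1)$ and of $X$ into (\ref{itd001}). The diagonal contribution is $2r+2s$, and the only surviving octonion term is $-2(1|(X)_{F_3^1})=-2(1|s+p)$; using $(1|1)=1$ and $(1|p)=0$ for $p\in\mathrm{Im}\mathbf{O}$ this equals $-2s$, so $(P^-|X)=2r=2\{X\}_{-E_1+E_2}$.

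For the middle equalities of (2) and (3) I would simply note $(X)_{F_1^1}-\overline{(X)_{F_2^1}}=(x+y)-(x-y)=2y=2\{X\}_{Q^-}$ and $\mathrm{Im}((X)_{F_3^1})=\mathrm{Im}(s+p)=p=\{X\}_{\mathrm{Im}F_3^1}$. For the right-hand equalities, I would evaluate the relevant inner products using (\ref{itd001}) and then reconstruct the octonion from its components via the orthonormal basis $\{e_0,\dots,e_7\}$. Concretely, from $Q^+(e_i)=h^1(0,0,0;e_i,\overline{e_i},0)$ I get $(Q^+(e_i)|X)=2(e_i|x+y)-2(\overline{e_i}|\overline{x}-\overline{y})$; invoking the isometry $(\overline a|\overline b)=(a|b)$ of octonion conjugation this collapses to $4(e_i|y)$, so $4^{-1}\sum_{i=0}^7(Q^+(e_i)|X)e_i=\sum_{i=0}^7(e_i|y)e_i=y$. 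Similarly $F_3^1(e_i)=h^1(0,0,0;0,0,e_i)$ gives $(F_3^1(e_i)|X)=-2(e_i|s+p)=-2(e_i|p)$ for $i\ge 1$ (here $(e_i|1)=0$), whence $-2^{-1}\sum_{i=1}^7(F_3^1(e_i)|X)e_i=\sum_{i=1}^7(e_i|p)e_i=p$.

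The computations are all routine; the only points demanding care are the asymmetric sign pattern $+,-,-$ on the three octonion blocks in (\ref{itd001}) and the correct use of the conjugation isometry to merge the two $Q^+$ terms in part (2). I would double-check these sign conventions against (\ref{prl2001}) to be sure the reconstruction of $y$ and $p$ from their basis expansions is consistent.
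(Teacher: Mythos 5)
Your proposal is correct and follows essentially the same route as the paper: expand $X$ in the decomposition (\ref{prl2001}), read off $(X)_{F_i^1}$, and evaluate the relevant inner products via (\ref{itd001}), using the conjugation isometry to collapse the two blocks of $Q^+(e_i)$. The sign checks you flag (the $+,-,-$ pattern and $(\overline a|\overline b)=(a|b)$) are exactly the points the paper's computation also relies on, and all your intermediate values ($(P^-|X)=2r$, $(Q^+(e_i)|X)=4(e_i|y)$, $(F_3^1(e_i)|X)=-2(e_i|p)$) agree with it.
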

\begin{proof} Let $X=r(-E_1+E_2)+sP^-+uE+vE_3
+F_3^1(p)+Q^+(x)+Q^-(y)$
with $r,s,u,v\in\mathbb{R},$
$p\in{\rm Im}{\bf O}$, and $x,y\in{\bf O}$.
Then
$(P^-|X)=2r$, and so (1) follows.
Because of 
$(X)_{F_1^1}=x+y$ and $(X)_{F_2^1}=\overline{x}-\overline{y}$,
$\{X\}_{Q^-}=y=2^{-1}((X)_{F_1^1}-\overline{(X)_{F_2^1}})$.
Now, set $(X)_{F_1^1}=\sum_{i=0}^7p_ie_i$ and
$(X)_{F_2^1}=\sum_{i=0}^7q_ie_i$
with $p_i,q_i\in \mathbb{R}$.
From (\ref{itd001}),
$p_i=2^{-1}(F_1^1(e_i)|X)$ and $q_i=-2^{-1}(F_2^1(e_i)|X)$.
Then
$(X)_{F_1^1}-\overline{(X)_{F_2^1}}
=2^{-1}\sum{}_{i=0}^7(Q^+(e_i)|X)e_i$,
and so (2) follows.
Last, obviously
$\{X\}_{\mathrm{Im}F_3^1}=p={\rm Im}((X)_{F_3^1})$.
Set $(X)_{F_3^1}=\sum_{i=0}^7r_ie_i$
with $r_i \in \mathbb{R}$.
From (\ref{itd001}),
$r_i=-2^{-1}(F_3^1(e_i)|X)$,
and so (3) follows.
\end{proof}

We denote $\mathcal{J}^1(2;{\bf K})
:=\{\xi_1E_1+\xi_2E_2+F_3^1(x)~|~
\xi_i\in\mathbb{R},~x\in{\bf K}\}$
with ${\bf K}={\bf O}$ or $\mathbb{R}.$
\begin{lemma}\label{prl2-02}
\begin{align*}
\tag{1}
\mathcal{J}^1
=&\mathcal{J}^1(2;{\bf O})\oplus
\mathbb{R}E_3\oplus
Q^+({\bf O})\oplus
Q^-({\bf O}).\\
\tag{2}
\mathcal{J}^1(2;{\bf O})
=&\mathbb{R}(-E_1+E_2)\oplus
\mathbb{R}P^-\oplus \mathbb{R}(E-E_3)
\oplus F_3^1({\rm Im}{\bf O}).\\
\tag{3}
\mathcal{J}^1(2;\mathbb{R})
=&\mathbb{R}(-E_1+E_2)\oplus
\mathbb{R}P^-\oplus \mathbb{R}(E-E_3).
\end{align*}
\end{lemma}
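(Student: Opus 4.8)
The plan is to treat all three decompositions as elementary linear-algebra regroupings of the already-established direct sum (\ref{prl2001}), with part (2) carrying the only genuine content and parts (1) and (3) following formally.

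First I would record how the spanning vectors on the right-hand sides sit inside $\mathcal{J}^1(2;{\bf O})$. Directly from the definitions one has $-E_1+E_2$, $E-E_3=E_1+E_2$, and $P^-=-E_1+E_2+F_3^1(1)$, all of which lie in $\mathcal{J}^1(2;\mathbb{R})\subset\mathcal{J}^1(2;{\bf O})$, while $F_3^1(\mathrm{Im}{\bf O})\subset\mathcal{J}^1(2;{\bf O})$ as well. Hence the right-hand side of (2) is contained in $\mathcal{J}^1(2;{\bf O})$.

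For the reverse inclusion and the directness in (2), I would exhibit an explicit inverse. Writing a general combination $a(-E_1+E_2)+bP^-+c(E-E_3)+F_3^1(p)$ with $a,b,c\in\mathbb{R}$, $p\in\mathrm{Im}{\bf O}$, and expanding gives $\xi_1 E_1+\xi_2 E_2+F_3^1(b+p)$ with $\xi_1=-a-b+c$ and $\xi_2=a+b+c$. Given any $\xi_1,\xi_2\in\mathbb{R}$ and $x\in{\bf O}$, this system is uniquely solved by $c=2^{-1}(\xi_1+\xi_2)$, $b=\mathrm{Re}(x)$, $a=2^{-1}(\xi_2-\xi_1)-\mathrm{Re}(x)$, and $p=\mathrm{Im}(x)=x-\mathrm{Re}(x)$. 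This simultaneously shows surjectivity onto $\mathcal{J}^1(2;{\bf O})$ and uniqueness of the coefficients, establishing (2). Part (3) is the identical computation under the constraint $x\in\mathbb{R}$, i.e. $p=0$: the same solution formulas (now with $\mathrm{Im}(x)=0$) give $\mathcal{J}^1(2;\mathbb{R})=\mathbb{R}(-E_1+E_2)\oplus\mathbb{R}P^-\oplus\mathbb{R}(E-E_3)$.

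Finally I would deduce (1) from (2) together with (\ref{prl2001}). Since $E=(E-E_3)+E_3$, one has $\mathbb{R}E\oplus\mathbb{R}E_3=\mathbb{R}(E-E_3)\oplus\mathbb{R}E_3$, so the first five summands of (\ref{prl2001}) regroup as $\bigl(\mathbb{R}(-E_1+E_2)\oplus\mathbb{R}P^-\oplus\mathbb{R}(E-E_3)\oplus F_3^1(\mathrm{Im}{\bf O})\bigr)\oplus\mathbb{R}E_3$, and the parenthesized space is exactly $\mathcal{J}^1(2;{\bf O})$ by (2). Substituting this into (\ref{prl2001}) yields (1). The only thing requiring any care is the coordinate bookkeeping in (2); everything else is formal regrouping, so I expect no genuine obstacle.
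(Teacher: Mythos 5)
Your proposal is correct: the coordinate computation in (2) (with $b=\mathrm{Re}(x)$, $p=\mathrm{Im}(x)$, $c=2^{-1}(\xi_1+\xi_2)$, $a=2^{-1}(\xi_2-\xi_1)-\mathrm{Re}(x)$) is exactly the routine verification required, and (1), (3) do follow by the regroupings you describe from (\ref{prl2001}) and $\mathbb{R}E\oplus\mathbb{R}E_3=\mathbb{R}(E-E_3)\oplus\mathbb{R}E_3$. The paper states Lemma~\ref{prl2-02} without any proof, treating it as a direct calculation, so your argument simply supplies the intended elementary details.
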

\medskip

Let $p,q\in{\rm Im}{\bf O}$ and $x,y\in{\bf O}$.
From \cite[Lemma~7.11]{Na2012},
\[
\tag{\ref{prl2}.2}\label{prl2002}
\left\{\begin{array}{rl}
{\rm (i)}& \exp\mathcal{G}_2(p)(-E_1+E_2)
=(-E_1+E_2)+F_3^1(-2 p)+2(p|p)P^-,\\
\smallskip
{\rm (ii)}
& \exp\mathcal{G}_2(p)P^-
=P^-,\quad
{\rm (iii)}~~~\exp\mathcal{G}_2(p)E=E,
\\
\smallskip
{\rm (iv)}&\exp\mathcal{G}_2(p)E_3=E_3,
\\
\smallskip
{\rm (v)}&
\exp\mathcal{G}_2(p)F_3^1(q)=F_3^1(q)-2(p|q)P^-,\\
\smallskip
{\rm (vi)}& 
\exp\mathcal{G}_2(p)Q^+(y)
=Q^+(y),\\
\smallskip
{\rm (vii)}&
\exp\mathcal{G}_2(p)Q^-(y)=Q^-(y)
+Q^+(-2 p y),
\end{array}\right.
\]
\[
\left\{\begin{array}{rl}
\tag{\ref{prl2}.3}\label{prl2003}
{\rm (i)}&
\exp\mathcal{G}_1(x)(-E_1+E_2)
  =(-E_1+E_2)+Q^-(-x)\\
\smallskip
&-(x|x)(E-3E_3)+Q^+\left((x|x)x\right)
+2^{-1}(x|x)^2P^-,\\
\smallskip
{\rm (ii)}&\exp\mathcal{G}_1(x)P^-=P^-,\quad
{\rm (iii)}~~~\exp\mathcal{G}_1(x)E=E,\\
\smallskip
{\rm (iv)}& \exp\mathcal{G}_1(x)E_3
=E_3+Q^+(x)+(x|x)P^-,\\
\smallskip
{\rm (v)}&
\exp\mathcal{G}_1(x)F_3^1(q)=F_3^1(q)+Q^+(- q x),\\
\smallskip
{\rm (vi)}&
\exp\mathcal{G}_1(x)Q^+(y)=Q^+(y)+2(x|y)P^-,\\
\smallskip
{\rm (vii)}&
\exp\mathcal{G}_1(x)Q^-(y)=Q^-(y)
+2(x|y)(E-3E_3)
+F_3^1\left(2{\rm Im}(x\overline{y})\right)\\
&
+Q^+\left(-3(x|y)x-{\rm Im}(x\overline{y})x
\right)-2(x|y)(x|x)P^-.
\end{array}\right.
\]
We denote the subset $\mathfrak{R}_1$ of $\mathcal{J}^1$
by $\mathcal{R}_1:=\{X \in \mathcal{J}^1|~
X^{\times 2}=0,~X \ne 0\}$,
and call $\mathfrak{R}_1$ the {\it set of  rank 1}.
Then 
$\mathcal{R}_1$ contains
the exceptional  hyperbolic planes $\mathcal{H}$,
$\mathcal{H'}$
and the exceptional null cones $\mathcal{N}_1^+$,
$\mathcal{N}_1^-$.
Since the action of $F_{4(-20)}$ preserves
the cross product,
$F_{4(-20)}$
acts on $\mathcal{R}_1$.
For any subset $S \subset \mathcal{J}^1$
and $Z \in \mathcal{J}^1$,
we denote 
\begin{align*}
 S^{Z}_{>0}&:=\{X\in S|~(Z|X)> 0\},&
 S^{Z}_{<0}&:=\{X\in S|~(Z|X)< 0\},\\
 S^{Z}_{=0}&:=\{X\in S|~(Z|X)= 0\},&
 S^{Z}_{\ne 0}&:=\{X\in S|~(Z|X)\ne 0\}.
\end{align*}
We recall Lemma~\ref{prl2-01}.
For any $X \in (\mathcal{J}^1)^{P^-}_{\ne 0}$,
we define the elements $n_1(X) \in \exp \mathfrak{g}_{\alpha}
\subset N^+$ 
and
$n_2(X) \in \exp \mathfrak{g}_{2\alpha}
\subset N^+$
by
\begin{align*}
n_1(X)
:=&\exp \mathcal{G}_1(\{X\}_{Q^-}/\{X\}_{-E_1+E_2})\\
=&
\exp \mathcal{G}_1
\left(2^{-1}\bigl(\sum{}_{i=0}^7(Q^+(e_i)|X)e_i\bigr)
/(P^-|X)\right),\\
n_2(X):
=&\exp 
\mathcal{G}_2\left(
\{X\}_{\mathrm{Im}F_3^1}/(P^-|X)\right)\\
=&\exp \mathcal{G}_2
\left(-2^{-1}\bigl(\sum{}_{i=1}^7
(F_3^1(e_i)|X)e_i\bigr)/(P^-|X)
\right)
\end{align*}
respectively,
and $n_X:=n_1(X)n_2(X)
=n_2(X)n_1(X) \in N^+$
(see (\ref{itd015})).

\begin{lemma}\label{prl2-03}
{\rm (1)}
For any $n \in N^+$ and $X \in \mathcal{J}^1$,
$(P^-|n X)=(P^-|X)$.
Especially,
$N^+$ acts on $(\mathcal{J}^1)^{P^-}_{\ne 0}$
and $(\mathcal{R}_1)^{P^-}_{\ne 0}$,
respectively.
\smallskip

\noindent
{\rm (2)} For any $X \in (\mathcal{J}^1)^{P^-}_{\ne 0}$,
\begin{align*}
\tag{i}
&n_1(X) X\in (\mathcal{J}^1(2;{\bf O})\oplus
\mathbb{R}E_3\oplus
Q^+({\bf O})) \cap (\mathcal{J}^1)^{P^-}_{\ne 0},\\
\tag{ii}
&\{n_1(X)X\}_{{\rm Im}F_3^1}=\{X\}_{{\rm Im}F_3^1}.
\end{align*}
{\rm (3)} 
If $X\in\mathcal{J}^1(2;{\bf O})\cap
 (\mathcal{J}^1)^{P^-}_{\ne 0}$,
then 
\[
n_2(X)X\in\mathcal{J}^1(2;\mathbb{R})
\cap (\mathcal{J}^1)^{P^-}_{\ne 0}.
\]
\end{lemma}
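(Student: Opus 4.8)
The plan is to verify each of the three claims by direct computation, using the explicit action formulas (\ref{prl2002}) and (\ref{prl2003}) together with Lemma~\ref{prl2-01}. The three parts are logically sequenced: part (1) shows $N^+$ preserves the pairing against $P^-$ (hence preserves the relevant invariant subsets), and parts (2)--(3) track how the two one-parameter factors $n_1(X)$ and $n_2(X)$ successively ``clean up'' the off-diagonal components of $X$, driving a general element of $(\mathcal{J}^1)^{P^-}_{\ne 0}$ into the small algebra $\mathcal{J}^1(2;\mathbb{R})$.

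For part (1), the claim $(P^-\mid nX)=(P^-\mid X)$ for $n\in N^+$ is equivalent, since $N^+$ is connected and generated by $\exp\mathcal{G}_1(x)$ and $\exp\mathcal{G}_2(p)$, to checking it on each generator. First I would use the invariance of the inner product under $\mathrm{F}_{4(-20)}$ to write $(P^-\mid nX)=(n^{-1}P^-\mid X)$, so it suffices to show $n^{-1}P^-=P^-$ for every $n\in N^+$. This is immediate from (\ref{prl2002})(ii) and (\ref{prl2003})(ii), which state $\exp\mathcal{G}_2(p)P^-=P^-$ and $\exp\mathcal{G}_1(x)P^-=P^-$; since $N^+=\exp\mathfrak{n}^+$ is generated by these, $P^-$ is $N^+$-fixed. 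That $N^+$ acts on $(\mathcal{J}^1)^{P^-}_{\ne 0}$ and on $(\mathcal{R}_1)^{P^-}_{\ne 0}$ then follows because $N^+\subset\mathrm{F}_{4(-20)}$ preserves $(P^-\mid\,\cdot\,)$ and preserves the rank condition $X^{\times 2}=0$.

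For part (2), I would apply $n_1(X)=\exp\mathcal{G}_1(x_0)$ with $x_0:=\{X\}_{Q^-}/\{X\}_{-E_1+E_2}$ to the decomposition $X=r(-E_1+E_2)+sP^-+uE+vE_3+F_3^1(p)+Q^+(x)+Q^-(y)$ term by term, reading off each image from (\ref{prl2003})(i)--(vii). The key point is that $x_0$ is chosen precisely so that the $Q^-$-component of $n_1(X)X$ cancels: the $Q^-(-x_0)$ produced by acting on $r(-E_1+E_2)$ must annihilate the surviving $Q^-(y)$ term, which is exactly the definition $x_0=y/r=\{X\}_{Q^-}/\{X\}_{-E_1+E_2}$. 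Inspecting (\ref{prl2003}), none of the generators produces an $\mathrm{Im}F_3^1$-component except (\ref{prl2003})(vii), and that contribution is $F_3^1(2\,\mathrm{Im}(x_0\overline{y}))$; since $y=r\,x_0$ is a real multiple of $x_0$, we have $x_0\overline{y}\in\mathbb{R}$, so its imaginary part vanishes and the $\mathrm{Im}F_3^1$-component is unchanged, giving (ii). For (i), I confirm that after cancellation the image lies in $\mathcal{J}^1(2;{\bf O})\oplus\mathbb{R}E_3\oplus Q^+({\bf O})$, and part (1) guarantees the $P^-$-pairing (equivalently the $-E_1+E_2$ coefficient) is preserved, keeping the result in $(\mathcal{J}^1)^{P^-}_{\ne 0}$.

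For part (3), I would apply $n_2(X)=\exp\mathcal{G}_2(p_0)$ with $p_0:=\{X\}_{\mathrm{Im}F_3^1}/(P^-\mid X)$ to an element $X\in\mathcal{J}^1(2;{\bf O})\cap(\mathcal{J}^1)^{P^-}_{\ne 0}$, which by Lemma~\ref{prl2-02}(2) has the form $r(-E_1+E_2)+sP^-+w(E-E_3)+F_3^1(q)$ with $q\in\mathrm{Im}{\bf O}$. Using (\ref{prl2002})(i)--(v), the only term carrying a new $F_3^1$-component is the action on $-E_1+E_2$, which produces $F_3^1(-2p_0)$; the coefficient $p_0$ is designed so that this cancels the existing $F_3^1(q)$ term (with $q=\{X\}_{\mathrm{Im}F_3^1}$ and $2r=(P^-\mid X)$), clearing the octonionic off-diagonal entry and landing the image in $\mathcal{J}^1(2;\mathbb{R})$. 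The main obstacle throughout is purely bookkeeping: one must correctly sum the $F_3^1$- and $P^-$-contributions coming from several of the formulas in (\ref{prl2002})--(\ref{prl2003}) and verify that, with the prescribed $x_0$ and $p_0$, the targeted components vanish exactly. There is no conceptual difficulty; the delicate part is ensuring the normalization by $(P^-\mid X)$ in the definitions of $n_1(X)$ and $n_2(X)$ matches the coefficients arising in the group action, which I would double-check against Lemma~\ref{prl2-01}(1),(3).
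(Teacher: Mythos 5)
Your proposal is correct and takes essentially the same route as the paper: each part is proved by the same termwise computation with (\ref{prl2002}) and (\ref{prl2003}), with the same cancellations of the $Q^-$- and $F_3^1$-components forced by the normalizations of $n_1(X)$ and $n_2(X)$, and the same use of $\mathrm{Im}(x_0\overline{y})=0$ for $y$ a real multiple of $x_0$. The only cosmetic difference is in (1), where the paper deduces $n^{-1}P^-=P^-$ from (\ref{itd019}) instead of reading $nP^-=P^-$ off (\ref{prl2002})(ii) and (\ref{prl2003})(ii); both amount to the same fact.
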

\begin{proof}
(1) 
From (\ref{itd019}),
$(P^-|n X)=(n^{-1}P^-|X)=(P^-|X)$
and so on.

(2)  
Let
$X=r(-E_1+E_2)+sP^-+uE+vE_3+F_3^1(p)
+Q^+(x)+Q^-(y)$
for some $r, s, u, v\in\mathbb{R},$
$p\in{\rm Im}{\bf O}$, 
and $x,y\in{\bf O}$.
From Lemma~\ref{prl2-01}(1), $r\ne 0$
and put 
$n_1'=n_1(X)=\exp\mathcal{G}_1(r^{-1}y)$.
In (\ref{prl2003}),
we notice that
the equations (\ref{prl2003})(i) and (\ref{prl2003})(vii) 
have terms of
$Q^-(\cdot)$
and the other equations have not
terms of
$Q^-(\cdot)$,
and that
the equations (\ref{prl2003})(v) and (\ref{prl2003})(vii) 
have terms of
$F_3^1(\cdot)$
and the other equations have not
terms of
$F_3^1(\cdot)$.
Therefore
\begin{align*}
{\{n_1'\cdot X\}_{Q^-}}
&=\{n_1'\cdot(r(-E_1+E_2)+Q^-(y)
+(other~terms))\}_{Q^-}\\
&=-r(r^{-1}y)+y+0=0.
\end{align*}
Thus 
$\{n_1(X)X\}_{Q^-}=0$,
so that
$n_1(X) X\in
\mathcal{J}^1(2;{\bf O})\oplus
\mathbb{R}E_3\oplus
Q^+({\bf O})$.
Then  $(P^-|n_1'\cdot X)=(P^-|X)\ne 0$
by (1),
and
\begin{align*}
\{n_1'\cdot X\}_{{\rm Im}F_3^1}
&=\{n_1'\cdot(F_3^1(p)
+Q^-(y)+(other~terms))
\}_{{\rm Im}F_3^1}\\
&=p
+2{\rm Im}\left((r^{-1}y)
\overline{y}\right)+0=p
=\{X\}_{{\rm Im}F_3^1}.
\end{align*}
Hence we obtain (2).

(3) Let
$X=r(-E_1+E_2)+sP^-+u(E-E_3)+F_3^1(p)$
for some $r, s, u\in\mathbb{R}$ and
$p\in{\rm Im}{\bf O}$.
From Lemma~\ref{prl2-01}(1), $r\ne 0$
and put $n_2'=n_2(X)=\exp\mathcal{G}_2((2r)^{-1}p)$.
Using (\ref{prl2002}), we calculate that
\[
n_2' \cdot X =
r(-E_1+E_2)+(s-(2r)^{-1}(p|p))P^-+u(E-E_3). 
\]
Then 
$n_2' \cdot X \in \mathcal{J}^1(2;\mathbb{R})$,
and $(P^-|n_2' \cdot X)=(P^-|X)\ne 0$.
Hence we obtain (3).
\end{proof}
\medskip

\begin{lemma}\label{prl2-04}
\[
\left(\mathcal{J}^1(2;{\bf O})\oplus
\mathbb{R}E_3\oplus
Q^+({\bf O})\right)\cap (\mathcal{R}_1)^{P^-}_{\ne 0}
=\mathcal{J}^1(2;{\bf O})
\cap (\mathcal{R}_1)^{P^-}_{\ne 0}.\]
\end{lemma}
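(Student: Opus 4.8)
The inclusion $\supseteq$ is immediate because $\mathcal{J}^1(2;{\bf O})$ is a subspace of $\mathcal{J}^1(2;{\bf O})\oplus\mathbb{R}E_3\oplus Q^+({\bf O})$, so the entire content is the reverse inclusion $\subseteq$. The plan is to write a general element of the larger ambient space in the normal form $h^1(\xi_1,\xi_2,\xi_3;\cdot)$, impose $X^{\times 2}=0$ through the explicit formula (\ref{itd002}), and then use $(P^-|X)\ne 0$ to eliminate the two ``extra'' coordinates so that $X$ falls back into $\mathcal{J}^1(2;{\bf O})$.

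Concretely, directly from the definitions of the three summands (recall $Q^+(w)=h^1(0,0,0;w,\overline{w},0)$), a general element of $\mathcal{J}^1(2;{\bf O})\oplus\mathbb{R}E_3\oplus Q^+({\bf O})$ is $X=h^1(\xi_1,\xi_2,\xi_3;w,\overline{w},x)$ with $\xi_i\in\mathbb{R}$ and $w,x\in{\bf O}$; here $\xi_1,\xi_2,x$ come from $\mathcal{J}^1(2;{\bf O})$, the coordinate $\xi_3$ from $\mathbb{R}E_3$, and the conjugate pair $(w,\overline{w})$ in the first two octonion slots from $Q^+(w)$. Membership in $\mathcal{J}^1(2;{\bf O})$ is exactly $w=0$ and $\xi_3=0$, so that is the target.

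Feeding $X$ into (\ref{itd002}) and reading off the $E_1,E_2,E_3$ and $F_3^1$ components of $X^{\times 2}=0$ yields $\xi_2\xi_3=(w|w)$, $\xi_1\xi_3=-(w|w)$, $\xi_1\xi_2=-(x|x)$, and $\xi_3x=(w|w)$, the last using $w\overline{w}=(w|w)\in\mathbb{R}$; from (\ref{itd001}) one also gets $(P^-|X)=-\xi_1+\xi_2-2(1|x)$. A convenient feature is that the $F_1^1$ and $F_2^1$ components of $X^{\times 2}$ are never needed, so no non-associative octonion identities enter the argument.

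The proof then splits on $w$. If $w\ne 0$, then $(w|w)>0$, so $\xi_3\ne 0$ and adding the first two relations gives $(\xi_1+\xi_2)\xi_3=0$, hence $\xi_1=-\xi_2$; dividing $\xi_3x=(w|w)=\xi_2\xi_3$ by $\xi_3$ forces $x=\xi_2\in\mathbb{R}$, so $(1|x)=\xi_2$ and $(P^-|X)=-\xi_1+\xi_2-2\xi_2=-(\xi_1+\xi_2)=0$, contradicting $X\in(\mathcal{R}_1)^{P^-}_{\ne 0}$. Hence $w=0$, and then the relations become $\xi_2\xi_3=\xi_1\xi_3=0$ and $\xi_3x=0$; if $\xi_3\ne 0$ these give $\xi_1=\xi_2=0$ and $x=0$, so $X=\xi_3E_3$ and $(P^-|X)=0$ again, a contradiction. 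Therefore $w=0$ and $\xi_3=0$, i.e.\ $X=\xi_1E_1+\xi_2E_2+F_3^1(x)\in\mathcal{J}^1(2;{\bf O})$, which is $\subseteq$. I expect no serious obstacle here: the only care needed is in correctly setting up the conjugate-entry normal form and matching the signs in (\ref{itd002}), after which the single decisive point is the cancellation in $(P^-|X)$ once $x$ is forced to be real.
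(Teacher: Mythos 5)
Your proposal is correct and is essentially the paper's own argument: both write a general element in the coordinates $h^1(\xi_1,\xi_2,\xi_3;w,\overline{w},x)$, read off the $E_1$, $E_2$, and $F_3^1$ components of $X^{\times 2}=0$ from (\ref{itd002}), and derive the contradiction $(P^-|X)=0$. The only difference is cosmetic: the paper case-splits on $\xi_3\ne 0$ first (which subsumes both of your contradiction cases in one computation, since $\xi_3=0$ then forces $w=0$ directly from the $E_2$ component), whereas you split on $w$ first and handle $\xi_3\ne 0$ as a sub-case.
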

\begin{proof}
Obviously, $\mathcal{J}^1(2;{\bf O})
\cap (\mathcal{R}_1)^{P^-}_{\ne 0} \subset
(\mathcal{J}^1(2;{\bf O})\oplus
\mathbb{R}E_3\oplus
Q^+({\bf O}))\cap (\mathcal{R}_1)^{P^-}_{\ne 0}$.
Conversely, take $X\in
(\mathcal{J}^1(2;{\bf O})\oplus
\mathbb{R}E_3\oplus
Q^+({\bf O}))\cap (\mathcal{R}_1)^{P^-}_{\ne 0}$
and set
$X=\xi_1E_1+\xi_2E_2+\xi_3E_3
+F_1^1(x)+F_2^1(\overline{x})+F_3^1(y)$
with $\xi_i\in\mathbb{R}$ and $x,y\in{\bf O}$.
Suppose that
$\xi_3 \ne 0$.
Because of $X\in \mathfrak{P}$
and (\ref{itd002}),
\begin{align*}
{\rm (i)}~&
\xi_2\xi_3-(x|x)=(X^{\times 2})_{E_1}=0,&
{\rm (ii)}~&
\xi_3\xi_1+(x|x)=(X^{\times 2})_{E_2}=0,\\
{\rm (iii)}~&
(x|x)-\xi_3 y=(X^{\times 2})_{F_3^1}=0.
\end{align*}
From (i), (ii), and (iii),  
$X=
-\left((x|x)/\xi_3\right)E_1+\left((x|x)/\xi_3\right)E_2+
\eta E_3
+F_1^1(x)+F_2^1(\overline{x})+F_3^1\left((x|x)/\xi_3\right)$.
Then $(P^-|X)=0$, and
it contradicts with $X\in (\mathcal{R}_1)^{P^-}_{\ne 0}$.
Thus $\xi_3=0$. 
Then $(x|x)=(X^{\times 2})_{E_2}=0$, 
so that $x=0$.
Thus $X=\xi_1E_1+\xi_2E_2
+F_3^1(y) \in \mathcal{J}^1(2;{\bf O})
\cap (\mathcal{R}_1)^{P^-}_{\ne 0}$,
and so $(\mathcal{J}^1(2;{\bf O})\oplus
\mathbb{R}E_3\oplus
Q^+({\bf O}))\cap 
(\mathcal{R}_1)^{P^-}_{\ne 0}
\subset \mathcal{J}^1(2;{\bf O})
\cap (\mathcal{R}_1)^{P^-}_{\ne 0}$.
Hence the result follows.
\end{proof}
\medskip

\begin{lemma}\label{prl2-05}
For any $X \in (\mathcal{R}_1)^{P^-}_{\ne 0}$,
$n_X X
\in \mathcal{J}^1(2;\mathbb{R})\cap
(\mathcal{R}_1)^{P^-}_{\ne 0}$.
Further, 
\begin{align*}
n_X X&=
4^{-1}(2\mathrm{tr}(X)
-(P^-|X)^{-1}\mathrm{tr}(X)^2-(P^-|X))E_1\\
&\quad+4^{-1}(2\mathrm{tr}(X)+(P^-|X)^{-1}\mathrm{tr}(X)^2
+(P^-|X))E_2\\
&\quad+F_3^1\left(4^{-1}(
(P^-|X)^{-1}\mathrm{tr}(X)^2-(P^-|X)
)\right)\\
&=
2^{-1}(P^-|X)(-E_1+E_2)
+4^{-1}(
(P^-|X)^{-1}\mathrm{tr}(X)^2-(P^-|X)
)P^-\\
&\quad
+2^{-1}\mathrm{tr}(X)(E-E_3).
\end{align*}
\end{lemma}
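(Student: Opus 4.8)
The plan is to combine the two preparatory lemmas that have already done the work of moving an arbitrary rank-1 element (with nonzero $P^-$-pairing) into the small algebra $\mathcal{J}^1(2;\mathbb{R})$, and then to pin down the coefficients by invariance of the defining scalars under the $N^+$-action. Concretely, I would first record what $n_X$ does: by Lemma~\ref{prl2-03}(2)(i) the factor $n_1(X)$ kills the $Q^-$-component, landing $X$ in $\mathcal{J}^1(2;{\bf O})\oplus\mathbb{R}E_3\oplus Q^+({\bf O})$, and then Lemma~\ref{prl2-04} (applied in $\mathcal{R}_1$) forces $n_1(X)X$ into $\mathcal{J}^1(2;{\bf O})$ proper, since $X\in(\mathcal{R}_1)^{P^-}_{\ne 0}$ means the $E_3$ and $Q^+$ parts must vanish. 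Feeding this into Lemma~\ref{prl2-03}(3) with the second factor $n_2(X)$ then gives $n_2(X)n_1(X)X=n_XX\in\mathcal{J}^1(2;\mathbb{R})$, and the rank-1 and nonzero-$P^-$-pairing conditions survive because $N^+$ preserves both $\mathcal{R}_1$ (cross-product invariance) and the value $(P^-|\cdot)$ by Lemma~\ref{prl2-03}(1). This establishes the membership assertion $n_XX\in\mathcal{J}^1(2;\mathbb{R})\cap(\mathcal{R}_1)^{P^-}_{\ne 0}$.

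The second, quantitative half is the heart of the lemma. Having shown $n_XX$ lies in the three-dimensional space $\mathcal{J}^1(2;\mathbb{R})=\mathbb{R}(-E_1+E_2)\oplus\mathbb{R}P^-\oplus\mathbb{R}(E-E_3)$ (Lemma~\ref{prl2-02}(3)), I would determine its three coordinates by evaluating three $\mathrm{F}_{4(-20)}$-invariant or $N^+$-invariant scalar quantities that are cheap to read off. The natural choices are $\mathrm{tr}$, $(P^-|\,\cdot\,)$, and the rank-1 condition $X^{\times 2}=0$. Since $\mathrm{tr}$ is $\mathrm{F}_{4(-20)}$-invariant and $(P^-|\,\cdot\,)$ is $N^+$-invariant, we have $\mathrm{tr}(n_XX)=\mathrm{tr}(X)$ and $(P^-|n_XX)=(P^-|X)$; writing $n_XX=a(-E_1+E_2)+bP^-+c(E-E_3)$, these two equations give $c$ and $a$ directly via $\mathrm{tr}(-E_1+E_2)=0$, $\mathrm{tr}(E-E_3)=2$, $(P^-|-E_1+E_2)=2$, $(P^-|E-E_3)=0$, $(P^-|P^-)=0$. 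This yields $c=2^{-1}\mathrm{tr}(X)$ and $a=2^{-1}(P^-|X)$.

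The remaining coordinate $b$ (the $P^-$-coefficient) must come from the rank-1 equation, which is the one computational pinch-point. Using the explicit $X^{\times 2}$ formula (\ref{itd002}) on the simple element $a(-E_1+E_2)+bP^-+c(E-E_3)$—equivalently $\xi_1E_1+\xi_2E_2+\xi_3E_3+F_3^1(p)$ in standard coordinates with the off-diagonal octonion entries lying in $\mathbb{R}$—the condition $(n_XX)^{\times 2}=0$ becomes a scalar relation among $a,b,c$, which I expect to solve for $b$ as $b=4^{-1}\bigl((P^-|X)^{-1}\mathrm{tr}(X)^2-(P^-|X)\bigr)$. The hard part will be keeping the bookkeeping of the conversion between the $\{\,\cdot\,\}$-coordinates and the standard $(\xi_i;x_i)$-coordinates honest while applying (\ref{itd002}); in particular one must verify that $(P^-|X)\ne 0$ legitimately appears in the denominator (guaranteed by $X\in(\mathcal{R}_1)^{P^-}_{\ne 0}$) and that the rank-1 locus of $\mathcal{J}^1(2;\mathbb{R})$ is exactly the conic cut out by this relation. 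Once $a,b,c$ are identified, I would translate back to the $E_1,E_2,F_3^1$ basis using $-E_1+E_2$, $P^-=h^1(-1,1,0;0,0,1)$, and $E-E_3=E_1+E_2$ to produce the first displayed expression, matching the stated coefficients of $E_1$, $E_2$, and $F_3^1$, thereby confirming both displayed forms simultaneously.
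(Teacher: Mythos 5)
Your proposal is correct and follows essentially the same route as the paper: kill the $Q^-$-part with $n_1(X)$ via Lemma~\ref{prl2-03}(2), drop into $\mathcal{J}^1(2;{\bf O})$ via Lemma~\ref{prl2-04}, kill the $\mathrm{Im}F_3^1$-part via Lemma~\ref{prl2-03}(3), and then pin down the three coordinates of $n_XX$ in $\mathcal{J}^1(2;\mathbb{R})$ using $\mathrm{tr}$, $(P^-|\,\cdot\,)$, and one component of $(n_XX)^{\times 2}=0$, exactly as the paper does. The one detail you elide is that Lemma~\ref{prl2-03}(3) yields $n_2(X')X'\in\mathcal{J}^1(2;\mathbb{R})$ with $X'=n_1(X)X$, whereas you need this for $n_XX=n_2(X)X'$, so you must check $n_2(X)=n_2(X')$; this follows from Lemma~\ref{prl2-03}(2)(ii) (namely $\{X'\}_{\mathrm{Im}F_3^1}=\{X\}_{\mathrm{Im}F_3^1}$) together with $(P^-|X')=(P^-|X)$, and is the step the paper makes explicit.
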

\begin{proof}
$N^+$
acts on $(\mathcal{R}_1)^{P^-}_{\ne 0}$,
and $n_i(X) \in N^+$.
Put $X'=n_1(X)X \in  (\mathcal{R}_1)^{P^-}_{\ne 0}$.
By Lemma~\ref{prl2-03}(2),
\[X'\in(\mathcal{J}(2;{\bf O})
\oplus \mathbb{R}E_3\oplus Q^+({\bf O}))
\cap
(\mathcal{R}_1)^{P^-}_{\ne 0}\]
where $(P^-|X')=(P^-|X)\ne 0$ 
and $\{X'\}_{\mathrm{Im}F_3^1}=\{X\}_{
\mathrm{Im}F_3^1}$.
Applying Lemma~\ref{prl2-04}.
\[X' \in \mathcal{J}(2;{\bf O})\cap 
(\mathcal{R}_1)^{P^-}_{\ne 0}.\]
Applying Lemma~\ref{prl2-03}(3), 
\[n_2(X')X'\in
\mathcal{J}^1(2;\mathbb{R})\cap
(\mathcal{R}_1)^{P^-}_{\ne 0}.\]
Then, since
$\exp \mathcal{G}_2(
\{X\}_{\mathrm{Im}F_3^1}/(P^-|X))
=\exp \mathcal{G}_2(
\{X'\}_{\mathrm{Im}F_3^1}/(P^-|X'))$,
we see $n_2(X)=n_2(X')$.
Therefore
\[n_X X=n_2(X)n_1(X)X=n_2(X')X'
\in \mathcal{J}^1(2;\mathbb{R})\cap
(\mathcal{R}_1)^{P^-}_{\ne 0}.\]
Set
$n_X X=\xi_1 E_1+\xi_2 E_2+F_3^1(x) \in 
\mathcal{J}^1(2;\mathbb{R})\cap
(\mathcal{R}_1)^{P^-}_{\ne 0}$ with
$\xi_1, \xi_2, x \in \mathbb{R}$.
Then $\mathrm{tr}(X)=\xi_1+\xi_2$ and
$(0 \ne) (P^-|X)=(P^-|n_X X)
=-\xi_1+\xi_2-2 x$, so that
$\xi_1=2^{-1}\mathrm{tr}(X)-x-2^{-1}(P^-|X)$
and $\xi_2=2^{-1}\mathrm{tr}(X)+x+2^{-1}(P^-|X)$.
From
$(n_X X)^{\times 2}=0$,
$0=((n_X X)^{\times 2})_{E_3}
=\xi_1\xi_2+x^2
=4^{-1}\mathrm{tr}(X)^2-4^{-1}(P^-|X)^2-x(P^-|X)$.
Thus
$x=4^{-1}(
(P^-|X)^{-1}\mathrm{tr}(X)^2-(P^-|X)
)$,
$\xi_1=4^{-1}(2\mathrm{tr}(X)
-(P^-|X)^{-1}\mathrm{tr}(X)^2-(P^-|X))$,
and $\xi_2=
4^{-1}(2\mathrm{tr}(X)+(P^-|X)^{-1}\mathrm{tr}(X)^2
+(P^-|X))$.
Moreover, the last equation follows from
direct calculations.
\end{proof}

Let $i \in \{1,2,3\}$, $t \in \mathbb{R}$, and
$a \in {\bf O}$ with $(a|a)=1$.
From \cite[Lemma~3.10]{Na2012},
we recall
the operation of $\exp \bigl(t \tilde{A}_i^1(a)\bigr)$.
Set \[h^1(\eta_1,\eta_2,\eta_3;y_1,y_2,y_3)
:=\exp \bigl(t \tilde{A}_i^1(a)\bigr)
h^1(\xi_1,\xi_2,\xi_3;x_1,x_2,x_3)\]
with $\xi_i,\eta_i \in \mathbb{R}$ and $x_i,y_i \in
{\bf O}$.
When $i=1$,
\[\tag{\ref{prl2}.4}\label{prl2004}
\left\{
\begin{array}{ccl}
\eta_1&=&\xi_1,\\
\eta_2&=&2^{-1}((\xi_2+\xi_3)
+(\xi_2-\xi_3)\cos 2t)+(a|x_1) \sin 2t,\\
\eta_3&=&2^{-1}((\xi_2+\xi_3)
-(\xi_2-\xi_3)\cos 2t)-(a|x_1) \sin 2t,\\
y_1&=&x_1-2^{-1}(\xi_2-\xi_3) a\sin 2t
-2(a|x_1)a\sin^2 t,\\
y_2&=&x_2\cos t-\overline{x_3a} \sin t,\\
y_3&=&x_3\cos t+\overline{ax_2} \sin t
\end{array}\right.
\]
and when $i\in\{2,3\}$, 
\[\tag{\ref{prl2}.5}\label{prl2005}
\left\{
\begin{array}{ccl}
\eta_i&=&\xi_i,\\
\eta_{i+1}
&=&2^{-1}((\xi_{i+1}+\xi_{i+2})
+(\xi_{i+1}-\xi_{i+2})\cosh 2t)-(a|x_i) \sinh 2t,\\
\eta_{i+2}&=&2^{-1}((\xi_{i+1}+\xi_{i+2})
-(\xi_{i+1}-\xi_{i+2})\cosh 2t)+(a|x_i) \sinh 2t,\\
y_i&=&x_i
-2^{-1}(\xi_{i+1}-\xi_{i+2})a\sinh 2t+2(a|x_i)a\sinh^2 t.\\
y_{i+1}&=&x_{i+1}\cosh t+\overline{x_{i+2}a}\sinh t,\\
y_{i+2}&=&x_{i+2}\cosh t+\overline{ax_i} \sinh t
\end{array}\right.
\]
where indexes $i,i+1,i+2$ are
counted modulo $3$.
In particular,
\[
\tag{\ref{prl2}.6}\label{prl2006}
\exp(2^{-1}\pi \tilde{A}_1^1(1)) h^1(\xi_1,\xi_2,\xi_3;x_1,x_2,x_3)
=h^1(\xi_1,\xi_3,\xi_2;-\overline{x_1},-\overline{x_3},\overline{x_2})\]
with $\xi_i \in\mathbb{R}$ and $x_i\in{\bf O}$.
Using (\ref{prl2005}),
we have the following lemma.

\begin{lemma}\label{prl2-06}
Let $t \in \mathbb{R}$.
\begin{align*}
&a_t(r(-E_1+E_2)+sP^-+u(E-E_3))\\
=&re^{-2t}(-E_1+E_2)+(r\sinh 2t+se^{2t})P^-
+u(E-E_3)
\end{align*}
where $r,s,u\in\mathbb{R}$.
\end{lemma}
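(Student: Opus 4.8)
The plan is to reduce everything to the explicit matrix formula (\ref{prl2005}) for the one-parameter group $\exp(t\tilde{A}_3^1(a))$, specialized to $i=3$ and $a=1$. First I would rewrite the argument in the standard coordinates $h^1(\xi_1,\xi_2,\xi_3;x_1,x_2,x_3)$. Since $-E_1+E_2=h^1(-1,1,0;0,0,0)$, $P^-=h^1(-1,1,0;0,0,1)$ and $E-E_3=h^1(1,1,0;0,0,0)$, the vector $r(-E_1+E_2)+sP^-+u(E-E_3)$ becomes $h^1(-r-s+u,\,r+s+u,\,0;\,0,0,s)$. In particular $\xi_3=0$, $x_1=x_2=0$ and $x_3=s\in\mathbb{R}$, so the argument lies in $\mathcal{J}^1(2;\mathbb{R})$ (Lemma~\ref{prl2-02}(3)).

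Next I would substitute these values into (\ref{prl2005}) with $i=3$, reading indices modulo $3$ so that $(i,i+1,i+2)=(3,1,2)$, and with $a=1$ (so $(a|a)=1$ and $(a|x_3)=(1|s)=s$). The two \emph{rotated} octonionic coordinates of the image are produced only from $x_1$ and $x_2$, both of which vanish, so the $F_1^1$- and $F_2^1$-components of $a_t\bigl(r(-E_1+E_2)+sP^-+u(E-E_3)\bigr)$ vanish, while $\eta_3=\xi_3=0$; hence the image again lies in $\mathcal{J}^1(2;\mathbb{R})$. The surviving entries are $\eta_1=u-(r+s)\cosh 2t-s\sinh 2t$, $\eta_2=u+(r+s)\cosh 2t+s\sinh 2t$ and $y_3=s+(r+s)\sinh 2t+2s\sinh^2 t$. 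Using $2\sinh^2 t=\cosh 2t-1$ I would simplify the last one to $y_3=(r+s)\sinh 2t+s\cosh 2t=r\sinh 2t+se^{2t}$.

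Finally I would translate the image $h^1(\eta_1,\eta_2,0;0,0,y_3)$ back into the basis $\{-E_1+E_2,\,P^-,\,E-E_3\}$ by matching coordinates: the $P^-$-coefficient is the $F_3^1$-entry $y_3$, the $(E-E_3)$-coefficient is $2^{-1}(\eta_1+\eta_2)=u$, and the $(-E_1+E_2)$-coefficient is $2^{-1}(\eta_2-\eta_1)-y_3$. A short computation with $\cosh 2t\pm\sinh 2t=e^{\pm 2t}$ reduces this last quantity to $re^{-2t}$, which yields exactly the claimed formula. The computation is entirely routine; the only points requiring care are the modulo-$3$ index shift when specializing (\ref{prl2005}) to $i=3$, the verification that the off-block octonionic components really cancel (so that the image never leaves $\mathcal{J}^1(2;\mathbb{R})$), and the hyperbolic-identity bookkeeping needed to recognize the $e^{\pm 2t}$ normal form.
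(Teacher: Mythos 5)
Your computation is correct and is precisely the argument the paper intends: the paper's entire proof of Lemma~\ref{prl2-06} is the sentence ``Using (\ref{prl2005}), we have the following lemma,'' and you have simply carried out that substitution ($i=3$, $a=1$, indices mod $3$) in full, with the coordinate bookkeeping and the identities $2\sinh^2 t=\cosh 2t-1$, $\cosh 2t\pm\sinh 2t=e^{\pm 2t}$ all handled correctly. One caveat worth recording: your claim that the $F_1^1$- and $F_2^1$-components of the image are built only from $x_1$ and $x_2$ uses the reading $y_{i+2}=x_{i+2}\cosh t+\overline{a\,x_{i+1}}\sinh t$ of (\ref{prl2005}); as literally printed the paper has $\overline{a x_i}$ in that slot, which with $i=3$ would give $y_2=s\sinh t\ne 0$ and contradict the lemma itself --- this is a typo in (\ref{prl2005}) (it is inconsistent with its compact analogue (\ref{prl2004}), where the corresponding term is $\overline{a x_{i+1}}$), so the reading you adopted is the correct one.
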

\medskip

\begin{lemma}\label{prl2-07}
For any
$m \in M$,
$t \in \mathbb{R}$, and $n \in N^+$,
\[
m a_t n P^-=e^{2 t}P^-.
\]
Furthermore, $A \cap MN^+=\{1\}$
and $M \cap AN^+=\{1\}$.
\end{lemma}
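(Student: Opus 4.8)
The plan is to reduce the entire statement to the three facts that $N^+$ and $M$ fix $P^-$ while $A$ scales it. First, by (\ref{itd019}) we have $(\mathrm{F}_{4(-20)})_{P^-}=N^+M=MN^+$; taking $n=1$ shows $mP^-=P^-$ for all $m\in M$, and taking $m=1$ shows $nP^-=P^-$ for all $n\in N^+$. Next, writing $P^-=0\cdot(-E_1+E_2)+1\cdot P^-+0\cdot(E-E_3)$ and applying Lemma~\ref{prl2-06} with $r=u=0$, $s=1$ gives $a_tP^-=e^{2t}P^-$. Combining these and using that $m$ acts $\mathbb{R}$-linearly on $\mathcal{J}^1$,
\[
ma_tnP^-=ma_tP^-=e^{2t}(mP^-)=e^{2t}P^-,
\]
which is the displayed identity.

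For the two intersections I would test elements on $P^-$. If $a_t\in A\cap MN^+$, write $a_t=mn$ with $m\in M$, $n\in N^+$; then $e^{2t}P^-=a_tP^-=mnP^-=P^-$, so $e^{2t}=1$, $t=0$, and $a_t=1$. Similarly, if $g\in M\cap AN^+$, write $g=m=a_tn$; then $P^-=mP^-=a_tnP^-=e^{2t}P^-$, again forcing $t=0$, so that $g=n\in M\cap N^+$. Thus the second intersection statement reduces to showing $M\cap N^+=\{1\}$.

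This last point is the only genuine computation, and I expect it to be the main (if modest) obstacle. Given $g\in M\cap N^+$, write $g=\exp\mathcal{G}_2(p)\exp\mathcal{G}_1(x)$ by (\ref{itd015}) and use that $g\in M$ fixes $E_1$ and $E_3$ by (\ref{itd011}). Acting on $E_3$ via (\ref{prl2003})(iv) followed by (\ref{prl2002})(iv),(vi),(ii) gives $gE_3=E_3+Q^+(x)+(x|x)P^-$; comparing the $Q^+$-component with that of $E_3$ forces $x=0$. With $x=0$ we have $g=\exp\mathcal{G}_2(p)$, and expanding $E_1=-\tfrac12(-E_1+E_2)+\tfrac12E-\tfrac12E_3$ and applying (\ref{prl2002})(i),(iii),(iv) yields $gE_1=E_1+F_3^1(p)-(p|p)P^-$; since the $P^-$-component of $E_1$ and of $F_3^1(p)$ both vanish, comparing $P^-$-components gives $(p|p)=0$, hence $p=0$. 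Therefore $g=1$, which finishes $M\cap N^+=\{1\}$ and with it both intersection statements. (One could instead argue abstractly that $M=\mathrm{Spin}(7)$ is compact while $N^+=\exp\mathfrak{n}^+$ is simply connected nilpotent, so $M\cap N^+$ is a compact subgroup of $N^+$ and hence trivial; but the explicit computation fits the style of the surrounding lemmas.)
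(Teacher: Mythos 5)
Your proposal is correct, and for the displayed identity and for $A\cap MN^+=\{1\}$ it is exactly the paper's argument: evaluate on $P^-$, using the stabilizer description $(\mathrm{F}_{4(-20)})_{P^-}=MN^+=N^+M$ of (\ref{itd019}) together with the scaling $a_tP^-=e^{2t}P^-$ supplied by Lemma~\ref{prl2-06}.

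The one place where you genuinely diverge is the second intersection, and there your version is actually more complete than the printed proof. The paper, after deducing $t=0$ from $P^-=mP^-=a_tnP^-=e^{2t}P^-$, concludes $M\cap AN^+=\{1\}$ outright, leaving the residual case $m=n\in M\cap N^+$ untreated at this point of the text; in the paper that triviality only becomes available later, in Lemma~\ref{prl2-12}(1), where $\mathrm{D}_4\cap N^{\pm}=\{1\}$ is proved and combined with $M\subset\mathrm{D}_4$. You identified precisely this as the remaining obstacle and closed it by hand: writing $g=\exp\mathcal{G}_2(p)\exp\mathcal{G}_1(x)$ via (\ref{itd015}), forcing $x=0$ from $gE_3=E_3$ using (\ref{prl2003})(iv), (\ref{prl2002}) and the direct sum (\ref{prl2001}), then forcing $p=0$ from $gE_1=E_1$ using (\ref{prl2002})(i). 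This is, in substance, the same computation the paper performs later for Lemma~\ref{prl2-12}(1) (there applied to $-E_1+E_2$ rather than $E_1$), so you have in effect proved that later lemma as a sub-step; the cost is a little duplication, the benefit is that Lemma~\ref{prl2-07} becomes self-contained and the logical order of the section is untangled. Your parenthetical alternative (a compact subgroup $M\cong\mathrm{Spin}(7)$ meeting the simply connected nilpotent group $N^+$ must do so trivially) is also valid and shorter, though, as you say, it departs from the paper's computational style.
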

\begin{proof}
From (\ref{itd019}) and Lemma~\ref{prl2-06},
$m a_t n P^-=e^{2 t}P^-$.
Suppose $a_t=m n$ 
for some $t \in \mathbb{R}$, $m \in M$, and $n \in N^+$.
From the above equation and (\ref{itd019}),
$e^{2 t}P^-=a_t P^- =mn P^-= P^-$.
Thus $t=0$, and $A \cap MN^+=\{1\}$.
Similarly,
suppose $m=a_t n$ 
for some $m \in M$, $t \in \mathbb{R}$, and $n \in N^+$.
Then $ P^- =m P^-= a_t n P^-=e^{2 t}P^-$.
Thus $t=0$, and $M \cap AN^+=\{1\}$.
\end{proof}
\medskip

\begin{lemma}\label{prl2-08}
$(\mathrm{F}_{4(-20)})_{\sigma P^-}=MN^-$.
\end{lemma}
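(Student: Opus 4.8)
The plan is to obtain the stabilizer of $\sigma P^-$ by conjugating the already-known stabilizer of $P^-$ by $\sigma$, so that the whole statement follows formally from the description $(\mathrm{F}_{4(-20)})_{P^-}=MN^+$ in (\ref{itd019}). Since $\sigma\in\mathrm{F}_{4(-20)}$ by (\ref{itd008}) and $\sigma^{-1}=\sigma$, for any $g\in\mathrm{F}_{4(-20)}$ one has $g(\sigma P^-)=\sigma P^-$ if and only if $(\sigma g\sigma)P^-=P^-$, i.e. $\sigma g\sigma\in(\mathrm{F}_{4(-20)})_{P^-}$. This gives the conjugation identity
\[
(\mathrm{F}_{4(-20)})_{\sigma P^-}=\sigma\,(\mathrm{F}_{4(-20)})_{P^-}\,\sigma .
\]

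Next I would substitute (\ref{itd019}) to get $(\mathrm{F}_{4(-20)})_{\sigma P^-}=\sigma MN^+\sigma=(\sigma M\sigma)(\sigma N^+\sigma)$, and then identify the two conjugated factors separately. For the first factor, I use that $M\subset K=(\mathrm{F}_{4(-20)})^{\tilde{\sigma}}$ (which holds by (\ref{itd011}), since $M=\mathrm{Spin}(7)\subset K$); as $\tilde{\sigma}$ fixes every point of $K$ and $\tilde{\sigma}(m)=\sigma m\sigma$, this yields $\sigma M\sigma=M$. For the second factor, the relation $\tilde{\sigma}(N^{\pm})=N^{\mp}$ recorded in (\ref{itd018}) gives $\sigma N^+\sigma=\tilde{\sigma}(N^+)=N^-$.

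Combining these two identifications I conclude $(\mathrm{F}_{4(-20)})_{\sigma P^-}=MN^-$, which is the assertion of the lemma.

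The argument is essentially purely formal, so there is no real obstacle; the only points that deserve care are the two identifications of the conjugated factors, namely that $\sigma$ fixes $M$ pointwise (where the inclusion $M\subset K$ via (\ref{itd011}) is essential) and that conjugation by $\sigma$ interchanges the positive and negative root subgroups $N^+$ and $N^-$ (furnished by (\ref{itd018})). Both are supplied directly by the cited facts, so the proof reduces to recording this chain of equalities.
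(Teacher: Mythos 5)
Your proof is correct and follows essentially the same route as the paper: conjugate the known stabilizer $(\mathrm{F}_{4(-20)})_{P^-}=MN^+$ from (\ref{itd019}) by $\sigma$, use $M\subset K=(\mathrm{F}_{4(-20)})^{\tilde{\sigma}}$ to see that $\sigma$ commutes with $M$, and use $\tilde{\sigma}(N^+)=N^-$ to convert the unipotent factor. The paper's proof is exactly this chain of equalities, so no further comment is needed.
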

\begin{proof}
Because of
$M \subset K=(\mathrm{F}_{4(-20)})^{\tilde{\sigma}}$,
$\sigma M=M \sigma$.
Using (\ref{itd019}), 
$(\mathrm{F}_{4(-20)})_{\sigma P^-}=\sigma
(\mathrm{F}_{4(-20)})_{P^-} \sigma^{-1} 
=\sigma MN^+\sigma^{-1}
=M\tilde{\sigma}(N^+)=
MN^-$.
\end{proof}
\medskip

\begin{lemma}\label{prl2-09}
{\rm (1)}
For any $t \in \mathbb{R}$, $x \in {\bf O}$,
and $p \in \mathrm{Im}{\bf O}$,
\[
a_t (\mathcal{G}_1(x)+\mathcal{G}_2(p))
a_t^{-1}
=\mathcal{G}_1(e^{t} x) + \mathcal{G}_2(e^{2t} p).\]
{\rm (2)}
$A N^+ = N^+ A$.
Furthermore,
$AN^+$ is a subgroup of $\mathrm{F}_{4(-20)}$.
\smallskip

\noindent
{\rm (3)}
$M A N^+$ is a subgroup of $\mathrm{F}_{4(-20)}$.
\end{lemma}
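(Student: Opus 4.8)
The plan is to read part (1) straight off the root space decomposition and then bootstrap parts (2) and (3) from it via the normalizer structure. For (1), I would observe that $a_t=\exp(t\tilde{A}_3^1(1))$, and that by (\ref{itd013}) and (\ref{itd014}) we have $\mathcal{G}_1(x)\in\mathfrak{g}_{\alpha}$ and $\mathcal{G}_2(p)\in\mathfrak{g}_{2\alpha}$, while $\alpha(\tilde{A}_3^1(1))=1$. Hence $[\tilde{A}_3^1(1),\mathcal{G}_1(x)]=\mathcal{G}_1(x)$ and $[\tilde{A}_3^1(1),\mathcal{G}_2(p)]=2\mathcal{G}_2(p)$, so $\mathrm{ad}(t\tilde{A}_3^1(1))$ acts as multiplication by $t$ on $\mathcal{G}_1(x)$ and by $2t$ on $\mathcal{G}_2(p)$. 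Reading $a_t(\cdot)a_t^{-1}$ on Lie algebra elements as $\mathrm{Ad}(a_t)=\exp(t\,\mathrm{ad}(\tilde{A}_3^1(1)))$ and exponentiating, $\mathrm{Ad}(a_t)$ scales $\mathcal{G}_1(x)$ by $e^{t}$ and $\mathcal{G}_2(p)$ by $e^{2t}$. Since $\mathcal{G}_1$ and $\mathcal{G}_2$ are linear in their octonionic arguments, $e^{t}\mathcal{G}_1(x)=\mathcal{G}_1(e^{t}x)$ and $e^{2t}\mathcal{G}_2(p)=\mathcal{G}_2(e^{2t}p)$, giving (1).

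For part (2), I would combine (1) with the standard identity $a\exp(Y)a^{-1}=\exp(\mathrm{Ad}(a)Y)$. Taking $Y=\mathcal{G}_1(x)+\mathcal{G}_2(p)\in\mathfrak{n}^+$ and any $a_t\in A$, part (1) shows $\mathrm{Ad}(a_t)Y$ lies again in $\mathfrak{n}^+$, so $a_tN^+a_t^{-1}=N^+$; that is, $A$ normalizes $N^+$. The equality $AN^+=N^+A$ then follows formally by writing $an=(ana^{-1})a$ and $na=a(a^{-1}na)$, and since $A$ normalizes the subgroup $N^+$, the product $AN^+$ is closed under multiplication and inversion, hence a subgroup.

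For part (3), the extra inputs are that $M$ commutes with $A$ by (\ref{itd010}), so $MA=AM$ is a subgroup, and that $M$ normalizes $N^+$. The latter is the one step carrying genuine content: using $M=Z_K(\mathfrak{a})$, for $m\in M$ and $H\in\mathfrak{a}$ one has $\mathrm{Ad}(m^{-1})H=H$, so for $X\in\mathfrak{g}_{\lambda}$, $[H,\mathrm{Ad}(m)X]=\mathrm{Ad}(m)[\mathrm{Ad}(m^{-1})H,X]=\lambda(H)\,\mathrm{Ad}(m)X$. Thus $\mathrm{Ad}(m)$ preserves each root space $\mathfrak{g}_{\lambda}$, hence preserves $\mathfrak{n}^+=\mathfrak{g}_{\alpha}\oplus\mathfrak{g}_{2\alpha}$ by (\ref{itd012}), giving $mN^+m^{-1}=\exp(\mathrm{Ad}(m)\mathfrak{n}^+)=N^+$. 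Consequently the subgroup $MA$ normalizes $N^+$, and $MAN^+=(MA)N^+$ is a subgroup by the same product-of-normalizing-subgroups argument used in (2). The only place requiring care is this normalization claim for $M$; every other step is formal, so I expect no real obstacle.
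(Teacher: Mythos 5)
Your proof is correct. Parts (1) and (2) coincide with the paper's argument: the paper likewise sets $T(t)\phi=a_t\phi a_t^{-1}=\exp(t\,\mathrm{ad}_{\tilde{A}_3^1(1)})\phi$ and reads off the scaling factors $e^t$, $e^{2t}$ from (\ref{itd013}) and (\ref{itd014}), then rewrites $a_tn=(a_tna_t^{-1})a_t$ and $na_t=a_t(a_t^{-1}na_t)$ to get $AN^+=N^+A$ and closure under products and inverses. The only divergence is in part (3): for the needed commutation of $M$ with $N^+$, the paper simply invokes the already-established set equality $MN^+=N^+M$ coming from (\ref{itd019}) (the identification $(\mathrm{F}_{4(-20)})_{P^-}=N^+M=MN^+$ quoted from the companion paper), together with $MA=AM$ from (\ref{itd010}); you instead re-derive that $M$ normalizes $N^+$ from scratch, using $M=Z_K(\mathfrak{a})$ to show $\mathrm{Ad}(m)$ preserves each root space and hence $\mathfrak{n}^+$, so that $mN^+m^{-1}=\exp(\mathrm{Ad}(m)\mathfrak{n}^+)=N^+$. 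Your route is self-contained and standard, at the cost of a short extra computation; the paper's is shorter but leans on an external structural fact about the stabilizer of $P^-$. Both are sound, and your identification of the normalization of $N^+$ by $M$ as the one step with real content is accurate.
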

\begin{proof}
(1)
Set 
$T(t) \in
 \mathrm{GL}_{\mathbb{R}}(\mathfrak{f}_{4(-20)})$
and 
$\mathrm{ad}_{\tilde{A}_3^1(1)} \in
\mathrm{End}_{\mathbb{R}}(\mathfrak{f}_{4(-20)})$
as
$T(t) \phi:=a_t~\phi~a_t^{-1}$ 
and
$\mathrm{ad}_{\tilde{A}_3^1(1)}\phi:=[\tilde{A}_3^1(1),\phi]$
for $\phi \in 
\mathfrak{f}_{4(-20)}$,
respectively.
Then  
$T(t)=\exp (t~\mathrm{ad}_{\tilde{A}_3^1(1)})$,
and using (\ref{itd013}) and (\ref{itd014}),
$T(t) \mathcal{G}_1(x)
=(\sum (t~\mathrm{ad}_{\tilde{A}_3^1(1)})^n/n!)
\mathcal{G}_1(x)
= \mathcal{G}_1\left( (\sum (1/n!)t^n)x \right)
=\mathcal{G}_1(e^{t} x)$
and 
$T(t) \mathcal{G}_2(p)
=(\sum (t~\mathrm{ad}_{\tilde{A}_3^1(1)})^n/n!) 
\mathcal{G}_2(p)
= \mathcal{G}_1\left( (\sum (1/n!)(2 t)^n) p \right)
=\mathcal{G}_2(e^{2 t} p)$.
Thus we obtain (1).

(2) 
From (1),
$a_t n
=a_t n a_t^{-1} a_t
=\exp \left(a_t(\mathcal{G}_1(x)+\mathcal{G}_2(p))a_t^{-1}
\right)a_t=\exp (\mathcal{G}_1(e^t x)
+\mathcal{G}_2(e^{2 t} p))a_t $
and  $n a_t
=a_t a_t^{-1} n a_t
=a_t \exp (\mathcal{G}_1(e^{-t} x)
+\mathcal{G}_2(e^{-2 t} p))$.
This implies that $AN^+=NA^+$.
Therefore $(a_t n)^{-1}(a_s n') \in AN^+$ for all $s,t \in \mathbb{R}$
and $n, n' \in N^+$,
so that $AN^+$ is a subgroup.

(3) Because of
(\ref{itd010}), 
$M N^+ =N^+ M$,
and 
$A N^+ =N^+ A$,
we get $(m a_t n)^{-1}(m' a_s n') \in M A N^+$
for all $m, m' \in M$, $s, t \in \mathbb{R}$, and $n,n' \in N^+$.
Thus $MAN^+$ is a subgroup of $\mathrm{F}_{4(-20)}$.
\end{proof}
\medskip

\begin{lemma}\label{prl2-10}
Let $k \in K$, $k_{\epsilon} \in K_{\epsilon}$,
$m \in M$,
$t \in \mathbb{R}$,
$n \in N^+$, and $z \in N^-$.
\begin{align*}
\tag{1}
(k a_t n P^-|E_1)&=-e^{2t}.\\
\tag{2}
(k_{\epsilon} a_t n P^-|E_2)&=e^{2 t}.\\
\tag{3}
(z m a_t n P^-|\sigma P^-)&=4 e^{2 t}.
\end{align*}
\end{lemma}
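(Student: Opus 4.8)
The plan is to reduce each of the three pairings to a constant base case by transporting the actions of $a_t$, $n$, $m$, and $z$ onto one of the two slots of the inner product, using the stabilizer descriptions together with the $\mathrm{F}_{4(-20)}$-invariance of $(\,\cdot\,|\,\cdot\,)$. The common building block I would record first is that, for every $t\in\mathbb{R}$, $m\in M$, and $n\in N^+$,
\[
m a_t n P^- = e^{2t} P^-.
\]
This is exactly Lemma~\ref{prl2-07}; alternatively it follows by combining (\ref{itd019}), which gives $nP^-=P^-$ and $mP^-=P^-$, with Lemma~\ref{prl2-06} in the special case $r=u=0,\ s=1$, which gives $a_tP^-=e^{2t}P^-$.

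For (1), from $a_t n P^- = e^{2t}P^-$ I get $(k a_t n P^-\,|\,E_1)=e^{2t}(kP^-\,|\,E_1)$. Since $k\in K=(\mathrm{F}_{4(-20)})_{E_1}$ by (\ref{itd008}), $k$ fixes $E_1$, and since every element of $\mathrm{F}_{4(-20)}$ preserves the inner product, $(kP^-\,|\,E_1)=(kP^-\,|\,kE_1)=(P^-\,|\,E_1)$. Evaluating (\ref{itd001}) on $P^-=-E_1+E_2+F_3^1(1)$ and $E_1$ gives $(P^-\,|\,E_1)=-1$, so $(k a_t n P^-\,|\,E_1)=-e^{2t}$. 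Part (2) is structurally identical: $k_{\epsilon}\in K_{\epsilon}=(\mathrm{F}_{4(-20)})_{E_2}$ by (\ref{itd020}) fixes $E_2$, hence $(k_{\epsilon} a_t n P^-\,|\,E_2)=e^{2t}(P^-\,|\,E_2)$, and (\ref{itd001}) yields $(P^-\,|\,E_2)=1$, giving the claimed $e^{2t}$.

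For (3) I would first absorb the factors $m,a_t,n$ with the building block, obtaining $z m a_t n P^- = e^{2t}\,zP^-$, so that $(z m a_t n P^-\,|\,\sigma P^-)=e^{2t}(zP^-\,|\,\sigma P^-)$. Here the acting element $z\in N^-$ does not fix $P^-$, so invariance must be routed through the second slot: by Lemma~\ref{prl2-08}, $N^-\subset MN^-=(\mathrm{F}_{4(-20)})_{\sigma P^-}$, hence $z\,\sigma P^-=\sigma P^-$, and invariance gives $(zP^-\,|\,\sigma P^-)=(zP^-\,|\,z\,\sigma P^-)=(P^-\,|\,\sigma P^-)$. Finally, using $\sigma P^-=-P^+=h^1(-1,1,0;0,0,-1)$ together with (\ref{itd001}) yields $(P^-\,|\,\sigma P^-)=1+1+2=4$, so $(z m a_t n P^-\,|\,\sigma P^-)=4e^{2t}$.

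The computations are all short and there is no serious obstacle; the only point that demands care is the direction in which invariance is applied in (3), where the acting element $z$ fixes the second argument $\sigma P^-$ rather than the first, so one must pair $zP^-$ against $z\,\sigma P^-$ (not against $zP^-$). The other place to watch is the sign bookkeeping in the three evaluations of (\ref{itd001}) — in particular the $-2(x_3|y_3)$ term, which is what produces the $+2$ contributing to $(P^-\,|\,\sigma P^-)=4$.
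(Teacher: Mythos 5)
Your proposal is correct and follows essentially the same route as the paper: reduce $ma_tnP^-$ (resp.\ $a_tnP^-$) to $e^{2t}P^-$ via Lemma~\ref{prl2-07}, then transport the remaining factor across the inner product using the stabilizer identifications (\ref{itd008}), (\ref{itd020}), and Lemma~\ref{prl2-08}. The only difference is cosmetic — you pair $zP^-$ against $z\,\sigma P^-$ where the paper writes $(zX|\sigma P^-)=(X|z^{-1}\sigma P^-)$ — and your explicit evaluations of $(P^-|E_1)=-1$, $(P^-|E_2)=1$, $(P^-|\sigma P^-)=4$ from (\ref{itd001}) are correct.
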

\begin{proof} From (\ref{itd008}), (\ref{itd020}),
Lemmas~\ref{prl2-07} and \ref{prl2-08},
it follows that
\begin{gather*}
(k a_t n P^-|E_1)=(a_t n P^-|k^{-1}E_1)
=e^{2 t}(P^-|E_1)=-e^{2 t},\\
(k_{\epsilon} a_t n P^-|E_2)
=(a_t n P^-|k_{\epsilon}^{-1}E_2)
=e^{2 t}(P^-|E_2)=e^{2 t},\\
(z m a_t n P^-|\sigma P^-)
=(m a_t n P^-|z^{-1} \sigma P^-)
=e^{2 t}(P^-|\sigma P^-)=4 e^{2 t}.
\end{gather*}
\end{proof}
\medskip

\begin{lemma}\label{prl2-11}
$M =
(\mathrm{F}_{4(-20)})_{P^-,E_j}
=(\mathrm{F}_{4(-20)})_{P^-,\sigma P^-}$
with $j \in \{1,2\}$.
\end{lemma}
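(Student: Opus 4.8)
We must show
\[
M = (\mathrm{F}_{4(-20)})_{P^-,E_j} = (\mathrm{F}_{4(-20)})_{P^-,\sigma P^-}
\qquad (j \in \{1,2\}).
\]
Let me plan the proof.

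Let me think about what we know. We have $M = \mathrm{Spin}(7) = (\mathrm{F}_{4(-20)})_{E_1,E_2,E_3,F_3^1(1)}$ by (\ref{itd011}), and we have the key description $(\mathrm{F}_{4(-20)})_{P^-} = N^+M = MN^+$ from (\ref{itd019}). Also $(\mathrm{F}_{4(-20)})_{\sigma P^-} = MN^-$ from Lemma~\ref{prl2-08}.

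Let me consider the approach for each equality.

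**Plan.** My overall strategy is to use the two already-established stabilizer descriptions (\ref{itd019}) and Lemma~\ref{prl2-08} together with the direct-sum / decomposition facts, and reduce each pointwise stabilizer of a pair to an intersection that collapses to $M$.

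For the second equality $(\mathrm{F}_{4(-20)})_{P^-,\sigma P^-} = M$: by definition the pointwise stabilizer of the pair is the intersection $(\mathrm{F}_{4(-20)})_{P^-} \cap (\mathrm{F}_{4(-20)})_{\sigma P^-} = (MN^+) \cap (MN^-)$. The containment $M \subset$ both is clear since $M$ fixes $P^-$ (by (\ref{itd019})) and fixes $\sigma P^-$ (since $M$ normalizes things and $\sigma M = M\sigma$, so $M$ fixes $\sigma P^-$ too). For the reverse inclusion I would take $g = m_1 n^+ = m_2 z$ with $m_1,m_2 \in M$, $n^+ \in N^+$, $z \in N^-$, and deduce $m_2^{-1}m_1 = z (n^+)^{-1}$, i.e. an element of $M$ equals an element of $N^- \cdot N^+$. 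The cleanest way to conclude is to invoke the Bruhat-type uniqueness / direct-sum decomposition $\mathfrak{g} = \mathfrak{n}^- \oplus \mathfrak{a} \oplus \mathfrak{m} \oplus \mathfrak{n}^+$ at the Lie-algebra level together with the fact that $N^-, M, N^+$ meet only in the identity in the relevant slots; concretely I expect $M \cap N^\pm = \{1\}$ (since $M$ is compact — it is $\mathrm{Spin}(7)$ — while $N^\pm$ is a nontrivial unipotent group with no nontrivial compact subgroups), which forces $n^+ = 1$ and $z = 1$, hence $g \in M$.

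For the first equality $(\mathrm{F}_{4(-20)})_{P^-,E_j} = M$: again $M \subset (\mathrm{F}_{4(-20)})_{P^-,E_j}$ is immediate, since $M = (\mathrm{F}_{4(-20)})_{P^-,E_j}$ for $j\in\{1,2\}$ would follow once we know $M$ fixes $E_j$ — and indeed $M \subset \mathrm{D}_4 = (\mathrm{F}_{4(-20)})_{E_1,E_2,E_3}$ fixes all of $E_1,E_2,E_3$ by (\ref{itd011}), while $M$ fixes $P^-$ by (\ref{itd019}). For the reverse inclusion I would write $(\mathrm{F}_{4(-20)})_{P^-,E_j} = (\mathrm{F}_{4(-20)})_{P^-} \cap (\mathrm{F}_{4(-20)})_{E_j} = MN^+ \cap (\mathrm{F}_{4(-20)})_{E_j}$. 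Take $g = mn^+$ fixing $E_j$; then $n^+ = m^{-1}g$, and I would use the explicit action of $N^+ = \exp(\mathfrak{g}_\alpha \oplus \mathfrak{g}_{2\alpha})$ from the formulas (\ref{prl2002}) and (\ref{prl2003}) on $E_1$ (resp. $E_2$, reducible to $E_3$-type computations via the $\{E\}$-basis expansions) to show that a nontrivial $n^+$ moves $E_j$ off itself. Since $m$ fixes $E_j$, the equation $g E_j = E_j$ forces $n^+ E_j = E_j$; inspecting (\ref{prl2003})(iv), $\exp\mathcal{G}_1(x)E_3 = E_3 + Q^+(x) + (x|x)P^-$, and the analogous formula for $E_1$, the only solution is $x = 0$ and $p = 0$, i.e. $n^+ = 1$, so $g = m \in M$.

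**Main obstacle.** The routine containments $M \subset (\cdots)$ are painless. The genuine work is the two reverse inclusions, and the harder of the two is the first equality: I must verify that no nontrivial element of $N^+$ fixes $E_j$, which requires actually unwinding the exponential action formulas (\ref{prl2002})–(\ref{prl2003}) on the relevant idempotent and checking that the $Q^+$ and $P^-$ (or $\mathrm{Im}F_3^1$) components vanish only for the trivial parameters. A subtle point is handling $E_1$ versus $E_2$ uniformly — the formulas are stated relative to the $P^-$-adapted basis, so I would either translate $E_1, E_2$ into that basis or exploit $E_1 = 2^{-1}(E - E_3) - 2^{-1}(-E_1+E_2)$-type identities to reuse the computations. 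Once the $N^+$-rigidity is established, both equalities close immediately by the intersection argument.
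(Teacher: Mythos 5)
Your proposal has a genuine gap in the second equality. You reduce $(\mathrm{F}_{4(-20)})_{P^-,\sigma P^-}=(MN^+)\cap(MN^-)$ to the equation $m_2^{-1}m_1=z(n^+)^{-1}$ and then claim that $M\cap N^\pm=\{1\}$ "forces $n^+=1$ and $z=1$". It does not: what you have is a single element of $M$ lying in the \emph{product set} $N^-N^+$, so what you actually need is $M\cap N^-N^+=\{1\}$, equivalently $N^-\cap MN^+=\{1\}$, and the pairwise trivial intersections $M\cap N^+=\{1\}$, $M\cap N^-=\{1\}$ give you no handle on that. The Lie-algebra direct sum $\mathfrak{g}=\mathfrak{n}^-\oplus\mathfrak{a}\oplus\mathfrak{m}\oplus\mathfrak{n}^+$ does not by itself yield group-level unique factorization either. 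Worse, in this paper the fact $N^-\cap MAN^+=\{1\}$ is Lemma~\ref{prl2-12}(3), whose proof \emph{uses} the lemma you are trying to prove (via $N^-\cap MN^+\subset(\mathrm{F}_{4(-20)})_{P^-,\sigma P^-}=M$); and appealing instead to the general Bruhat uniqueness from the literature would defeat the paper's stated purpose of establishing these decompositions concretely. To close the gap along your lines you would have to compute directly that a nontrivial $z=\exp(\mathcal{G}_{-1}(x)+\mathcal{G}_{-2}(p))$ moves $P^-$ (e.g.\ by applying $\sigma$ to (\ref{prl2002})--(\ref{prl2003}) and using $\sigma P^-=2(-E_1+E_2)-P^-$), which you do not do.

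For comparison, the paper's proof avoids the stabilizer descriptions (\ref{itd019}) and Lemma~\ref{prl2-08} entirely. It exploits only the invariance of $E$, the trace, and the cross product: from the identities $\bigl((-1)^{j+1}E_j+P^-\bigr)^{\times 2}=E_3$ and $(-E_1+E_2)^{\times 2}=-E_3$ one sees that any $g$ fixing the given pair must successively fix $E_3$, the remaining $E_i$, and $F_3^1(1)$, hence lies in $M=(\mathrm{F}_{4(-20)})_{E_1,E_2,E_3,F_3^1(1)}$ by (\ref{itd011}). This is both shorter and free of the circularity issue. Your route for the \emph{first} equality (forcing $n^+E_j=E_j$ and unwinding (\ref{prl2002})--(\ref{prl2003}) to get $x=p=0$) is workable, just more computational than needed; but as written the proof of the second equality is incomplete.
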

\begin{proof} 
If $j=1$ then $k=2$, and if $j=2$ then $k=1$.
Note $P^-=-E_1+E_2+F_3^1(1)$,
$\sigma P^-=-E_1+E_2+F_3^1(-1)$,
and
$M=(\mathrm{F}_{4(-20)})_{E_1,E_2,E_3,F_3^1(1)}$.
Obviously,
$M \subset (\mathrm{F}_{4(-20)})_{P^-,E_j}$.
Conversely, fix
$g \in (\mathrm{F}_{4(-20)})_{E_j,P^-}$.
Now
$((-1)^{j+1}E_j + P^-)^{\times 2} 
=E_3$.
Then 
$g E_3 = g((-1)^{j+1}E_j + P^-)^{\times 2}
=(g((-1)^{j+1}E_j + P^-))^{\times 2}=E_3$,
and $g E_k=g (E - E_j-E_3)
=E - E_j-E_3
=E_k$.
Therefore $g E_i=E_i$ for all $i \in \{1,2,3\}$,
and $g F_3^1(1)=g(P^+ + E_1-E_2)
=P^- + E_1-E_2=F_3^1(1)$.
Then $g \in M$,
so that $(\mathrm{F}_{4(-20)})_{P^-,E_j}
\subset M$.
Thus $M=(\mathrm{F}_{4(-20)})_{P^-,E_j}$.

Obviously
$M \subset
(\mathrm{F}_{4(-20)})_{P^-,\sigma P^-}$.
Conversely,
fix $g \in (\mathrm{F}_{4(-20)})_{P^-,\sigma P^-}$.
Because of $-E_1+E_2=2^{-1}(P^- - \sigma P^-)$,
$(-E_1+E_2)^{\times 2}=-E_3$,
$F_3^1(1)=P^- - (-E_1+E_2)$, 
$E_1=2^{-1}(E-(-E_1+E_2)-E_3)$,
and $E_2=2^{-1}(E+(-E_1+E_2)-E_3)$,
we sequentially get $g (-E_1+E_2) = -E_1+E_2$,
$g E_3
=E_3$, $g F_3^1(1) = F_3^1(1)$, $g E_1=E_1$, and $g E_2=E_2$.
Thus $g \in M$, and so 
$(\mathrm{F}_{4(-20)})_{P^-,\sigma P^-}
\subset M$. Hence 
$M=(\mathrm{F}_{4(-20)})_{P^-,\sigma P^-}$.
\end{proof}
\medskip

\begin{lemma}\label{prl2-12}
Let $K'=K$ or $K_{\epsilon}$.
\begin{gather*}
\tag{1}
\mathrm{D}_4
\cap N^{\pm} = \{ 1 \}\quad (resp),\\
\tag{2}
K' \cap AN^+ = \{ 1 \},\\
\tag{3}
N^- \cap M A N^+ = \{ 1 \}.
\end{gather*}
\end{lemma}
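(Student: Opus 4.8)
The plan is to handle the three claims by first using the scalar identities of Lemma~\ref{prl2-10} to collapse the $A$-factor, thereby reducing everything to intersections of $K$, $K_\epsilon$, $M$ with $N^+$ or $N^-$, and then to settle those intersections. The one genuinely computational ingredient, which I would establish at the outset, is the following: for $n=\exp(\mathcal{G}_1(x)+\mathcal{G}_2(p))\in N^+$ with $x\in{\bf O}$ and $p\in{\rm Im}{\bf O}$, each of $nE_1=E_1$ and $nE_2=E_2$ forces $n=1$. Granting this, I would obtain $K\cap N^+=(\mathrm{F}_{4(-20)})_{E_1}\cap N^+=\{1\}$ and $K_\epsilon\cap N^+=(\mathrm{F}_{4(-20)})_{E_2}\cap N^+=\{1\}$ by (\ref{itd008}) and (\ref{itd020}).

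To prove this core assertion I would write $E_1=2^{-1}(E-(-E_1+E_2)-E_3)$ and $E_2=2^{-1}(E+(-E_1+E_2)-E_3)$ and expand $nE_j$ through (\ref{prl2002}) and (\ref{prl2003}), reading off components in the decomposition (\ref{prl2001}). The key observation is that, among all the terms so produced, the only one carrying a $Q^-$-part is the summand $Q^-(-x)$ coming from $\exp\mathcal{G}_1(x)(-E_1+E_2)$ in (\ref{prl2003})(i); hence $\{nE_j\}_{Q^-}=\pm2^{-1}x$, and $nE_j=E_j$ forces $x=0$. With $x=0$ one has $n=\exp\mathcal{G}_2(p)$, and (\ref{prl2002})(i) gives $\{nE_j\}_{{\rm Im}F_3^1}=\pm p$, forcing $p=0$ and so $n=1$. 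This tracking of components through (\ref{prl2002})--(\ref{prl2003}) is the only real obstacle; everything else is formal manipulation with the stabilizer identities already proved.

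Statement (1) is then immediate. Since $\mathrm{D}_4=(\mathrm{F}_{4(-20)})_{E_1,E_2,E_3}\subset(\mathrm{F}_{4(-20)})_{E_1}=K$, we get $\mathrm{D}_4\cap N^+\subset K\cap N^+=\{1\}$. For $N^-$, if $g\in\mathrm{D}_4\cap N^-$ then $g\in\mathrm{D}_4\subset K$ gives $\tilde{\sigma}(g)=g$, while $\tilde{\sigma}(g)\in\tilde{\sigma}(N^-)=N^+$; thus $g\in\mathrm{D}_4\cap N^+=\{1\}$. For statement (2), let $g\in K'\cap AN^+$ and write $g=a_tn$. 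If $K'=K$, reading $(gP^-|E_1)$ through Lemma~\ref{prl2-10}(1) once as $1\cdot a_tn$ and once as $g\cdot a_0\cdot 1$ (legitimate since $g\in K$) yields $-e^{2t}=-1$; if $K'=K_\epsilon$, the same with Lemma~\ref{prl2-10}(2) yields $e^{2t}=1$. Either way $t=0$, so $g=n\in K'\cap N^+=\{1\}$.

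For statement (3), let $g\in N^-\cap MAN^+$ and write both $g=ma_tn$ with $m\in M,\,n\in N^+$ and $g=z\in N^-$. Evaluating $(gP^-|\sigma P^-)$ through Lemma~\ref{prl2-10}(3), once for $g=ma_tn$ (trivial $N^-$-factor) and once for $g=z$ (trivial $M,A,N^+$-factors), gives $4e^{2t}=4$, hence $t=0$ and $g=mn\in MN^+$. By (\ref{itd019}) we have $MN^+=(\mathrm{F}_{4(-20)})_{P^-}$, so $g$ fixes $P^-$; by Lemma~\ref{prl2-08} we have $N^-\subset MN^-=(\mathrm{F}_{4(-20)})_{\sigma P^-}$, so $g$ fixes $\sigma P^-$. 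Lemma~\ref{prl2-11} then gives $g\in(\mathrm{F}_{4(-20)})_{P^-,\sigma P^-}=M$. Finally $g\in M\subset K$ forces $\tilde{\sigma}(g)=g$, and $g\in N^-$ forces $\tilde{\sigma}(g)\in N^+$, so $g\in M\cap N^+\subset M\cap AN^+=\{1\}$ by Lemma~\ref{prl2-07}; therefore $g=1$.
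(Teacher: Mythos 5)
Your proposal is correct; every step checks out against the formulas (\ref{prl2002})--(\ref{prl2003}) and the cited lemmas, and the overall skeleton (use the scalar identities of Lemma~\ref{prl2-10} to force $t=0$, then kill the unipotent part by component-tracking in the decomposition (\ref{prl2001}), transferring $N^+$ statements to $N^-$ via $\tilde{\sigma}$) is the same as the paper's. The difference lies in where the explicit computation is anchored. The paper's computational kernel is statement (1) itself: for $n\in \mathrm{D}_4\cap N^+$ it only uses that $n$ fixes $E_3$ and $-E_1+E_2$, which needs just two short expansions; statements (2) and (3) are then funneled through the stabilizer identities, namely $K'\cap N^+\subset(\mathrm{F}_{4(-20)})_{E_j,P^-}\cap N^+=M\cap N^+\subset \mathrm{D}_4\cap N^+$ via (\ref{itd019}) and Lemma~\ref{prl2-11}, and similarly $N^-\cap MN^+\subset M\cap N^-\subset \mathrm{D}_4\cap N^-$. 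Your kernel is the stronger claim $(\mathrm{F}_{4(-20)})_{E_j}\cap N^+=\{1\}$ for $j=1,2$, i.e. $K\cap N^+=K_{\epsilon}\cap N^+=\{1\}$ directly, proved by expanding $nE_j=2^{-1}(nE\mp n(-E_1+E_2)-nE_3)$ and isolating the $Q^-$- and ${\rm Im}F_3^1$-components; this costs a slightly heavier computation up front (the extra $nE_3$ term, though it contributes nothing to those components) but buys a proof of (2) that bypasses Lemma~\ref{prl2-11} and the passage through $M$ entirely. In (3) both arguments use Lemma~\ref{prl2-11} to land in $M\cap N^-$; you finish with $\tilde{\sigma}$-fixedness plus $M\cap AN^+=\{1\}$ from Lemma~\ref{prl2-07}, where the paper instead cites $M\subset \mathrm{D}_4$ and its statement (1) --- both endings are valid, and the two proofs are of essentially equal depth.
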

\begin{proof}
(1)
Fix $n \in \mathrm{D}_4 \cap N^+$.
Then $n \in \mathrm{D}_4
\subset (\mathrm{F}_{4(-20)})_{E_3,-E_1+E_2}$.
Now, 
$n=\exp  \mathcal{G}_1(x)  \exp \mathcal{G}_2(p) $
for some $x \in {\bf O}$ and $p \in \mathrm{Im} {\bf O}$.
Using (\ref{prl2002}) and (\ref{prl2003}),
$E_3=n E_3 = \exp  \mathcal{G}_1(x)  E_3 
=E_3 + Q^+(x) + (x|x) P^-$. 
Then  $x=0$ by (\ref{prl2001}).
Therefore from (\ref{prl2002}), $-E_1+E_2=n (-E_1+E_2)
=\exp \mathcal{G}_2(p)  (-E_1 + E_2)= (-E_1 + E_2)
+ F_3^1(-2 p) + (p|p)P^-$.
Then $p=0$.
Thus $n = 1$, and 
$\mathrm{D}_4
\cap N^+ = \{ 1 \}$. 
Because of $\mathrm{D}_4 \subset K=
(\mathrm{F}_{4(-20)})^{\tilde{\sigma}}$,
$\tilde{\sigma}(\mathrm{D}_4)
=\mathrm{D}_4$.
Then
from $\tilde{\sigma}(N^+)=N^-$,
$\mathrm{D}_4
 \cap N^-=
\tilde{\sigma}(\mathrm{D}_4
 \cap N^+)=\{ 1 \}$.

(2)
Take $j=1$ if $K'=K$,
and $j=2$ if $K'=K_{\epsilon}$.
Suppose $k'=a_t n$
for some $k \in K'$, $t \in \mathbb{R}$, and $n \in N^+$.
Using Lemma~\ref{prl2-10}(1)(2),
(\ref{itd008}), and (\ref{itd020}),
$(-1)^j e^{2t}=(a_t n P^-|E_j)
=(P^-|k'^{-1}E_j)=(P^-|E_j)
=(-1)^j$. 
Therefore $t=0$, and $K' \cap A N^+ \subset K' \cap N^+$.
Next, using
(\ref{itd008}), (\ref{itd020}), and (\ref{itd019}),
$K' \cap N^+ \subset
(\mathrm{F}_{4(-20)})_{E_j,P^-}$,
and
from Lemma~\ref{prl2-11}, 
$K' \cap N^+ \subset
(\mathrm{F}_{4(-20)})_{E_j,P^-}
\cap N^+=
M \cap N^+$.
Therefore because of  $M
\subset \mathrm{D}_4$
and (1),
$\{1\}\subset K' \cap A N^+ \subset K' \cap N^+
\subset M \cap N^+
\subset \mathrm{D}_4 \cap N^+=\{1\}$.
Hence $K' \cap AN^+=\{1\}$.

(3)
Suppose $z=m a_t n$
for some $z \in N^-$, $m \in M$, $t \in \mathbb{R}$, and $n \in N^+$.
Using Lemmas~\ref{prl2-10}(3)
and \ref{prl2-08},
$4 e^{2t}=(m a_t n P^-|\sigma P^-)
=(P^-|z^{-1}\sigma P^-)=(P^-|\sigma P^-)
=4$. 
Therefore $t=0$, 
so that
 $N^- \cap M A N^+ \subset N^- \cap M N^+$.
Next, 
using (\ref{itd019}) and Lemma~\ref{prl2-08},
$N^- \cap M N^+ \subset
(\mathrm{F}_{4(-20)})_{P^-,\sigma P^-}$,
and from Lemma~\ref{prl2-11},
$N^- \cap M N^+ \subset
N^- \cap
(\mathrm{F}_{4(-20)})_{P^-,\sigma P^-}
= N^- \cap M$.
Therefore because of $M
\subset \mathrm{D}_4$ and (1),
$\{1 \}\subset N^- \cap M A N^+ \subset N^- \cap M N^+
\subset M 
\cap N^- \subset
\mathrm{D}_4 \cap N^-
=\{1\}$.
Hence $N^- \cap M A N^+=\{1\}$.
\end{proof}
\medskip

\begin{lemma}\label{prl2-13}
{\rm (1)} If $k a_t n = k' a_s n'$ with
$k, k' \in K$, $t,s \in \mathbb{R}$, and $n,n' \in N^+$
then $k=k'$, $t=s$, and $n=n'$.
\smallskip

\noindent 
{\rm (2)} If $k_{\epsilon} a_t n = k_{\epsilon}' a_s n'$ 
with
$k_{\epsilon}, k_{\epsilon}' \in K_{\epsilon}$, 
$t,s \in \mathbb{R}$, and $n,n' \in N^+$
then $k_{\epsilon}=k_{\epsilon}'$, $t=s$, and $n=n'$.
\smallskip

\noindent
{\rm (3)}
If $z m a_t n = z' m' a_s n'$ with
$z, z' \in N^-$, $m,m' \in M$,
$t,s \in \mathbb{R}$, and $n,n' \in N^+$
then $z=z'$, $m=m'$, $t=s$, and $n=n'$.
\end{lemma}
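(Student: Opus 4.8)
The plan is to prove all three uniqueness statements by the same mechanism: reduce each equation to an intersection of two subgroups that has already been shown to be trivial, peeling off the factors one group at a time from the outside in.

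For (1) and (2) I would treat $K'=K$ and $K'=K_{\epsilon}$ simultaneously, since Lemma~\ref{prl2-12}(2) gives $K'\cap AN^+=\{1\}$ in both cases. Suppose $k a_t n = k' a_s n'$ with $k,k'\in K'$. Rewriting this as $k'^{-1}k=(a_s n')(a_t n)^{-1}$ and recalling from Lemma~\ref{prl2-09}(2) that $AN^+$ is a subgroup, the right-hand side lies in $AN^+$ while the left-hand side lies in $K'$; hence $k'^{-1}k\in K'\cap AN^+=\{1\}$ and $k=k'$. Cancelling $k$ leaves $a_t n=a_s n'$, that is $a_{t-s}=n'n^{-1}$. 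Here I would use $A\cap N^+\subset A\cap MN^+=\{1\}$, which follows from Lemma~\ref{prl2-07} since $N^+\subset MN^+$; this forces $n=n'$ and $a_{t-s}=1$. To pass from $a_{t-s}=1$ to $t=s$ I would invoke Lemma~\ref{prl2-10}(1) (with $k=n=1$), giving $-e^{2(t-s)}=(a_{t-s}P^-|E_1)=(P^-|E_1)=-1$, so $t=s$.

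For (3) the same idea applies with one extra outer layer. Given $z m a_t n = z' m' a_s n'$, I would first isolate the $N^-$-factors: rewriting as $z'^{-1}z=(m'a_s n')(m a_t n)^{-1}$ and using that $MAN^+$ is a subgroup (Lemma~\ref{prl2-09}(3)), the right-hand side lies in $MAN^+$, so $z'^{-1}z\in N^-\cap MAN^+=\{1\}$ by Lemma~\ref{prl2-12}(3), whence $z=z'$. Cancelling reduces the problem to $m a_t n=m' a_s n'$; writing $m'^{-1}m=(a_s n')(a_t n)^{-1}\in AN^+$ and applying $M\cap AN^+=\{1\}$ from Lemma~\ref{prl2-07} yields $m=m'$. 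The residual identity $a_t n=a_s n'$ is then handled exactly as in (1), giving $t=s$ and $n=n'$.

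The argument involves no genuine obstacle, since every required intersection ($K'\cap AN^+$, $N^-\cap MAN^+$, $M\cap AN^+$, $A\cap MN^+$) and every relevant subgroup property ($AN^+$ and $MAN^+$ being groups) is already in hand. The only points demanding a little care are the derivation of $A\cap N^+=\{1\}$ from $A\cap MN^+=\{1\}$, and the fact that $t\mapsto a_t$ is injective, for which Lemma~\ref{prl2-10}(1) is the cleanest tool; and, in (3), the necessity of removing the factors in the correct order---$N^-$ first, then $M$, then $A$ and $N^+$---so that each step lands in a subgroup whose trivial intersection has been established.
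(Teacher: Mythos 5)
Your proof is correct and follows essentially the same route as the paper: peel off $K'$ (resp.\ $N^-$, then $M$) using the subgroup properties of $AN^+$ and $MAN^+$ from Lemma~\ref{prl2-09} together with the trivial intersections of Lemma~\ref{prl2-12} and Lemma~\ref{prl2-07}, then finish with $A\cap N^+\subset A\cap MN^+=\{1\}$ and the injectivity of $t\mapsto a_t$. The only cosmetic difference is that you justify $t=s$ via Lemma~\ref{prl2-10}(1), whereas the paper cites Lemma~\ref{prl2-07} directly (whose own proof uses the same evaluation on $P^-$).
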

\begin{proof} (1) 
From Lemma~\ref{prl2-09}(2),
$(a_s n')(a_t n)^{-1} \in A N^+$,
so that 
$k'^{-1}k = (a_s n')(a_t n)^{-1} \in K \cap A N^+$.
Using Lemma~\ref{prl2-12}(2),
$k=k'$
and
$a_t n = a_s n'$.
Next, because of
$a_s^{-1} a_t = n n'^{-1} \in A \cap N^+$
and 
Lemma~\ref{prl2-07},
$a_t = a_s \Leftrightarrow t=s$
and
$n=n'$.
Hence we obtain (1).
Similarly,
substituting $K$ for $K_{\epsilon}$,
we obtain (2).

(3) By Lemma~\ref{prl2-09}(3),
$(m' a_s n')(m a_t n)^{-1} \in M A N^+$,
so that 
$z'^{-1}z = (m' a_s n')(m a_t n)^{-1} \in N^- \cap M A N^+$.
Using Lemma~\ref{prl2-12}(3),
$z=z'$
and
$m a_t n =m'  a_s n'$.
Next, by Lemma~\ref{prl2-09}(2),
$(a_s n')(a_t n)^{-1} \in A N^+$,
so that 
$m'^{-1}m = (a_s n')(a_t n)^{-1} \in M \cap A N^+$.
Using
Lemma~\ref{prl2-07},
$m=m'$
and
$a_t n =a_s n'$.
Last, because of
$a_s^{-1} a_t = n n'^{-1} \in A \cap N^+$
and 
Lemma~\ref{prl2-07},
$a_t = a_s \Leftrightarrow t=s$
and
$n=n'$.
Hence  we obtain (3).
\end{proof}
\medskip

\begin{lemma}\label{prl2-14}
{\rm (1)} For any $X \in \mathcal{H}$
and $Y \in \mathcal{N}_1^-$,
$(X|Y) < 0$.
\smallskip

\noindent
{\rm (2)} For any $X \in \mathcal{H'}$
and $Y \in \mathcal{N}_1^-$,
$(X|Y) \geq 0$.
\smallskip

\noindent
{\rm (3)} 
For any $X,Y \in \mathcal{N}_1^-$,
$(X|Y) \geq 0$.
Moreover,
$(X|Y)=0$ if and only if $X=s Y$ for some $s>0$.
\end{lemma}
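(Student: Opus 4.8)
The plan is to exploit the fact, recorded in (\ref{itd004}), (\ref{itd005}) and (\ref{itd007}), that each of $\mathcal{H}$, $\mathcal{H'}$ and $\mathcal{N}_1^-$ is a single $\mathrm{F}_{4(-20)}$-orbit, together with the $\mathrm{F}_{4(-20)}$-invariance of the inner product $(\,\cdot\,|\,\cdot\,)$. Writing $X=gE_1$ in (1), $X=gE_2$ in (2), and $Y=gP^-$ in (3) for a suitable $g\in\mathrm{F}_{4(-20)}$, and then moving $g$ across the inner product, all three statements reduce to understanding $(E_1|Z)$, $(E_2|Z)$ and $(P^-|Z)$ for an \emph{arbitrary} $Z\in\mathcal{N}_1^-$; here I use that $\mathcal{N}_1^-$ is $\mathrm{F}_{4(-20)}$-invariant, so that $g^{-1}Y$ (resp.\ $g^{-1}X$) again lies in $\mathcal{N}_1^-$. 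Throughout I would write $Z=\sum_{i=1}^3(\xi_iE_i+F_i^1(x_i))$. By (\ref{itd001}) one has $(E_1|Z)=\xi_1$, $(E_2|Z)=\xi_2$ and $(P^-|Z)=-\xi_1+\xi_2-2(1|x_3)$, while the definition of $\mathcal{N}_1^-$ gives $\mathrm{tr}(Z)=\xi_1+\xi_2+\xi_3=0$, $\xi_1=(E_1|Z)<0$ and $Z^{\times2}=0$.

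Part (1) is then immediate: since $g^{-1}Y\in\mathcal{N}_1^-$, its defining condition gives $(X|Y)=(E_1|g^{-1}Y)<0$. For (2) I would read off the $E_1$-, $E_2$- and $E_3$-components of $Z^{\times2}=0$ from (\ref{itd002}), obtaining
\[\xi_2\xi_3=(x_1|x_1),\qquad \xi_3\xi_1=-(x_2|x_2),\qquad \xi_1\xi_2=-(x_3|x_3),\]
each right-hand side being, up to sign, a non-negative octonion norm. Since $\xi_1<0$, the last relation forces $\xi_2=(E_2|Z)\ge0$, which is exactly (2).

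For the inequality in (3), the same three relations together with $\xi_1<0$ give $\xi_2\ge0$ and $\xi_3\ge0$, and in particular $(x_3|x_3)=-\xi_1\xi_2$. Applying Cauchy--Schwarz in the form $|(1|x_3)|\le\sqrt{(1|1)(x_3|x_3)}=\sqrt{-\xi_1\xi_2}$ to the expression for $(P^-|Z)$ above yields
\[(P^-|Z)\ge -\xi_1+\xi_2-2\sqrt{-\xi_1\xi_2}=(\sqrt{-\xi_1}-\sqrt{\xi_2})^2\ge0,\]
which is legitimate because $-\xi_1>0$ and $\xi_2\ge0$. This establishes $(X|Y)=(P^-|Z)\ge0$.

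The hard part will be the equality clause of (3), since it requires both inequalities above to be tight simultaneously, and this is where the bookkeeping becomes delicate. Tightness in Cauchy--Schwarz forces $x_3$ to be a non-negative real scalar, hence $x_3=\sqrt{-\xi_1\xi_2}$; tightness of the square forces $\xi_2=-\xi_1$, whence $\xi_3=0$ from $\mathrm{tr}(Z)=0$, and then $x_1=x_2=0$ from the first two relations. Substituting back gives $Z=\xi_2(-E_1+E_2+F_3^1(1))=\xi_2P^-$ with $\xi_2=-\xi_1>0$, i.e.\ $Z=sP^-$ for some $s>0$. Conversely, $(P^-|P^-)=0$ together with invariance gives $(Y|Y)=0$, so $X=sY$ implies $(X|Y)=0$. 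Transporting $Z=sP^-$ back by $g$ converts it into $X=sY$, completing the proof; I expect parts (1) and (2) to be essentially automatic once the orbit reduction is in place, with only this last equality analysis demanding care.
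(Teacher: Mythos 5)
Your proposal is correct and follows essentially the same route as the paper: reduce to a single element of each orbit via (\ref{itd004}), (\ref{itd005}), (\ref{itd007}) and the invariance of the inner product, then exploit the diagonal components of $Z^{\times 2}=0$ together with the Schwarz inequality applied to $(1|x_3)$. The only differences are stylistic — you argue (2) and (3) directly where the paper argues by contradiction, and you settle the equality case of (3) by tightness in both inequalities rather than via the decomposition (\ref{prl2001}) — but the ingredients and conclusions coincide.
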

\begin{proof} 
(1)  Using (\ref{itd004}),
$X= g E_1$ for some
$g \in \mathrm{F}_{4(-20)}$.
Then 
from (\ref{itd007}),
$g^{-1} Y\in \mathcal{N}_1^-$,
and from the definition of
$\mathcal{N}_1^-$, 
we obtain that
$(X|Y)=(g E_1|Y)
=(E_1|g^{-1} Y)<0$.

(2) Suppose that $c=(X|Y)<0$.
Using (\ref{itd005}),
$X= g E_2$ for some
$g \in \mathrm{F}_{4(-20)}$.
Put $Z=g^{-1} Y$.
From (\ref{itd007}),
$Z \in \mathcal{N}_1^-$.
Now, because of $c=(g E_2|Y)=(E_2|Z)$,
$Z=h^1(\xi_1,c,\xi_3;x_1,x_2,x_3)$
for some $\xi_i\in \mathbb{R}$ and $x_i\in {\bf O}$.
Because of $Z \in \mathcal{N}_1^-$,
$\xi_1=(E_1|Z)< 0$ and
$\xi_1c+(x_3|x_3)=(Z^{\times 2})_{E_3}=0$.
Then 
$0=\xi_1c+(x_3|x_3)>0$,
and it is a contradiction.
Thus $c \geq 0$, and so (2) follows.

(3) 
Suppose that $(X|Y)<0$.
Using (\ref{itd007}),
$Y= g P^-$ for some
$g \in \mathrm{F}_{4(-20)}$.
Put $Z=g^{-1} X$.
From (\ref{itd007}),
$Z \in \mathcal{N}_1^-$.
Set 
$Z=\sum_{i=1}^3(\eta_i E_i + F_i^1(y_i))$
with $\eta_i \in \mathbb{R}$ and $y_i \in {\bf O}$,
and put $r=(y_3|1)$.
Then $-\eta_1+\eta_2-2r=(Z|P^-)=(X|Y)<0$.
Because of $Z \in \mathcal{N}_1^-$,
$\eta_1=(E_1|Z)<0$ and
$\eta_1\eta_2+(y_3|y_3)=(Z^{\times 2})_{E_3}=0$.
Then $\eta_1\eta_2=-(y_3|y_3)\leq 0$.
Therefore from $\eta_1<0$, 
$\eta_2 \geq 0$,
so that
$2r>\eta_2-\eta_1>0$.
Now, using Schwarz inequality,
$r^2 =(y_3|1)^2 \leq (y_3|y_3)(1|1)=(y_3|y_3)$.
Therefore because of
$\eta_1\eta_2+(y_3|y_3)=0$,
$4r^2>(\eta_2-\eta_1)^2=(\eta_2-\eta_1)^2
+4\bigl(\eta_1\eta_2+(y_3|y_3)\bigr)
=(\eta_2+\eta_1)^2+4(y_3|y_3)
\geq (\eta_2+\eta_1)^2+4r^2 \geq 4r^2$.
It is a contradiction, and so $(X|Y) \geq 0$.

If $X=s Y$ then $(X|Y)=0$. 
Conversely, suppose that $(X|Y)=0$.
Using (\ref{itd007}),
$Y= g P^-$ for some
$g \in \mathrm{F}_{4(-20)}$.
Put $Z=g^{-1} X$.
From (\ref{itd007}), 
$Z \in \mathcal{N}_1^-$.
Because of $(Z|P^-)=(X|Y)=0$
and Lemma~\ref{prl2-01}(1),
$\{Z\}_{-E_1+E_2}=0$.
Then
by (\ref{prl2001}), 
$Z=sP^-+uE+vE_3
+F_3^1(p)+Q^+(x)+Q^-(y)$
for some $u,v\in\mathbb{R}$,
$p\in{\rm Im}{\bf O}$,
and $x,y \in {\bf O}$.
Setting $z=x+y$ and
$w=\overline{x}-\overline{y}$,
$Z=sP^-+uE+vE_3
+F_3^1(p)+F_1^1(z)+F_2^1(w)$.
Now, because of $Z \in\mathcal{N}_1^-$, 
$u^2+(p|p)=(Z^{\times 2})_{E_3}
=0$ and 
$3 u+v=\mathrm{tr}(Z)
=0$.
Then $u=p=v=0$,   
and $Z=
sP^-+F_1^1(z)+F_2^1(w)$.
Again, because of $Z\in\mathcal{N}_1^-$,
$-s=(Z|E_1)<0$,
$-(z|z)=(Z^{\times 2})_{E_1}=0$, and
$(w|w)=(Z^{\times 2})_{E_2}=0$.
Thus $Z=s P^-$ with $s>0$.
Therefore, multiplying $g$ from left,
$X=s Y$.
Hence we obtain (3).
\end{proof}
\medskip

\begin{lemma}\label{prl2-15}
{\rm (1)}
$\mathcal{H}=\mathcal{H}^{P^-}_{<0}
=\mathcal{H}^{P^-}_{\ne 0}$.
\smallskip

\noindent
{\rm (2)}
$\mathcal{H'}=\mathcal{H'}^{P^-}_{>0}
\coprod \mathcal{H'}^{P^-}_{=0}$.
Especially,
$\mathcal{H'}^{P^-}_{>0}
= \mathcal{H'}^{P^-}_{\ne 0}$.
\smallskip

\noindent
{\rm (3)}
$\mathcal{N}_1^-=(\mathcal{N}_1^-)^{E_1}_{<0}
=(\mathcal{N}_1^-)^{E_1}_{\ne 0}$.

\noindent
{\rm (4)}
$\mathcal{N}_1^-=(\mathcal{N}_1^-)^{E_2}_{>0}
\coprod (\mathcal{N}_1^-)^{E_2}_{=0}$.
Especially,
$(\mathcal{N}_1^-)^{E_2}_{>0}
= (\mathcal{N}_1^-)^{E_2}_{\ne 0}$.
\smallskip

\noindent
{\rm (5)}
$\mathcal{N}_1^-=
(\mathcal{N}_1^-)^{\sigma P^-}_{>0}
\coprod (\mathcal{N}_1^-)^{\sigma P^-}_{=0}$.
Especially,
$(\mathcal{N}_1^-)^{\sigma P^-}_{>0}
= (\mathcal{N}_1^-)^{\sigma P^-}_{\ne 0}$.
Furthermore,
$(\mathcal{N}_1^-)^{\sigma P^-}_{=0}
= \{s (\sigma P^-)|~s>0\}$.
\end{lemma}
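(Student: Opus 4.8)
The plan is to deduce all five identities from the sign estimates of Lemma~\ref{prl2-14}, using three membership facts together with the fact that $\mathcal{N}_1^-$ is a cone. Specifically, $P^-\in\mathcal{N}_1^-$ and $\sigma P^-\in\mathcal{N}_1^-$ because both lie in the orbit $\mathcal{N}_1^-=\mathrm{F}_{4(-20)}\cdot P^-$ of (\ref{itd007}), while $E_2\in\mathcal{H'}$ by (\ref{itd005}). I would also record that for $X\in\mathcal{N}_1^-$ and $r>0$ one has $(rX)^{\times 2}=r^2X^{\times 2}=0$, $\mathrm{tr}(rX)=0$ and $(E_1|rX)=r(E_1|X)<0$, so $rX\in\mathcal{N}_1^-$. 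Throughout I use that $(\,\cdot\,|\,\cdot\,)$ is symmetric.

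For (1) and (2) I would set $Y=P^-$ and read off the sign of $(P^-|X)$. If $X\in\mathcal{H}$, then Lemma~\ref{prl2-14}(1) gives $(P^-|X)<0$, whence $\mathcal{H}\subset\mathcal{H}^{P^-}_{<0}$; the reverse inclusion is definitional, and the chain $\mathcal{H}^{P^-}_{<0}\subset\mathcal{H}^{P^-}_{\ne 0}\subset\mathcal{H}$ yields the triple equality. If $X\in\mathcal{H'}$, then Lemma~\ref{prl2-14}(2) gives $(P^-|X)\geq 0$, exhibiting $\mathcal{H'}$ as $\mathcal{H'}^{P^-}_{>0}\coprod\mathcal{H'}^{P^-}_{=0}$; since no element has $(P^-|X)<0$, the set $\mathcal{H'}^{P^-}_{\ne 0}$ coincides with $\mathcal{H'}^{P^-}_{>0}$.

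Statement (3) is immediate from the definition of $\mathcal{N}_1^-$, which already imposes $(E_1|X)<0$; thus $\mathcal{N}_1^-=(\mathcal{N}_1^-)^{E_1}_{<0}=(\mathcal{N}_1^-)^{E_1}_{\ne 0}$ by the same chain as in (1). For (4) I would instead apply Lemma~\ref{prl2-14}(2) with the fixed vector $E_2\in\mathcal{H'}$ and arbitrary $Y\in\mathcal{N}_1^-$, obtaining $(E_2|Y)\geq 0$; this partitions $\mathcal{N}_1^-$ into $(\mathcal{N}_1^-)^{E_2}_{>0}\coprod(\mathcal{N}_1^-)^{E_2}_{=0}$, and the emptiness of the strictly negative part gives the ``especially'' claim.

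For (5) I would apply Lemma~\ref{prl2-14}(3) with $Y=\sigma P^-\in\mathcal{N}_1^-$, which gives $(\sigma P^-|X)\geq 0$ for all $X\in\mathcal{N}_1^-$ and hence the decomposition and the $\ne 0$ identity as before. The only place where the sharper part of that lemma is needed is the final equality $(\mathcal{N}_1^-)^{\sigma P^-}_{=0}=\{s(\sigma P^-)\mid s>0\}$: the equality-case characterization in Lemma~\ref{prl2-14}(3) states that $(\sigma P^-|X)=0$ holds iff $X=s(\sigma P^-)$ for some $s>0$, which describes exactly this set, while the cone property ensures each $s(\sigma P^-)$ with $s>0$ really belongs to $\mathcal{N}_1^-$ (and, taking $X=Y=\sigma P^-$ in the lemma, that $(\sigma P^-|\sigma P^-)=0$). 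No step is a genuine obstacle; the only care needed is the bookkeeping of which of the two arguments of $(\,\cdot\,|\,\cdot\,)$ ranges over $\mathcal{N}_1^-$, since (4) is the single case in which the fixed vector $E_2$ lives in $\mathcal{H'}$ rather than in $\mathcal{N}_1^-$.
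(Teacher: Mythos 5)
Your proposal is correct and follows essentially the same route as the paper: each part is read off from the sign estimates of Lemma~\ref{prl2-14} applied to the appropriate pair (with $P^-\in\mathcal{N}_1^-$, $E_1\in\mathcal{H}$, $E_2\in\mathcal{H'}$, $\sigma P^-\in\mathcal{N}_1^-$), and the equality case of Lemma~\ref{prl2-14}(3) gives the final identification in (5). The only (immaterial) deviation is in (3), where you invoke the defining inequality $(E_1|X)<0$ of $\mathcal{N}_1^-$ directly while the paper cites Lemma~\ref{prl2-14}(1) with $E_1\in\mathcal{H}$; both are valid.
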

\begin{proof} 
(1)
Because of $P^- \in \mathcal{N}_1^-$
and Lemma~\ref{prl2-14}(1), 
$(X|P^-)<0$ for all $X \in \mathcal{H}$,
and so (1) follows.

(2) Because of $P^- \in \mathcal{N}_1^-$
and Lemma~\ref{prl2-14}(2),
$(X|P^-)\geq 0$ for all $X \in \mathcal{H'}$,
and so (2) follows.

(3)
Because of $E_1 \in \mathcal{H}$
and Lemma~\ref{prl2-14}(1),
$(X|E_1)<0$ for all $X \in \mathcal{N}_1^-$,
and so (3) follows.

(4)
Because of $E_2 \in \mathcal{H'}$
and Lemma~\ref{prl2-14}(2),
$(X|E_2)\geq 0$ for all $X \in \mathcal{N}_1^-$,
and so (4) follows.

(5)
Because of $\sigma P^- \in \mathcal{N}_1^-$
and Lemma~\ref{prl2-14}(3), we obtain that
$(X|\sigma P^-)\geq 0$ for all $X \in \mathcal{N}_1^-$,
and that 
$X \in \mathcal{N}_1^-$ and
$(X|\sigma P^-)=0$ if and only if
$X
=s (\sigma P^-)$ for some $s>0$.
Thus (5) follows.
\end{proof}
\medskip

\begin{lemma}\label{prl2-16}
For $X,Y \in \mathcal{J}^1$, let 
$\mathcal{D}_{X,Y}=\{g \in \mathrm{F}_{4(-20)}|~(gX|Y)=0\}$.
Assume that
there exists $g_0 \in \mathrm{F}_{4(-20)}$
such that $(g_0X|Y)\ne 0$.
Then $\mathcal{D}_{X,Y}$ has no
interior points in $\mathrm{F}_{4(-20)}$.
Furthermore, the complement set $(\mathcal{D}_{X,Y})^c$ of
$\mathcal{D}_{X,Y}$ is an open dense submanifold of 
$\mathrm{F}_{4(-20)}$.
\end{lemma}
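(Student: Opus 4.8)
The plan is to realize $\mathcal{D}_{X,Y}$ as the zero set of a single real-analytic function and then invoke the identity principle for real-analytic functions on a connected manifold. Define $f\colon \mathrm{F}_{4(-20)} \to \mathbb{R}$ by $f(g):=(gX|Y)$. Since each $g \in \mathrm{F}_{4(-20)}$ acts $\mathbb{R}$-linearly on $\mathcal{J}^1$ and the inner product $(\cdot|\cdot)$ is $\mathbb{R}$-bilinear, $f$ is the restriction to $\mathrm{F}_{4(-20)}$ of a polynomial (hence real-analytic) function on $\mathrm{GL}_{\mathbb{R}}(\mathcal{J}^1)$; as $\mathrm{F}_{4(-20)}$ is a real-analytic Lie subgroup of $\mathrm{GL}_{\mathbb{R}}(\mathcal{J}^1)$, the map $f$ is real-analytic on $\mathrm{F}_{4(-20)}$. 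By the hypothesis $f(g_0)=(g_0X|Y)\ne 0$, so $f$ is not the zero function, and $\mathcal{D}_{X,Y}=f^{-1}(0)$.

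Next I would use connectedness. Yokota's result recalled above gives that $\mathrm{F}_{4(-20)}$ is connected, hence it is a connected real-analytic manifold. On such a manifold a real-analytic function that vanishes on some nonempty open subset must vanish identically (the identity theorem for real-analytic functions). Applying the contrapositive to $f \not\equiv 0$ shows that $f^{-1}(0)=\mathcal{D}_{X,Y}$ contains no nonempty open subset; that is, $\mathcal{D}_{X,Y}$ has no interior points in $\mathrm{F}_{4(-20)}$.

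Finally, for the remaining assertions: since $f$ is continuous, $(\mathcal{D}_{X,Y})^c=f^{-1}(\mathbb{R}\setminus\{0\})$ is open, and being an open subset of the manifold $\mathrm{F}_{4(-20)}$ it is automatically an open submanifold. Moreover $\mathcal{D}_{X,Y}=f^{-1}(0)$ is closed with empty interior, hence nowhere dense, so its complement $(\mathcal{D}_{X,Y})^c$ is dense in $\mathrm{F}_{4(-20)}$. The argument requires no computation in the octonion model; the only points needing care are the justification that $f$ is genuinely real-analytic on the group and the appeal to connectedness, both of which are available from the structure of $\mathrm{F}_{4(-20)}$ as a connected analytic matrix group established earlier. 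The main obstacle is thus simply to cite the identity theorem in the correct form, namely for real-analytic functions on a connected manifold.
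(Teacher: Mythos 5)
Your proof is correct and follows essentially the same route as the paper: both define $f(g)=(gX|Y)$, note that $f$ is real-analytic on the connected manifold $\mathrm{F}_{4(-20)}$, and apply the identity theorem to conclude that $f^{-1}(0)$ has empty interior, with openness and density of the complement following from continuity and closedness. Your added justification that $f$ is the restriction of a polynomial on $\mathrm{GL}_{\mathbb{R}}(\mathcal{J}^1)$ is a useful detail the paper leaves implicit.
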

\begin{proof}
Set the function
$f(g)=(gX|Y)$ for $g \in \mathrm{F}_{4(-20)}$.
Note that
$\mathrm{F}_{4(-20)}$ is a connected real analytic manifold,
and that
$f$ is a real analytic function.
Therefore,
if the set $f^{-1}(0)$ has some 
interior points then $f \equiv 0$ on $\mathrm{F}_{4(-20)}$.
Since $f(g_0) \ne 0$ for some
$g_0 \in \mathrm{F}_{4(-20)}$,
$f^{-1}(0)$ has no
interior points.
Therefore  $(\mathcal{D}_{X,Y})^c$ is dense,
and
since $\mathcal{D}_{X,Y}$ is a closed set,
$(\mathcal{D}_{X,Y})^c$ is an open set. 
\end{proof}
\medskip

\begin{lemma}\label{prl2-17}
The equations {\rm (\ref{itd003})} hold.
\end{lemma}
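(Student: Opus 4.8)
The plan is to write a general element $X=\sum_{i=1}^3(\xi_iE_i+F_i^1(x_i))$ and translate the defining conditions of $\mathcal{H}$ (resp.\ $\mathcal{H'}$) together with the prescribed value of $(E_1|X)$ into equations for the $\xi_i$ and $x_i$, using the explicit formulas (\ref{itd001}) and (\ref{itd002}). First I observe that by (\ref{itd001}) one has $(E_1|X)=\xi_1$, so the constraint $(E_1|X)=1$ (resp.\ $(E_1|X)=0$) simply fixes $\xi_1$, and combined with $\mathrm{tr}(X)=\xi_1+\xi_2+\xi_3=1$ it determines $\xi_2+\xi_3$. The remaining work is to extract consequences of the rank-one condition $X^{\times 2}=0$ from (\ref{itd002}).

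For (i): with $\xi_1=1$ and $\xi_2+\xi_3=0$, the $E_1$-component of $X^{\times 2}$ reads $\xi_2\xi_3-(x_1|x_1)=-\xi_2^2-(x_1|x_1)=0$; since both summands are nonnegative this forces $\xi_2=\xi_3=0$ and $x_1=0$. Then the $E_2$- and $E_3$-components give $(x_2|x_2)=0$ and $(x_3|x_3)=0$, hence $x_2=x_3=0$, so $X=E_1$. The reverse inclusion is the trivial check that $E_1$ satisfies $E_1^{\times 2}=0$, $\mathrm{tr}(E_1)=1$, and $(E_1|E_1)=1\ge 1$, so that $E_1\in\mathcal{H}$ with $(E_1|E_1)=1$.

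For (ii): with $\xi_1=0$ and $\xi_2+\xi_3=1$, the $E_2$- and $E_3$-components of $X^{\times 2}$ give $(x_2|x_2)=(x_3|x_3)=0$, so $x_2=x_3=0$ (the three $F$-components of $X^{\times 2}$ then vanish automatically), while the $E_1$-component yields $\xi_2\xi_3=(x_1|x_1)$. Thus $X=\xi_2E_2+\xi_3E_3+F_1^1(x_1)$ with $\xi_2+\xi_3=1$ and $\xi_2\xi_3=(x_1|x_1)$. To identify this set with $2^{-1}(S^8+(E_2+E_3))$, I set $\xi:=\xi_2-\xi_3$ and $x:=2x_1$; using $(\xi_2-\xi_3)^2=(\xi_2+\xi_3)^2-4\xi_2\xi_3=1-4\xi_2\xi_3$ together with $(x|x)=4(x_1|x_1)$, the single condition $\xi^2+(x|x)=1$ is equivalent to $\xi_2\xi_3=(x_1|x_1)$, and $X=2^{-1}\bigl(\xi(E_2-E_3)+F_1^1(x)+(E_2+E_3)\bigr)$. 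This is exactly the claimed equality, and the two choices $(\xi,x)=(\pm 1,0)$ recover $E_2$ and $E_3$, giving the stated inclusion.

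There is no serious obstacle here: once (\ref{itd001}) and (\ref{itd002}) are invoked, the statement reduces to linear algebra in the coefficients. The only point requiring care is the reparametrization in (ii)—verifying that the quadratic constraint coming from $X^{\times 2}=0$ matches the sphere equation defining $S^8$ via the identity $(\xi_2-\xi_3)^2=1-4\xi_2\xi_3$—and checking throughout that the membership inequalities ($(E_1|X)\ge 1$ for $\mathcal{H}$, resp.\ $(E_1|X)\le 0$ for $\mathcal{H'}$) are consistent with the imposed value of $(E_1|X)$, which they are.
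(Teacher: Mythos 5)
Your proof is correct and follows essentially the same route as the paper: expand $X$ in the basis, use $(E_1|X)=\xi_1$ and the trace condition to fix the diagonal, and read off the vanishing of the components of $X^{\times 2}$ from (\ref{itd002}); your reparametrization via $\xi=\xi_2-\xi_3$, $x=2x_1$ is just a cosmetic variant of the paper's direct parametrization $h^1(0,1/2+\xi,1/2-\xi;x_1,0,0)$ with $\xi^2+(x_1|x_1)=1/4$. No gaps.
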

\begin{proof}
Put $S_0=\{X \in\mathcal{H}|~(X|E_1)=1\}$.
Obviously, $\{E_1\}\subset S_0$.
Fix $X \in S_0$.
Because of $\mathrm{tr}(X)=1$ and $(X|E_1)=1$,
we can write
$X=h^1(1,\xi,-\xi;x_1, x_2, x_3)$
for some $\xi \in \mathbb{R}$ and $x_i \in {\bf O}$.
Because of $X^{\times 2}=0$, 
$-\xi^2-(x_1|x_1)=(X^{\times 2})_{E_1}=0$,
so that $\xi=x_1=0$.
Then $(x_i|x_i)=(X^{\times 2})_{E_i}=0$
for $i \in \{2,3\}$,
so that $x_i=0$.
Thus  $X=E_1$, and so 
(\ref{itd003})(i) follows.

Put $S_1=\{X \in\mathcal{H}'|~(X|E_1)=0\}$,
and $S_2=\{h^1(0, 1/2-\xi, 1/2+\xi;x, 0, 0)
\in \mathcal{J}^1|~
\xi^2+(x|x)=1/4\}$.
Taking $x=0$ and $\xi=\pm 1/2$,
we see $\{E_2,E_3\}\subset S_2$.
From direct calculations, $S_2 \subset S_1$.
Conversely, fix $X \in S_1$.
Because of $\mathrm{tr}(X)=1$ and $(X|E_1)=0$,
we can write $X=h^1(0, 1/2+\xi, 1/2-\xi; x_1, x_2, x_3)$
for some $\xi \in \mathbb{R}$ and $x_i \in {\bf O}$.
Then $1/4-\xi^2-(x_1|x_1)=(X^{\times 2})_{E_1}=0$
and $(x_i|x_i)=(X^{\times 2})_{E_i}=0$ with $i \in \{2,3\}$.
Therefore $X=h^1(0, 1/2+\xi, 1/2-\xi;x_1, 0, 0)$
with $\xi^2+(x_1|x_1)=1/4$,
and $X \in S_2$.
Thus $S_1 \subset S_2$, and so $S_1=S_2$.
\end{proof}

\section{The Iwasawa decomposition of 
$\mathrm{F}_{4(-20)}$.}\label{id}
Because of
$\mathcal{H}\simeq \mathrm{F}_{4(-20)}/K$,
we consider $AN^+$-orbits on $\mathcal{H}$
to give the 
Iwasawa decomposition of $\mathrm{F}_{4(-20)}$.

\begin{lemma}\label{id-01}
For all $X\in \mathcal{H}$,
\[a_{2^{-1}\log(-(P^-|X))} n_X X=E_1.\]
\end{lemma}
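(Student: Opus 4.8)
The plan is to reduce the statement to the two explicit computations already carried out in Lemma~\ref{prl2-05} and Lemma~\ref{prl2-06}, and then match coefficients. First I would check that $X$ lies in the domain where everything is defined: any $X\in\mathcal{H}$ satisfies $X^{\times 2}=0$ and $X\neq 0$, hence $X\in\mathcal{R}_1$, and Lemma~\ref{prl2-15}(1) gives $\mathcal{H}=\mathcal{H}^{P^-}_{\neq 0}$ with $(P^-|X)<0$. Thus $X\in(\mathcal{R}_1)^{P^-}_{\neq 0}$, the map $n_X$ is defined, and $t:=2^{-1}\log\bigl(-(P^-|X)\bigr)$ makes sense since $-(P^-|X)>0$.

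Next I would apply Lemma~\ref{prl2-05}, substituting the defining trace condition $\mathrm{tr}(X)=1$ of $\mathcal{H}$. Writing $c:=(P^-|X)$, the second displayed formula there becomes
\[
n_X X = 2^{-1}c\,(-E_1+E_2) + 4^{-1}\bigl(c^{-1}-c\bigr)P^- + 2^{-1}(E-E_3) \in \mathcal{J}^1(2;\mathbb{R}),
\]
that is, a vector $r(-E_1+E_2)+sP^-+u(E-E_3)$ with $r=2^{-1}c$, $s=4^{-1}(c^{-1}-c)$, and $u=2^{-1}$.

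Then I would apply $a_t$ through Lemma~\ref{prl2-06}, giving
\[
a_t\,n_X X = re^{-2t}(-E_1+E_2) + \bigl(r\sinh 2t + se^{2t}\bigr)P^- + u(E-E_3).
\]
Since $E-E_3=E_1+E_2$, the target decomposes in the basis $\{-E_1+E_2,\,P^-,\,E-E_3\}$ of $\mathcal{J}^1(2;\mathbb{R})$ (Lemma~\ref{prl2-02}(3)) as $E_1=-2^{-1}(-E_1+E_2)+2^{-1}(E-E_3)$, with no $P^-$-component. The $(E-E_3)$-coefficient already equals $u=2^{-1}$; the $(-E_1+E_2)$-coefficient requires $re^{-2t}=-2^{-1}$, i.e. $2^{-1}c\,e^{-2t}=-2^{-1}$, which forces $e^{2t}=-c$, that is $t=2^{-1}\log\bigl(-(P^-|X)\bigr)$, exactly the exponent in the statement. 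It remains to confirm that this same $t$ annihilates the $P^-$-component: substituting $e^{2t}=-c$ gives $\sinh 2t=2^{-1}(c^{-1}-c)$, and a one-line cancellation shows $r\sinh 2t+se^{2t}=4^{-1}c(c^{-1}-c)-4^{-1}c(c^{-1}-c)=0$.

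I do not expect a serious obstacle, as the argument is essentially a substitution into two previously established formulas. The only point needing care is the bookkeeping in the last step: one must recognize the correct expansion of $E_1$ in the basis of $\mathcal{J}^1(2;\mathbb{R})$ and verify that the single free parameter $t$ simultaneously pins down the $(-E_1+E_2)$-component and kills the $P^-$-component. That both happen at once is forced by the $c\leftrightarrow c^{-1}$ structure of the coefficients coming out of Lemma~\ref{prl2-05}, which is precisely what makes $a_t n_X$ normalize $X$ to $E_1$.
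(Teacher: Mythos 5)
Your proposal is correct and follows essentially the same route as the paper's proof: establish $(P^-|X)<0$ via Lemma~\ref{prl2-15}(1), reduce $n_X X$ to the form $r(-E_1+E_2)+sP^-+2^{-1}(E-E_3)$ using $\mathrm{tr}(X)=1$ and Lemma~\ref{prl2-05}, then apply Lemma~\ref{prl2-06} and check that $re^{-2t}=-2^{-1}$ and $r\sinh 2t+se^{2t}=0$. The only cosmetic difference is that you solve for $t$ from the coefficient condition rather than verifying the given $t$ directly, which amounts to the same computation.
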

\begin{proof}
Put $t=2^{-1}\log(-(P^-|X))$.
By Lemma~\ref{prl2-15}(1),
$\mathcal{H}=\mathcal{H}^{P^-}_{< 0}
=\mathcal{H}^{P^-}_{\ne 0}
\subset (\mathcal{R}_1)^{P^-}_{\ne 0}$.
Then
$(P^-|X)<0$,
and $\log(-(P^-|X))$ is well-defined.
Using
$\mathrm{tr}(X)=1$ and Lemma~\ref{prl2-05},
$n_X X=r(-E_1+E_2)+s P^-+2^{-1}(E-E_3)$
where
$r=2^{-1}(P^-|X)$ and
$s=4^{-1}((P^-|X)^{-1}-(P^-|X))$.
Because of  $re^{-2t}=-2^{-1}$,
$r \sinh 2t+se^{2t}=0$,
and 
Lemma~\ref{prl2-06},
we get
$a_t n_X X=-2^{-1}(-E_1+E_2)+2^{-1}(E_1+E_2)
=E_1$.
\end{proof}

\begin{proof}
[{\bf Proof of Main-Theorem~\ref{itd-04}}]
Using (\ref{itd004}), 
$g^{-1}E_1\in \mathcal{H}$.
Then using Lemma~\ref{id-01}
and $a_{2^{-1}\log(-(g P^-|E_1))}
=a_{2^{-1}\log(-(P^-|g^{-1}E_1))}$,
\[a_{2^{-1}\log(-(g P^-|E_1))}
n_{g^{-1}E_1}g^{-1}E_1=
E_1.\]
Put $k=a_{2^{-1}\log(-(g P^-|E_1))}
n_{g^{-1}E_1}g^{-1}$.
Then
$k\in
(\mathrm{F}_{4(-20)})_{E_1}=K$
by (\ref{itd008}), 
and
 \[g=k^{-1}a_{2^{-1}\log(-(g P^-|E_1))}n_{g^{-1}E_1}
\in KAN^+.\] 
Set
$H(g)=2^{-1}\log(-(g P^-|E_1))
\tilde{A}_3^1(1) \in
\mathfrak{a}$,
$n_I(g)=n_{g^{-1}E_1} \in N^+$, and
$k(g)=k^{-1} \in K$,
respectively.
Then $g=k(g) \exp(H(g)) n_I(g)$,
and 
it follows from Lemma~\ref{prl2-13}(1) that
$H(g)$, $n_I(g)$, and $k(g)$
are uniquely determined.
Because of 
$(P^-|g^{-1}E_1)=(g P^-|E_1)$,
$(Q^+(e_i)|g^{-1}E_1)=(g Q^+(e_i)|E_1)$, 
and $(F_3^1(e_i)|g^{-1}E_1)=(g F_3^1(e_i)|E_1)$,
we see
\begin{gather*}
n_{g^{-1}E_1}=\exp\bigr(\mathcal{G}_1(
2^{-1}(\sum{}_{i=0}^7 (g Q^+(e_i)|E_1)e_i
)
/(g P^-|E_1))\\
+\mathcal{G}_2(
-2^{-1}(\sum{}_{i=1}^7 (g F_3^1(e_i)|E_1)
e_i)/(g P^-|E_1))
\bigr).
\end{gather*}
Moreover,
$k(g)=g n_I(g)^{-1}\exp (-H(g))$.
Hence the result follows.
\end{proof}
\medskip

Set $\tilde{D}_4:=\{
(g_1,g_2,g_3) \in \mathrm{SO}(8)^3|
(g_1 x)(g_2 y)=\overline{g_3 (\overline{x y})}
~~\text{for}~x,y \in {\bf O} \}$.
From \cite[Lemma~3.2(1)]{Na2012},
the following map $\varphi_0
:\tilde{D}_4 \to \mathrm{D}_4$
is a group isomorphism;
$\varphi_0(g_1,g_2,g_3)(\sum_{i=1}^3(\xi_i E_i
+F_i^1(x_i)))
=\sum_{i=1}^3(\xi_i E_i
+F_i^1(g_i x_i))$.
From \cite[(4.5)]{Na2012}, 
for $j \in \{1,2,3\}$ and $X=\sum{}_{i=1}^3
\bigl(\xi_i E_i+F_i^1(x_i)\bigr)$,
there exists $g_0=\varphi_0(g_1,g_2,g_3) \in 
\mathrm{D}_4$
such that
\begin{align*}
\tag{\ref{id}.1}\label{id001}
g_0 X
&=\left(\sum{}_{i=1}^3\xi_iE_i\right)
+F^1_j(r_0)+\sum{}_{k=1}^2F^1_{j+k}(g_{j+k}x_{j+k})\\
&\quad
\text{with}~ r_0=\sqrt{(x_j|x_j)}\in \mathbb{R}
\end{align*}
where
the index $j+k$ is counted modulo $3$.
\medskip 

\begin{proof}
[{\bf Proof of Theorem~\ref{itd-05}}]
For all $m \in M$, $t \in \mathbb{R}$, and $n \in N^+$,
using Lemma~\ref{prl2-07},
$m a_t n [P^-]=[e^{2 t} P^-]=[P^-]$,
so that
$M A N^+ \subset (\mathrm{F}_{4(-20)})_{[P^-]}$.
Conversely, fix $g \in (\mathrm{F}_{4(-20)})_{[P^-]}$.
Then $g P^-=s P^-$ for some $s>0$.
Because of $\mathrm{F}_{4(-20)}=K A N^+$,
$g$ can be expressed by 
$g= k a_t n$
with $k \in K$,
$t \in \mathbb{R}$,
and $n \in N^+$.
From Lemma~\ref{prl2-07},
$s P^-=g P^-=k (a_t n  P^-)=e^{2 t} k P^-$.
Now,
using Lemma~\ref{prl2-10}(1),
$-s=(s P^-|E_1)=(g P^-|E_1)
=(k a_t n  P^-|E_1)=-e^{2 t}$, so that $s=e^{2 t}$.
Then $k P^- =P^-$,
and from (\ref{itd008}) and Lemma~\ref{prl2-11}, 
$k \in K \cap (\mathrm{F}_{4(-20)})_{P^-}
=(\mathrm{F}_{4(-20)})_{E_1,P^-}=M$.
Thus
$g= k a_t n \in M A N^+$,
and $(\mathrm{F}_{4(-20)})_{[P^-]}
\subset MAN^+$.
Hence $(\mathrm{F}_{4(-20)})_{[P^-]}=MAN^+$,
and 
it follows from (\ref{itd007}) 
and $\mathcal{F}=\mathcal{N}_1^-/\sim$
that
\[
\mathcal{F}=\mathrm{F}_{4(-20)}\cdot [P^-]\simeq
\mathrm{F}_{4(-20)}/(\mathrm{F}_{4(-20)})_{[P^-]}
=\mathrm{F}_{4(-20)}/MAN^+.
\]
Next, let us show 
that $K$ transitively acts on $\mathcal{F}$.
Obviously $K$ acts on $\mathcal{F}$.
Fix $[X] \in \mathcal{F}$ with
$X \in\mathcal{N}_1^-$.
Using \cite[Lemma~5.2(4)]{Na2012}, 
there exists $k_1 \in K$
such that 
$k_1 X=h^1(-\xi,\xi,0;0,0,x)$
where
$\xi>0$, $x \in {\bf O}$,
and $\xi^2-(x|x)=0$.
Using (\ref{id001}),
there exists $k_2 \in \mathrm{D}_4 \subset K$
such that
$k_2 k_1 X=h^1(-\xi,\xi,0;0,0,\xi)
=\xi P^-$.
Thus
$k_2k_1[X]
=[\xi P^-]=[P^-]$,
and so $\mathcal{F}
=K\cdot [P^-]$.
Last, from $(\mathrm{F}_{4(-20)})_{[P^-]}=MAN^+$
and Lemma~\ref{prl2-13}(1), 
$K_{[P^-]}=(\mathrm{F}_{4(-20)})_{[P^-]} \cap K=(MAN^+)
\cap K=M$.
Thus from $\mathcal{F}=K\cdot [P^-]$,
(\ref{itd008}), and (\ref{itd011}), it follows that
\[\mathcal{F} \simeq
K/K_{[P^-]}=K/M=
\mathrm{Spin}(9)/\mathrm{Spin}(7).\]
\end{proof}
\medskip

We define the quadratic space $({\bf O}^2,Q)$
by the normal linear space ${\bf O}^2={\bf O}\times {\bf O}$
and $Q(x,y):=(x|x)+(y|y)$ for $x,y \in {\bf O}$,
and $S^{15}:=\{(x,y) \in {\bf O}^2|~Q(x,y)=1\}$.

\begin{proposition}\label{id-02}
\[ \mathcal{F}\simeq S^{15}.\]
Furthermore, 
$K/M
\simeq S^{15}$.
\end{proposition}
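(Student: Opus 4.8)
The plan is to reduce everything to Theorem~\ref{itd-05}(4), which has just been proved and which gives $\mathcal{F}\simeq\mathrm{Spin}(9)/\mathrm{Spin}(7)=K/M$ (recall $\mathrm{Spin}(9)=K$ and $\mathrm{Spin}(7)=M$). Hence both assertions of the proposition follow at once once I exhibit a single diffeomorphism $\mathcal{F}\simeq S^{15}$. I would produce this by realizing $\mathcal{F}$ as an explicit cross-section of the cone $\mathcal{N}_1^-$ and then solving the defining equations of that section octonion by octonion.

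First I would normalize. By Lemma~\ref{prl2-15}(3) every $X\in\mathcal{N}_1^-$ satisfies $(E_1|X)<0$, and the relation $\sim$ is positive scaling; since $(E_1|rX)=r(E_1|X)$ runs through all of $(-\infty,0)$ as $r>0$ varies, each class $[X]$ contains exactly one representative with $(E_1|X)=-1$. The scaling action of $\mathbb{R}_{>0}$ on $\mathcal{N}_1^-$ being free and proper, the map $[X]\mapsto -X/(E_1|X)$ is then a diffeomorphism of $\mathcal{F}$ onto the global section $\mathcal{S}:=\{\,X\in\mathcal{N}_1^-:(E_1|X)=-1\,\}$.

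Next I would solve the equations cutting out $\mathcal{S}$. Writing $X=\sum_{i=1}^3(\xi_iE_i+F_i^1(x_i))$, the condition $(E_1|X)=-1$ reads $\xi_1=-1$ and $\mathrm{tr}(X)=0$ gives $\xi_2+\xi_3=1$. Reading off the components of $X^{\times 2}=0$ from (\ref{itd002}), the $E_2$- and $E_3$-components give $(x_2|x_2)=\xi_3$ and $(x_3|x_3)=\xi_2$, while the $F_1^1$-component gives $x_1=\overline{x_2x_3}$. Consequently $(x_2|x_2)+(x_3|x_3)=\xi_2+\xi_3=1$, so the pair $(x_2,x_3)$ lies on $S^{15}$. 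The remaining components ($E_1$, $F_2^1$, $F_3^1$) of $X^{\times 2}$ then vanish automatically, and this is the one genuinely computational point: it uses the composition identity $(x_2x_3|x_2x_3)=(x_2|x_2)(x_3|x_3)$ for the $E_1$-component, and the alternative laws $(x_2x_3)\overline{x_3}=x_2(x_3\overline{x_3})$, $\overline{x_2}(x_2x_3)=(\overline{x_2}x_2)x_3$ together with $x\overline{x}=(x|x)$ for the $F_2^1$- and $F_3^1$-components.

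Finally I would check that $X\mapsto(x_2,x_3)$ is a diffeomorphism $\mathcal{S}\to S^{15}$. It is injective, since on $\mathcal{S}$ the data $\xi_1=-1$, $\xi_2=(x_3|x_3)$, $\xi_3=(x_2|x_2)$, $x_1=\overline{x_2x_3}$ are all determined by $(x_2,x_3)$; and it is surjective, since given $(x_2,x_3)\in S^{15}$ the element $X$ reconstructed by those formulas satisfies $\mathrm{tr}(X)=0$, $(E_1|X)=\xi_1=-1<0$ and, by the computation of the previous paragraph, $X^{\times 2}=0$, so that $X\in\mathcal{S}$. Both the map and its inverse are polynomial, hence it is a diffeomorphism. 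Composing with the identification $\mathcal{F}\cong\mathcal{S}$ of the second step yields $\mathcal{F}\simeq S^{15}$, and combining with Theorem~\ref{itd-05}(4) gives $K/M\simeq\mathcal{F}\simeq S^{15}$. The only nontrivial step is the octonionic verification that the three remaining components of $X^{\times 2}$ vanish; everything else is bookkeeping.
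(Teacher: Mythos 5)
Your proposal is correct and is essentially the paper's own argument: normalizing to the representative with $(E_1|X)=-1$ and reading off $(x_2,x_3)$ is exactly the paper's map $g([X])=(-\xi_1^{-1}x_2,-\xi_1^{-1}x_3)$, and your reconstruction from $(x_2,x_3)\in S^{15}$ is exactly its inverse $f(x,y)=[h^1(-1,(y|y),(x|x);\overline{xy},x,y)]$, verified via the same components of $X^{\times 2}=0$ and the same appeal to Theorem~\ref{itd-05}(4) for $K/M\simeq S^{15}$. The only cosmetic difference is that you spell out the octonionic identities behind the paper's ``direct calculations'' and are slightly more explicit about the smooth-section argument.
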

\begin{proof}
Set the map $f: S^{15} \to \mathcal{F}$
as
\[f(x,y):=[h^1(-1,(y|y),(x|x);\overline{xy},x,y)]
\quad \text{for}~(x,y) \in S^{15}.\]
Put $X=h^1(-1,(y|y),(x|x);\overline{xy},x,y)$.
From direct calculations, we get $X \in \mathcal{N}_1^-$.
Therefore $f$ is well-defined.
On the other hand,  the map $g: \mathcal{F} \to S^{15}$ set
as
\[g([h^1(\xi_1,\xi_2,\xi_3;x_1,x_2,x_3)])
:=(-\xi_1^{-1}x_2,-\xi_1^{-1}x_3)
\]
for $h^1(\xi_1,\xi_2,\xi_3;x_1,x_2,x_3) \in \mathcal{N}_1^-$.
Put $X=h^1(\xi_1,\xi_2,\xi_3;x_1,x_2,x_3)$, 
$x=-\xi_1^{-1}x_2$, and $y=-\xi_1^{-1}x_3$, respectively.
Because of 
$\xi_3\xi_1+(x_2|x_2)=(X^{\times 2})_{E_2}=0$,
$\xi_1\xi_2+(x_3|x_3)=(X^{\times 2})_{E_3}=0$,
and
$\xi_1+\xi_2+\xi_3=\mathrm{tr}(X)=0$, 
we get
$Q(x,y)=\xi_1^{-2}((x_2|x_2)+(x_3|x_3))
=\xi_1^{-2}(-\xi_3\xi_1-\xi_1\xi_2)
=\xi_1^{-1}(-\xi_2-\xi_3)
=\xi_1^{-1}\xi_1=1$.
Therefore $g$ is well-defined.

Now, it follows that $g \circ f(x,y)=(x,y)$ for all $(x,y) \in S^{15}$,
so that $g\circ f=id$.
On the other hand,
fix $X=h^1(\xi_1,\xi_2,\xi_3;x_1,x_2,x_3) \in \mathcal{N}_1^-$.
From $-\overline{x_2x_3}-\xi_1x_1=(X^{\times 2})_{F_1}=0$,
$\xi_1\xi_2+(x_3|x_3)=(X^{\times 2})_{E_3}=0$,
and $\xi_3\xi_1+(x_2|x_2)=(X^{\times 2})_{E_2}=0$,
we get
$x_1=-\xi_1^{-1}(\overline{x_2x_3})$,
$\xi_2=-\xi_1^{-1}(x_3|x_3)$,
and
$\xi_3=-\xi_1^{-1}(x_2|x_2)$.
Then
because of $\mathcal{F}=\mathcal{N}_1^-/\sim$
and $\xi_1=(X|E_1)<0$,
we see
\begin{align*}
&f \circ g ([h^1(\xi_1,\xi_2,\xi_3;x_1,x_2,x_3)])\\
=&[h^1(-1,\xi_1^{-2}(x_3|x_3),\xi_1^{-2}(x_2|x_2);
\xi_1^{-2}(\overline{x_2 x_3}),-\xi_1^{-1}x_2,-\xi_1^{-1}x_3)]\\
=&[h^1(\xi_1,-\xi_1^{-1}(x_3|x_3),-\xi_1^{-1}(x_2|x_2);
-\xi_1^{-1}(\overline{x_2 x_3}),x_2,x_3)]\\
=&[h^1(\xi_1,\xi_2,\xi_3;x_1,x_2,x_3)].
\end{align*}
Therefore  $f \circ g=id$.
Hence $\mathcal{F}\simeq S^{15}$,
and from Theorem~\ref{itd-05}(4),
$K/M\simeq S^{15}$.
\end{proof}

\begin{remark}{\rm
I.~Yokota has
proved $\mathrm{Spin}(9)/\mathrm{Spin}(7) \simeq S^{15}$
(\cite[Example~5.6]{YiJ1973}, \cite{Yi1968})
by realizing
$\mathrm{Spin}(9)$ and $\mathrm{Spin}(7)$
as stabilizers
of finite points
in
the compact exceptional Lie group $\mathrm{F}_4
:=Aut_{\mathbb{R}}(\mathcal{J})$
where $\mathcal{J}$ is an exceptional Jordan algebra,
and showing that
$\mathrm{Spin}(9)$ transitively acts on $S^{15}$
embed in $\mathcal{J}$.
In Proposition~\ref{id-02},
we give the other proof 
by using $\mathcal{F}=\mathcal{N}_1^-/\sim$
where $\mathcal{N}_1^-$ is an exceptional null cone.}
\end{remark}

\section{The $K_{\epsilon}-$Iwasawa decomposition
of $\mathrm{F}_{4(-20)}$.}
\label{ke}
Because of
$\mathcal{H'} 
\simeq \mathrm{F}_{4(-20)}/K_{\epsilon}$,
we consider $AN^+$-orbits on $\mathcal{H'}$
to give the $K_{\epsilon}-$Iwasawa decomposition
of $\mathrm{F}_{4(-20)}$.

\begin{lemma}\label{ke-01}
Assume that $X \in \mathcal{H'}^{P^-}_{\ne 0}$.
Then
\[
a_{2^{-1} \log((P^-|X))} n_X X=E_2.
\]
\end{lemma}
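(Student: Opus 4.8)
The plan is to mirror the proof of Lemma~\ref{id-01} almost verbatim, the only structural difference being a single sign. First I would invoke Lemma~\ref{prl2-15}(2), which gives $\mathcal{H'}^{P^-}_{\ne 0} = \mathcal{H'}^{P^-}_{>0}$, so that for $X \in \mathcal{H'}^{P^-}_{\ne 0}$ we have $(P^-|X) > 0$. This makes $t := 2^{-1}\log((P^-|X))$ well-defined — note that there is \emph{no} minus sign inside the logarithm here, in contrast to the $\mathcal{H}$ case — and in particular exhibits $X \in (\mathcal{R}_1)^{P^-}_{\ne 0}$, so that $n_X$ is defined and Lemma~\ref{prl2-05} applies.

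Second, since $X \in \mathcal{H'}$ forces $\mathrm{tr}(X) = 1$, I would substitute this value into the explicit formula of Lemma~\ref{prl2-05} to obtain
\[
n_X X = r(-E_1+E_2) + sP^- + 2^{-1}(E-E_3)
\]
with $r = 2^{-1}(P^-|X)$ and $s = 4^{-1}((P^-|X)^{-1}-(P^-|X))$, which already lies in $\mathcal{J}^1(2;\mathbb{R}) \cap (\mathcal{R}_1)^{P^-}_{\ne 0}$. The next step is to feed this into the explicit $a_t$-action on $\mathcal{J}^1(2;\mathbb{R})$ given by Lemma~\ref{prl2-06}. With the chosen $t$ we have $e^{2t} = (P^-|X)$, so that $re^{-2t} = 2^{-1}$, and a one-line check with $\sinh 2t = 2^{-1}((P^-|X)-(P^-|X)^{-1})$ shows that the $P^-$-coefficient $r\sinh 2t + se^{2t}$ vanishes. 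Hence $a_t n_X X = 2^{-1}(-E_1+E_2) + 2^{-1}(E-E_3)$, and expanding $E = E_1+E_2+E_3$ collapses this to $E_2$, as claimed.

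The only point demanding care, and the sole place where this argument diverges from Lemma~\ref{id-01}, is the sign of $re^{-2t}$. In the $\mathcal{H}$ case one had $(P^-|X) < 0$, which produced $re^{-2t} = -2^{-1}$ and hence landed on $E_1$; here Lemma~\ref{prl2-15}(2) guarantees $(P^-|X) > 0$, flipping the scaling factor to $+2^{-1}$ and steering the result to $E_2$. I do not anticipate any genuine obstacle: once the sign of $(P^-|X)$ is pinned down, everything else is the same bookkeeping with the $\cosh/\sinh$ formula that already appears in Lemma~\ref{id-01}, and the verification that the $P^-$-term cancels is routine arithmetic.
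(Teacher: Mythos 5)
Your proposal is correct and follows the paper's own proof essentially verbatim: invoke Lemma~\ref{prl2-15}(2) to get $(P^-|X)>0$, apply Lemma~\ref{prl2-05} with $\mathrm{tr}(X)=1$, and then use Lemma~\ref{prl2-06} with $re^{-2t}=2^{-1}$ and the vanishing of the $P^-$-coefficient to land on $E_2$. The sign discussion you flag is exactly the one point where the paper's argument diverges from Lemma~\ref{id-01}, so nothing is missing.
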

\begin{proof}
Put $t=2^{-1}\log((P^-|X))$.
By Lemma~\ref{prl2-15}(2),
$\mathcal{H'}^{P^-}_{> 0}
=\mathcal{H'}^{P^-}_{\ne 0}
\subset (\mathcal{R}_1)^{P^-}_{\ne 0}$.
Then
$(P^-|X)>0$,
and $\log((P^-|X))$ is well-defined.
Using
$\mathrm{tr}(X)=1$ and Lemma~\ref{prl2-05},
$n_X X=r(-E_1+E_2)+s P^-+2^{-1}(E-E_3)$
where
$r=2^{-1}(P^-|X)$ and
$s=4^{-1}((P^-|X)^{-1}-(P^-|X))$.
Because of  $re^{-2t}=2^{-1}$,
$r \sinh 2t+se^{2t}=0$,
and 
Lemma~\ref{prl2-06},
we get
$a_t n_X X=2^{-1}(-E_1+E_2)+2^{-1}(E_1+E_2)
=E_2$.
\end{proof}
\medskip

\begin{proof}
[{\bf Proof of 
Main Theorem~\ref{itd-06}}]
Put $\mathcal{D}=\{g \in \mathrm{F}_{4(-20)}|
~(g P^-|E_2)> 0\}$.
From (\ref{itd007}) and Lemma~\ref{prl2-15}(4),
we see
$\mathcal{D}=\{g \in \mathrm{F}_{4(-20)}|
~g P^-\in (\mathcal{N}_1^-)^{E_2}_{>0}\}
=\{g \in \mathrm{F}_{4(-20)}|
~g P^-\in (\mathcal{N}_1^-)^{E_2}_{ \ne 0}\}
=\{g \in \mathrm{F}_{4(-20)}|
~(g P^-|E_2)\ne 0\}$.
Now, from Lemma~\ref{prl2-10}(2),
$K_{\epsilon} A N^+\subset \mathcal{D}$.
Conversely,
fix $g\in \mathcal{D}$.
From (\ref{itd005}),
$g^{-1}E_2 \in \mathcal{H'}$,
and
$(P^-|g^{-1}E_2)=(gP^-|E_2)>0$,
so that $g^{-1}E_2\in \mathcal{H'}^{P^-}_{> 0}$.
Using Lemma~\ref{ke-01}
 and $a_{2^{-1} \log((g P^-|E_2))}
=a_{2^{-1} \log((P^-|g^{-1} E_2))}$, 
\[a_{2^{-1} \log((g P^-|E_2))}
n_{g^{-1}E_2}
g^{-1}E_2=E_2.\]
Put $k'=
a_{2^{-1} \log((g P^-|E_2))}n_{g^{-1}E_2}
g^{-1}$.
Then
$k'\in (\mathrm{F}_{4(-20)})_{E_2}
=K_{\epsilon}$
by
(\ref{itd020}), 
and
\[\tag{*}
g=k'^{-1}a_{2^{-1} \log((g P^-|E_2))}n_{g^{-1}E_2}
\in K_{\epsilon} A N^+.\]
Thus $\mathcal{D} \subset K_{\epsilon}AN^+$,
and so $\mathcal{D} = K_{\epsilon}AN^+$.
Since the identity element $1 \in 
\mathrm{F}_{4(-20)}$ is in $\mathcal{D}$
and the complement set $\mathcal{D}^c$
is given by
$\mathcal{D}^c=\{g \in \mathrm{F}_{4(-20)}|
(g P^-|E_2)= 0\}$,
applying Lemma~\ref{prl2-16},
$\mathcal{D} = K_{\epsilon}AN^+$ is an open dense
submanifold of $\mathrm{F}_{4(-20)}$.

From (*),  we set
$H_{\epsilon}(g)
=2^{-1} \log((g P^-|E_2))\tilde{A}_3^1(1) \in
\mathfrak{a}$,
$n_{\epsilon}(g)=n_{g^{-1}E_2}
\in N^+$, and
$k_{\epsilon}(g)=k'^{-1} \in K_{\epsilon}$,
respectively.
Then we
get
$g=k_{\epsilon}(g) \exp(H_{\epsilon}(g)) n_{\epsilon}(g)$,
and 
it follows from Lemma~\ref{prl2-13}(2)
that  $H_{\epsilon}(g)$, $n_{\epsilon}(g)$,
and $k_{\epsilon}(g)$ are uniquely determined.
Since
$(X|g^{-1}Y)=(g X|Y)$ for all $X,Y \in \mathcal{J}^1$,
we see
\begin{gather*}
n_{g^{-1}E_2}
=\exp\bigl(\mathcal{G}_1(
2^{-1} (\sum{}_{i=0}^7 (g Q^+(e_i)|E_2)e_i
)
/(gP^-|E_2))\\
+\mathcal{G}_2(
-2^{-1} (\sum{}_{i=1}^7 (g F_3^1(e_i)|E_2)
e_i)/(gP^-|E_2))
\bigr).
\end{gather*}
Moreover, $k_{\epsilon}(g)
=gn_{\epsilon}(g)^{-1}\exp (-H_{\epsilon}(g))$.
Hence the result follows.
\end{proof}

\section{The Matsuki decomposition
of $\mathrm{F}_{4(-20)}$.}
\label{md}
For $X \in \mathcal{J}^1$,
we denote $L^{\times}(X) \in \mathrm{End}_{\mathbb{R}}(
\mathcal{J}^1)$ by $L^{\times}(X)Y=X\times Y$
for $Y \in \mathcal{J}^1$.
For $j \in \{1,2,3\}$ and $p,q \in \mathbb{R}$,
we denote the subspace
$(\mathcal{J}^1)^j_{p,q}$ of $\mathcal{J}^1$
by
\[(\mathcal{J}^1)^j_{p,q}:=\{
X \in \mathcal{J}^1|~
\sigma_j X=p X,\quad
L^{\times}(2 E_j) X =q X\}.\]

\begin{lemma}\label{md-01}
Let $j \in \{1,2,3\}$ and $p,q \in \mathbb{R}$.

\noindent
{\rm (1)}
For all $k \in (\mathrm{F}_{4(-20)})_{E_j}$,
\[L^{\times}(2E_j) \cdot k=k \cdot L^{\times}(2E_j).\]

\noindent
{\rm (2)} The stabilizer
$(\mathrm{F}_{4(-20)})_{E_j}$ invariants the space
$(\mathcal{J}^1)^j_{p,q}$.
\end{lemma}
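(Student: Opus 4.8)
The plan is to obtain both parts from the two invariance properties built into $\mathrm{F}_{4(-20)}$—preservation of the cross product and of the Jordan product—together with the defining relation $kE_j=E_j$.

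First I would settle part (1) by a direct computation. For $k\in(\mathrm{F}_{4(-20)})_{E_j}$ and $Y\in\mathcal{J}^1$, cross-product invariance together with $kE_j=E_j$ yields
\[k\bigl(L^{\times}(2E_j)Y\bigr)=k(2E_j\times Y)=2\,(kE_j\times kY)=2E_j\times kY=L^{\times}(2E_j)(kY),\]
so that $L^{\times}(2E_j)$ and $k$ commute as endomorphisms of $\mathcal{J}^1$.

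For part (2) I would fix $X\in(\mathcal{J}^1)^j_{p,q}$ and $k\in(\mathrm{F}_{4(-20)})_{E_j}$ and check that $kX$ still satisfies the two eigenvalue equations defining the space. The equation for $L^{\times}(2E_j)$ is immediate from part (1), since $L^{\times}(2E_j)(kX)=k\bigl(L^{\times}(2E_j)X\bigr)=q\,kX$. The equation $\sigma_j(kX)=p\,kX$ would follow as soon as I know $k\sigma_j=\sigma_j k$, because then $\sigma_j(kX)=k(\sigma_j X)=p\,kX$.

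The hard part will therefore be the commutation $k\sigma_j=\sigma_j k$, and I would establish it uniformly in $j$ by reading off $\sigma_j$ from the Jordan structure attached to $E_j$. Writing $L(E_j)Y:=E_j\circ Y$, the general Peirce decomposition for an idempotent makes $L(E_j)$ diagonalizable with eigenvalues $0,\tfrac12,1$. Comparing the matrix form $h^1$ with the explicit definition of $\sigma_j$, one sees that $\sigma_j$ acts by $+1$ on the $0$- and $1$-eigenspaces of $L(E_j)$ and by $-1$ on the $\tfrac12$-eigenspace; equivalently
\[\sigma_j=\mathrm{id}-8\,L(E_j)+8\,L(E_j)^2,\]
the quadratic taking the values $1,-1,1$ at $0,\tfrac12,1$. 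Since $k$ preserves the Jordan product and fixes $E_j$, one has $k\,L(E_j)=L(E_j)\,k$, whence $k$ commutes with every polynomial in $L(E_j)$ and in particular with $\sigma_j$. This gives $\sigma_j(kX)=p\,kX$ and completes part (2). A convenient feature of this route is that it handles $j=3$ on the same footing as $j=1,2$, where one could instead invoke the identifications $(\mathrm{F}_{4(-20)})_{E_j}=(\mathrm{F}_{4(-20)})^{\tilde{\sigma}_j}$ from (\ref{itd008}) and (\ref{itd009}) to get $\sigma_j k\sigma_j=k$ at once.
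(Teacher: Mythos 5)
Your proof is correct, and part (1) is exactly the paper's computation: cross-product invariance plus $kE_j=E_j$. For part (2) the paper follows the same skeleton---reduce to the two commutation relations $k\cdot L^{\times}(2E_j)=L^{\times}(2E_j)\cdot k$ and $k\cdot\sigma_j=\sigma_j\cdot k$---but it obtains the second one by citing \cite[Proposition~4.14]{Na2012}, which identifies $(\mathrm{F}_{4(-20)})_{E_j}$ with the fixed-point group of $\tilde{\sigma}_j$. You instead derive it internally from the Peirce decomposition of the idempotent $E_j$: the identity $\sigma_j=\mathrm{id}-8L(E_j)+8L(E_j)^2$ is checked on the eigenspaces of $L(E_j)$ (eigenvalue $1$ on $\mathbb{R}E_j$, eigenvalue $0$ on $\mathbb{R}E_{j+1}\oplus\mathbb{R}E_{j+2}\oplus F_j^1({\bf O})$, eigenvalue $\tfrac12$ on $F_{j+1}^1({\bf O})\oplus F_{j+2}^1({\bf O})$, where your quadratic takes the values $1$, $1$, $-1$ respectively, matching the explicit action of $\sigma_j$), and $k$ commutes with $L(E_j)$ because it is a Jordan automorphism fixing $E_j$, hence with every polynomial in $L(E_j)$. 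This buys self-containedness---no appeal to the companion paper---and, as you note, it treats $j=3$ on the same footing as $j=1,2$, whereas the identities displayed in this paper, (\ref{itd008}) and (\ref{itd009}), only cover $j=1,2$. The only thing your sketch leaves to the reader is the eigenspace description itself, but that is a short direct computation with the matrix form $h^1$, so the argument is complete in substance.
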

\begin{proof}
(1) It follows from
$L^{\times}(2 E_j) ( k X)=2E_j \times (k X)
=k(2 E_j \times X)=k ( L^{\times}(2 E_j) X)$
for all $X \in \mathcal{J}^1$.

(2) From \cite[Proposition~4.14]{Na2012} and (1),
we see that
$k \cdot \sigma_j=\sigma_j \cdot k$
and $L^{\times}(2E_j) \cdot k=k \cdot L^{\times}(2E_j)$
for all $k \in (\mathrm{F}_{4(-20)})_{E_j}$.
Hence (2) follows.
\end{proof}
\medskip

By direct calculations, we have the following lemma.
\begin{lemma}\label{md-02}
Let $j \in \{1,2,3\}$.
\[
\mathcal{J}^1
=(\mathcal{J}^1)^j_{-1,0}\oplus
(\mathcal{J}^1)^j_{1,0}
\oplus
(\mathcal{J}^1)^j_{1,1}
\oplus
(\mathcal{J}^1)^j_{1,-1}
\]
where
\begin{align*}
(\mathcal{J}^1)^j_{-1,0}&=
\{F_{j+1}^1(x_{j+1})+F_{j+2}^1(x_{j+2})|~
x_{j+1},x_{j+2} \in {\bf O}\},\\
(\mathcal{J}^1)^j_{1,0}&=\{p E_j|~
p \in \mathbb{R}\},\quad
(\mathcal{J}^1)^j_{1,1}=\{q (E-E_j)|~
q \in \mathbb{R}\},\\
(\mathcal{J}^1)^j_{1,-1}&=\{\xi (E_{j+1}-E_{j+2}) + F_j^1(x_j)|~
\xi \in \mathbb{R},~x_j \in {\bf O}\}
\end{align*}
and indexes $j,~j+1,~j+2$ are counted modulo $3$.
\end{lemma}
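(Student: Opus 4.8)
The plan is to simultaneously diagonalize the two endomorphisms $\sigma_j$ and $L^{\times}(2E_j)$ on $\mathcal{J}^1$. Since $\mathcal{J}^1$ is spanned by the idempotents $E_1,E_2,E_3$ together with the octonionic slots $F_i^1(x)$ $(i=1,2,3,~x\in{\bf O})$, it suffices to evaluate both operators on each of these generators, read off the pair of eigenvalues $(p,q)$, and check that the resulting simultaneous eigenspaces coincide with the four subspaces listed. The direct sum will then follow because these eigenspaces exhaust a spanning set of $\mathcal{J}^1$ and belong to distinct eigenvalue pairs.

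First I would record the action of $\sigma_j$. Directly from the definition $\sigma_j(\sum_i(\xi_iE_i+F_i^1(x_i)))=\sum_i(\xi_iE_i+F_i^1((-1)^{1-\delta_{i,j}}x_i))$, the operator $\sigma_j$ fixes every $E_i$ and fixes $F_j^1(x)$, while it sends $F_{j+1}^1(x)$ and $F_{j+2}^1(x)$ to their negatives (indices modulo $3$). Hence the $(+1)$-eigenspace of $\sigma_j$ is $\mathbb{R}E_1\oplus\mathbb{R}E_2\oplus\mathbb{R}E_3\oplus F_j^1({\bf O})$ and the $(-1)$-eigenspace is $F_{j+1}^1({\bf O})\oplus F_{j+2}^1({\bf O})$.

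The main computation is the action of $L^{\times}(2E_j)$. Writing $X=\sum_i(\xi_iE_i+F_i^1(x_i))$ and using the cross-product formula together with the Jordan products $E_j\circ E_i=\delta_{i,j}E_i$, $E_j\circ F_j^1(x)=0$, and $E_j\circ F_i^1(x)=2^{-1}F_i^1(x)$ for $i\ne j$, as well as $(E_j|X)=\xi_j$, I expect to obtain after collecting terms
\[
L^{\times}(2E_j)X=2E_j\times X=\xi_{j+2}E_{j+1}+\xi_{j+1}E_{j+2}-F_j^1(x_j).
\]
From this formula $E_j$ lies in the kernel ($q=0$), the pair $E_{j+1},E_{j+2}$ is interchanged so that $E-E_j=E_{j+1}+E_{j+2}$ has $q=1$ and $E_{j+1}-E_{j+2}$ has $q=-1$, each $F_j^1(x)$ has $q=-1$, and each $F_{j+1}^1(x),F_{j+2}^1(x)$ has $q=0$. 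Notice that this explicit formula already shows $L^{\times}(2E_j)$ preserves the $\sigma_j$-eigenspaces, so no separate commutativity argument is needed.

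Combining the two computations, I would match each generator with its pair $(p,q)$: $E_j\mapsto(1,0)$, $E-E_j\mapsto(1,1)$, both $E_{j+1}-E_{j+2}$ and $F_j^1({\bf O})\mapsto(1,-1)$, and $F_{j+1}^1({\bf O})\oplus F_{j+2}^1({\bf O})\mapsto(-1,0)$. This identifies the four subspaces exactly as stated and, since they together span $\mathcal{J}^1$ and lie in distinct simultaneous eigenspaces, yields the asserted direct sum decomposition. The only delicate point is the bookkeeping in the cross-product computation—in particular keeping the trace term $(\mathrm{tr}(X)-\xi_j)E$ and the diagonal contributions straight so that the coefficient of $E_j$ cancels—but this is a routine, if slightly lengthy, calculation.
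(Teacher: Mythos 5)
Your proposal is correct and follows the same route the paper takes: the paper simply states the lemma follows ``by direct calculations,'' and your simultaneous diagonalization of $\sigma_j$ and $L^{\times}(2E_j)$ on the generators $E_i$, $F_i^1(x)$ is exactly that calculation, with the key formula $L^{\times}(2E_j)X=\xi_{j+2}E_{j+1}+\xi_{j+1}E_{j+2}-F_j^1(x_j)$ checking out against the definition of the cross product. The concluding observation that the four subspaces span $\mathcal{J}^1$ and carry distinct eigenvalue pairs correctly yields both the direct sum and the exact identification of each $(\mathcal{J}^1)^j_{p,q}$.
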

\medskip

Let $j \in \{2,3\}$.
For $X \in \mathcal{J}^1$,
we denote the quadratic form $Q$ by
$Q(Y):=-\mathrm{tr}(Y^{\times 2})$
for $Y \in \mathcal{J}^1$,
and
$\mathcal{S}^{8,1}_j:=\{X \in (\mathcal{J}^1)^j_{1,-1}|
~Q(X)=1\}
=\{\xi (E_{j+1}-E_{j+2})+F_j^1(x)|~\xi\in\mathbb{R}, x \in 
{\bf O},~
\xi^2-(x|x)=1\}$.

\begin{lemma}\label{md-03}
Let $j \in \{2,3\}$
and indexes $j,~j+1,~j+2$ be counted modulo $3$.
$\mathcal{S}^{8,1}_j$ decomposes
into the following two $(\mathrm{F}_{4(-20)})_{E_j}$-orbits:
\[
\mathcal{S}^{8,1}_j=(\mathrm{F}_{4(-20)})_{E_j} \cdot (E_{j+1}-E_{j+2})
\coprod (\mathrm{F}_{4(-20)})_{E_j} \cdot (-E_{j+1}+E_{j+2}).\]
\end{lemma}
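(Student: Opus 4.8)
The plan is to realize $\mathcal{S}^{8,1}_j$ as a two-sheeted hyperboloid for the signature-$(1,8)$ form $Q(X)=\xi^2-(x|x)$ on the $9$-dimensional space $(\mathcal{J}^1)^j_{1,-1}$, and to show that the connected group $(\mathrm{F}_{4(-20)})_{E_j}$ acts transitively on each sheet. First I would record that $(\mathrm{F}_{4(-20)})_{E_j}$ genuinely acts on $\mathcal{S}^{8,1}_j$: it stabilizes $(\mathcal{J}^1)^j_{1,-1}$ by Lemma~\ref{md-01}(2), and it preserves $Q$ because every $g\in\mathrm{F}_{4(-20)}$ satisfies $g(X^{\times 2})=(gX)^{\times 2}$ and $\mathrm{tr}(gX)=\mathrm{tr}(X)$, whence $Q(gX)=-\mathrm{tr}((gX)^{\times2})=Q(X)$.

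For transitivity I would carry out an explicit two-step reduction of an arbitrary $X=\xi(E_{j+1}-E_{j+2})+F_j^1(x)\in\mathcal{S}^{8,1}_j$. The first step uses $\mathrm{D}_4\subset(\mathrm{F}_{4(-20)})_{E_j}$: by the normalization (\ref{id001}) there is $g_0\in\mathrm{D}_4$ with $g_0X=\xi(E_{j+1}-E_{j+2})+F_j^1(r_0)$, where $r_0=\sqrt{(x|x)}\ge 0$ and the $E$-coefficients are unchanged because $\mathrm{D}_4$ fixes every $E_i$. The second step applies the boost $\exp(t\tilde{A}_j^1(1))\in(\mathrm{F}_{4(-20)})_{E_j}$; feeding $g_0X$ into (\ref{prl2005}) with $i=j$ and $a=1$, the $F_j^1$-coefficient becomes $r_0\cosh 2t-\xi\sinh 2t$ (after using $2\sinh^2 t=\cosh 2t-1$), which I annihilate by choosing $\tanh 2t=r_0/\xi$. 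This is solvable because $\xi^2-r_0^2=1$ forces $|\xi|\ge 1>r_0\ge 0$, and substituting $\cosh 2t=|\xi|$, $\sinh 2t=\mathrm{sgn}(\xi)\,r_0$ back into the $E_{j+1}$- and $E_{j+2}$-coefficients collapses $g_0X$ onto $\mathrm{sgn}(\xi)(E_{j+1}-E_{j+2})$. Thus every $X$ with $\xi>0$ lies in $(\mathrm{F}_{4(-20)})_{E_j}\cdot(E_{j+1}-E_{j+2})$ and every $X$ with $\xi<0$ in $(\mathrm{F}_{4(-20)})_{E_j}\cdot(-E_{j+1}+E_{j+2})$; since $\xi^2=1+(x|x)\ge 1$ never vanishes, these two orbits already exhaust $\mathcal{S}^{8,1}_j$.

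It remains to see that the union is disjoint, and this is the step I expect to be the crux. The separating invariant is the linear coordinate $\xi=(E_{j+1}|X)$, which is continuous on $\mathcal{S}^{8,1}_j$ and never zero there. Because $(\mathrm{F}_{4(-20)})_{E_j}$ is connected (it is $\mathrm{Spin}^0(8,1)$ by (\ref{itd009}), and $(\mathrm{F}_{4(-20)})_{E_3}\cong(\mathrm{F}_{4(-20)})_{E_2}$), the function $g\mapsto(E_{j+1}|g(E_{j+1}-E_{j+2}))$ is continuous and nowhere vanishing on a connected domain, taking the value $+1$ at $g=1$; hence it stays positive, so $-E_{j+1}+E_{j+2}$ (where $\xi=-1$) can never be reached from $E_{j+1}-E_{j+2}$. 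This yields the disjointness and completes the decomposition. The only genuine computation is the hyperbolic-function bookkeeping in the boost step, which is routine once (\ref{prl2005}) is in hand; the conceptual care is concentrated in the connectedness argument that makes $\mathrm{sgn}(\xi)$ a legitimate orbit invariant.
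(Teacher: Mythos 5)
Your proof is correct, but it takes a genuinely different route from the paper. The paper disposes of the case $j=3$ by citing Lemmas~4.2 and 4.6 of \cite{Na2012}, and then obtains the case $j=2$ by conjugating with $g_0=\exp(2^{-1}\pi\tilde{A}_1^1(1))$, using $g_0\sigma_3g_0^{-1}=\sigma_2$ and $g_0\mathcal{S}^{8,1}_3=\mathcal{S}^{8,1}_2$; in other words, its proof is essentially a citation plus a transfer argument. You instead give a self-contained proof inside this paper's toolkit: the $\mathrm{D}_4$-normalization (\ref{id001}) to reduce the octonion coordinate to $r_0=\sqrt{(x|x)}$, the boost formula (\ref{prl2005}) with $i=j$, $a=1$ to solve $\tanh 2t=r_0/\xi$ (solvable since $\xi^2-r_0^2=1$ forces $|\xi|>r_0$), and the connectedness of $(\mathrm{F}_{4(-20)})_{E_j}\cong\mathrm{Spin}^0(8,1)$ together with the nowhere-vanishing continuous invariant $(E_{j+1}|\cdot)$ to separate the two sheets. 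I checked the hyperbolic bookkeeping ($y_j=r_0\cosh 2t-\xi\sinh 2t$, $\eta_{j+1}=\xi\cosh 2t-r_0\sinh 2t\mapsto\mathrm{sgn}(\xi)$) and the claim that $\exp(t\tilde{A}_j^1(1))$ fixes $E_j$; all are right. What your approach buys is independence from the external reference — you are in effect re-proving the content of \cite[Lemmas~4.2 and 4.6]{Na2012} uniformly in $j$, which also parallels the structure of the proof of Lemma~\ref{md-04}(1) later in this section; what the paper's approach buys is brevity. Both are valid.
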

\begin{proof}
From \cite[Lemmas~4.2 and 4.6]{Na2012},
\[\mathcal{S}^{8,1}_3
=(\mathrm{F}_{4(-20)})_{E_3}  \cdot (E_1-E_2)
\coprod (\mathrm{F}_{4(-20)})_{E_3}  \cdot(-E_1+E_2).\]
Put $g_0=\exp(
2^{-1} \pi \tilde{A}_1^1(1))$.
Multiplying $g_0$ from the left, 
we have
\[\mathcal{S}^{8,1}_2=
(\mathrm{F}_{4(-20)})_{E_2}  \cdot (E_1-E_3)
\coprod \mathrm{F}_{4(-20)})_{E_2}  \cdot (-E_1+E_3).\]
Here, using $g_0 \sigma_3 g_0^{-1}=\sigma_2$,
we can prove $g_0 \mathcal{S}^{8,1}_3=\mathcal{S}^{8,1}_2$.
\end{proof}

\begin{lemma}\label{md-04}
Let $X\in\mathcal{N}_1^-$.

{\rm (1)} If $(X|E_2)\ne 0$,
then
there exists $k_{\epsilon}\in K_{\epsilon}$ such that
$k_{\epsilon}X=rP_{12}^-$ for some $r>0$.
\smallskip

{\rm (2)} If $(X|E_2)= 0$,
then
there exists $k_{\epsilon}\in K_{\epsilon}$ such that
$k_{\epsilon}X=rP_{13}^-$ for some $r>0$.
\end{lemma}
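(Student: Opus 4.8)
The plan is to split along the $K_{\epsilon}$-invariant quantity $(X|E_2)$ and to realise $X$ as a translate of $P^-$. Since every $k_{\epsilon}\in K_{\epsilon}=(\mathrm{F}_{4(-20)})_{E_2}$ fixes $E_2$, one has $(k_{\epsilon}X|E_2)=(X|k_{\epsilon}^{-1}E_2)=(X|E_2)$, so the conditions $(X|E_2)\ne 0$ and $(X|E_2)=0$ are each preserved under $K_{\epsilon}$; this is exactly what forces the two orbit types. Throughout I would use that $\mathcal{N}_1^-=\mathrm{F}_{4(-20)}\cdot P^-$ by (\ref{itd007}), so I may write $X=gP^-$ for some $g\in\mathrm{F}_{4(-20)}$, and that $\mathrm{D}_4=(\mathrm{F}_{4(-20)})_{E_1,E_2,E_3}\subset K_{\epsilon}$.

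For part (1), the hypothesis $(X|E_2)=(gP^-|E_2)\ne 0$ places $g$ in the open dense set $K_{\epsilon}AN^+$ by the already-established $K_{\epsilon}$-Iwasawa decomposition (Main Theorem~\ref{itd-06}). Writing $g=k_0a_tn$ with $k_0\in K_{\epsilon}$, $a_t\in A$, $n\in N^+$, and invoking Lemma~\ref{prl2-07} (which gives $a_tnP^-=e^{2t}P^-$), I obtain $X=gP^-=e^{2t}k_0P^-$, hence $k_0^{-1}X=e^{2t}P^-=e^{2t}P_{12}^-$. Taking $k_{\epsilon}=k_0^{-1}\in K_{\epsilon}$ and $r=e^{2t}>0$ finishes this case. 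The advantage of this route is that the $K_{\epsilon}$-Iwasawa decomposition already absorbs the work of disentangling the general (vector, spinor, and scalar) components of $X$, so no direct reduction of $X$ is needed.

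For part (2), I would instead read off the structure of $X$ from the rank-one equations. Writing $X=\sum_{i=1}^3(\xi_iE_i+F_i^1(x_i))$, the hypothesis is $\xi_2=(X|E_2)=0$. Feeding $\xi_2=0$ into $X^{\times 2}=0$ via (\ref{itd002}) gives $0=(X^{\times 2})_{E_1}=\xi_2\xi_3-(x_1|x_1)=-(x_1|x_1)$ and $0=(X^{\times 2})_{E_3}=\xi_1\xi_2+(x_3|x_3)=(x_3|x_3)$, so $x_1=x_3=0$; the relation $0=(X^{\times 2})_{E_2}=\xi_3\xi_1+(x_2|x_2)$ yields $(x_2|x_2)=-\xi_1\xi_3$, while $\mathrm{tr}(X)=0$ forces $\xi_3=-\xi_1$, and $\xi_1=(E_1|X)<0$ gives $\lambda:=-\xi_1>0$ with $(x_2|x_2)=\lambda^2$. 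Thus $X=\lambda(-E_1+E_3)+F_2^1(x_2)$. Applying the $\mathrm{D}_4$-normalisation (\ref{id001}) with $j=2$ (here $x_1=x_3=0$) produces $g_0\in\mathrm{D}_4\subset K_{\epsilon}$ with $g_0X=\lambda(-E_1+E_3)+F_2^1(\sqrt{(x_2|x_2)})=\lambda\bigl(-E_1+E_3+F_2^1(1)\bigr)=\lambda P_{13}^-$, so $k_{\epsilon}=g_0$ and $r=\lambda>0$ work.

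The only genuinely delicate points are the sign bookkeeping that makes $r>0$ (it rests on $(E_1|X)<0$, i.e. membership in $\mathcal{N}_1^-$ rather than $\mathcal{N}_1^+$), and, for part (1), the decision to lean on Main Theorem~\ref{itd-06} rather than attempt a hands-on reduction: a direct argument would have to untangle the coupling of the spinor components $F_1^1,F_3^1$ with the vector components under the boosts $\exp(t\tilde{A}_2^1(a))$, which is precisely the computation that the $K_{\epsilon}$-Iwasawa decomposition has already packaged.
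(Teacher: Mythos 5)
Your proof is correct, and the two parts sit differently relative to the paper. Part (2) is essentially the paper's own argument: write out $X$ with $\xi_2=0$, use $\mathrm{tr}(X)=0$, $(E_1|X)<0$ and the components of $X^{\times 2}=0$ to force $x_1=x_3=0$ and $(x_2|x_2)=\xi_1^2$, then normalise with (\ref{id001}) inside $\mathrm{D}_4\subset K_{\epsilon}$. Part (1), however, takes a genuinely different route. The paper works entirely inside $\mathcal{J}^1$: it decomposes $X$ by the eigenspaces of Lemma~\ref{md-02} for $(\sigma_2,L^{\times}(2E_2))$, reduces the $(\mathcal{J}^1)^2_{1,-1}$-component to $\pm(E_3-E_1)$ via the two-orbit decomposition of $\mathcal{S}^{8,1}_2$ under $(\mathrm{F}_{4(-20)})_{E_2}$ (Lemma~\ref{md-03}), kills the remaining components with the rank-one equations, and finishes with (\ref{id001}). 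You instead write $X=gP^-$, observe that $(gP^-|E_2)\ne 0$ places $g$ in $K_{\epsilon}AN^+$ by Main Theorem~\ref{itd-06}, and read off $k_0^{-1}X=e^{2t}P^-_{12}$ from Lemma~\ref{prl2-07}. This is legitimate and not circular: \S\ref{ke} precedes \S\ref{md}, and the proof of Main Theorem~\ref{itd-06} uses only Lemma~\ref{ke-01} and the preliminaries, not this lemma. What your route buys is brevity and a transparent identification of $K_{\epsilon}\cdot[P^-_{12}]$ with the image of the open cell $K_{\epsilon}AN^+$ in $\mathcal{F}$ (which is exactly the content of $K_{\epsilon}MAN^+=K_{\epsilon}AN^+$ in Main Theorem~\ref{itd-08}); what the paper's route buys is a purely algebraic argument independent of the $K_{\epsilon}$-Iwasawa theorem, together with a use for the orbit structure of $\mathcal{S}^{8,1}_2$ established in Lemma~\ref{md-03}, which your proof bypasses entirely.
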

\begin{proof} (1) 
From Lemma~\ref{md-02},
$X$ can be expressed by 
$X
=(F_3^1(x_3)+F_1^1(x_1))+p E_2+q(E-E_2)
+(\xi(E_3-E_1)+F_2^1(x_2))$
where 
$F_3^1(x_3)+F_1^1(x_1) \in (\mathcal{J}^1)^2_{-1,0}$,
$p E_2 \in (\mathcal{J}^1)^2_{1,0}$,
$q(E-E_2) \in (\mathcal{J}^1)^2_{1,1}$,
$\xi(E_3-E_1)+F_2^1(x_2) \in (\mathcal{J}^1)^2_{1,-1}$,
and $p=(X|E_2)\ne 0$
with
$p, q, \xi \in \mathbb{R}$
and $x_i \in {\bf O}$.
Because of $X\in\mathcal{N}_1^-$,
we see
$p+2q=\mathrm{tr}(X)=0$ and
$q^2-\xi^2+(x_2|x_2)=(X^{\times 2})_{E_2}=0$. 
Then $\xi^2-(x_2|x_2)=4^{-1}p^2> 0$.
Setting $r=2^{-1}|p|$, we can write
$\xi(E_3-E_1)+F_2^1(x_2)=r W$
for some 
$W\in \mathcal{S}^{8,1}_2$.
From Lemma~\ref{md-03} and (\ref{itd020}),
there exists
$k_0\in K_{\epsilon}=(\mathrm{F}_{4(-20)})_{E_2}$ such that
$k_0W=\epsilon(E_3-E_1)$ with $\epsilon=\pm 1$.
Because of $K_{\epsilon}=(\mathrm{F}_{4(-20)})_{E_2}$,
we get $k_0 (p E_2)=p E_2$
and
$k_0(q(E-E_2)=q(E-E_2)$.
And because of
 $F_3^1(x_3)+F_1^1(x_1) \in (\mathcal{J}^1)^2_{-1,0}$ 
and Lemma~\ref{md-01}(2),
we get
$k_0( F_3^1(x_3)+F_1^1(x_1))=
F_3^1(y_3)+F_1^1(y_1)$
for some $y_i \in {\bf O}$.
Therefore
$k_0X=
h^1(\eta_1,p,\eta_3;y_1,0,y_3)$
where $\eta_1=q-\epsilon r$ and $\eta_3=q+\epsilon r$.
Put $X'=k_0X$.
Because of $X' \in\mathcal{N}_1^-$
by (\ref{itd007}),
\begin{align*}
\text{(i)}&~~\eta_1=(E_1|X')<0,&\text{(ii)}&~~
\eta_1+p+\eta_3=\mathrm{tr}(X')=0,\\
\text{(iii)}&~~\eta_3\eta_1=(X'^{\times 2})_{E_2}=0,
&\text{(iv)}&~~
p\eta_3-(y_1|y_1)=(X'^{\times 2})_{E_1}=0,\\
\text{(v)}&~~\eta_1p+(y_3|y_3)=(X'^{\times 2})_{E_3}
=0.&&
\end{align*}
Form (i), (ii), and (iii), 
we get
$\eta_3=0$,
$\eta_1=-p$, and $p>0$.
Next by (iv) and (v), we get $y_1=0$ and $p=\sqrt{(y_3|y_3)}$.
Consequently $X'=h^1(-p,p,0;0,0,y_3)$ with 
$p=\sqrt{(y_3|y_3)}$.
Using (\ref{id001}),
there exists $k_1\in D_4\subset K_{\epsilon}$ such that
$k_1k_0 X=k_1X'=h^1(-p,p,0;0,0,p)=p P_{12}^-$.

(2) Because of $\mathrm{tr}(X)=0$ and $(X|E_2)=0$,
$X=h^1(-r,0,r;x_1,x_2,x_3)$
for some $r \in \mathbb{R}$ and $x_i\in{\bf O}$. 
Because of $X\in\mathcal{N}_1^-$,
we get
$-r=(E_1|X)<0$,
$-(x_1|x_1)=(X^{\times 2})_{E_1}
=0$,
$-r^2+(x_2|x_2)=(X^{\times 2})_{E_2}
=0$, and
$(x_3|x_3)=(X^{\times 2})_{E_3}
=0$.
Then 
$X=h^1(-r,0,r;0,x_2,0)$
with $r=\sqrt{(x_2|x_2)}$.
Using (\ref{id001}),
there exists 
$k'\in D_4\subset K_{\epsilon}$ such that
$k'X=h^1(-r,0,r;0,r,0)=rP_{13}^-$.
\end{proof}
\medskip

\begin{proof}[{\bf Proof of 
Theorem~\ref{itd-07}}]
Set $\mathcal{O}=\{[X]\in\mathcal{F}|~(X|E_2)\ne 0\}$,
and $\mathcal{O}'=\{[X]\in\mathcal{F}|~(X|E_2)= 0\}$.
Then $\mathcal{F}=\mathcal{O} \coprod \mathcal{O}'$. 
Because of $\mathcal{F}=\mathcal{N}_1^-/\sim$
and Lemma~\ref{prl2-15}(4),
$\mathcal{O}=\{[X]\in\mathcal{F}|~(X|E_2)> 0\}$.
For any $k \in K_{\epsilon}$,
$(k X |E_2)
=(X| k^{-1}E_2)=(X|E_2)$
 by  (\ref{itd020}),
so that
$K_{\epsilon}$ acts on $\mathcal{O}$
and $\mathcal{O}'$, respectively.
When $[X] \in \mathcal{O}$,
by Lemma~\ref{md-04}(1), 
there exists $k \in K_{\epsilon}$
such that
$k [X]=[k X]=[P_{12}^-]$.
And when $[X] \in \mathcal{O}'$,
by Lemma~\ref{md-04}(2), 
there exists $k'\in K_{\epsilon}$
such that
$k'[X]=[k'X]=[P_{13}^-]$.
Hence the result follows.
\end{proof}
\medskip

\begin{proof}[{\bf Proof of 
Theorem~\ref{itd-08}}]
Put $g_0=\exp(-2^{-1}\pi\tilde{A}_1^1(1))$, and
$\mathcal{D}=\{g\in \mathrm{F}_{4(-20)}|~(gP_{12}^-|E_2)= 0\}$.
Then $g_0^{-1}=\exp(2^{-1}\pi\tilde{A}_1^1(1))$,
and from (\ref{prl2006}),
$g_0^{-1}P_{13}^-=P_{12}^-$ and
$g_0^{-1}E_2=E_3$.
Fix $g\in \mathcal{D}$.
By (\ref{itd007}), $g P_{12}^- \in \mathcal{N}_1^-$,
and applying Theorem~\ref{itd-07},
$[g P_{12}^-]\in K_{\epsilon}\cdot [P_{13}^-]$.
Therefore
$k [g P_{12}^-]=[P_{13}^-]$
for some $k \in K_{\epsilon}$.
Then 
$g_0^{-1}k g[P_{12}^-]
=[g_0^{-1}P_{13}^-
]=[P_{12}^-]$, so that
$g_0^{-1} k g\in (\mathrm{F}_{4(-20)})_{[P_{12}^-]}$.
Using Theorem~\ref{itd-05}(1),
$g_0^{-1}k g
=m a_ tn$
for some $m\in M,~t \in \mathbb{R}$, and
$n\in N^+$.
Thus
$g=k^{-1}g_0
man
\in K_{\epsilon}g_0 M A N^+$,
and so
$\mathcal{D}
\subset  K_{\epsilon}g_0 M A N^+$.
Conversely, take 
$g=k g_0a_tmn\in K_{\epsilon}g_0 M A N^+$
with $k \in K_{\epsilon}$, 
$t\in\mathbb{R}$, and $n\in N^+$.
Because of Lemma~\ref{prl2-07}, (\ref{itd020}),
and $g_0^{-1}E_2=E_3$, we see
$
(g P_{12}^-|E_2)
=(m a_t n P_{12}^-|g_0^{-1} k^{-1}E_2)
=e^{2 t}(P_{12}^-|E_3)
=0
$.
Thus $g \in\mathcal{D}$,
and so $K_{\epsilon}g_0 MAN^+\subset
\mathcal{D}$.
Hence
$K_{\epsilon}g_0 MAN^+=
\mathcal{D}$.
Last, from $M \subset K_{\epsilon}$ and
Main Theorem~\ref{itd-06},
$K_{\epsilon}MAN^+=K_{\epsilon}AN^+
=\{g \in \mathrm{F}_{4(-20)}|~(g P_{12}^-|E_2)\ne 0\}$.
Thus
$\mathrm{F}_{4(-20)}
=\{g \in \mathrm{F}_{4(-20)}|~(g P_{12}^-|E_2)\ne 0\}
\coprod \{g\in \mathrm{F}_{4(-20)}|~(g P_{12}^-|E_2)= 0\}
=K_{\epsilon}MAN^+\coprod 
K_{\epsilon}g_0 MAN^+$.
\end{proof}

\section{The Bruhat and Gauss 
decompositions of $\mathrm{F}_{4(-20)}$.}
\label{gd}
Because of
$\mathcal{F}
\simeq \mathrm{F}_{4(-20)}/MAN^+$,
we consider $N^-$-orbits on $\mathcal{F}$
to give the Bruhat and Gauss decompositions
of $\mathrm{F}_{4(-20)}$.
For any 
$X \in (\mathcal{N}_1^-)^{\sigma P^-}_{\ne 0}$, 
denote
$z_X:=\tilde{\sigma}(n_{\sigma X}) \in N^-$.

\begin{lemma}\label{gd-01}
Assume that
$X\in (\mathcal{N}_1^-)^{\sigma P^-}_{\ne 0}$.
Then 
\[
z_X X=4^{-1}
(X|\sigma P^-) P^-.
\]
\end{lemma}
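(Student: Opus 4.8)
The plan is to reduce the claim to Lemma~\ref{prl2-05} by conjugating everything through $\sigma$. First I would record the elementary properties of $\sigma=\sigma_1$: it is an involution lying in $\mathrm{F}_{4(-20)}$ (so $\sigma^{-1}=\sigma$ and it preserves $\mathrm{tr}$, the inner product, and the cross product), it fixes $E_1$ and hence $-E_1+E_2$, and by definition $\sigma P^-=-E_1+E_2+F_3^1(-1)$. From $\sigma E_1=E_1$, $\mathrm{tr}(\sigma X)=\mathrm{tr}(X)=0$, and $(\sigma X)^{\times 2}=\sigma(X^{\times 2})=0$, it follows that $\sigma$ maps $\mathcal{N}_1^-$ into itself. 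Moreover, since $\sigma$ preserves the inner product and $\sigma^2=1$, we have $(P^-|\sigma X)=(\sigma P^-|X)=(X|\sigma P^-)\ne 0$. Hence $\sigma X\in(\mathcal{N}_1^-)^{P^-}_{\ne 0}\subset(\mathcal{R}_1)^{P^-}_{\ne 0}$, so $n_{\sigma X}$ is well-defined and $z_X=\tilde{\sigma}(n_{\sigma X})=\sigma n_{\sigma X}\sigma^{-1}\in N^-$, as required by the definition of $z_X$.

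Next I would apply Lemma~\ref{prl2-05} to $Y=\sigma X$. Because $\mathrm{tr}(\sigma X)=0$, the terms of that formula carrying the factors $\mathrm{tr}(Y)^2$ and $\mathrm{tr}(Y)(E-E_3)$ vanish, leaving
\[
n_{\sigma X}(\sigma X)=2^{-1}(P^-|\sigma X)(-E_1+E_2)-4^{-1}(P^-|\sigma X)P^-.
\]
Writing $c:=(P^-|\sigma X)=(X|\sigma P^-)$, this reads $n_{\sigma X}(\sigma X)=2^{-1}c(-E_1+E_2)-4^{-1}c\,P^-$.

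Finally I would translate this back by $\sigma$. Since $z_X=\sigma n_{\sigma X}\sigma^{-1}$ and $\sigma^{-1}=\sigma$, one has $z_X X=\sigma\bigl(n_{\sigma X}(\sigma X)\bigr)$. Applying $\sigma$ to the displayed expression and using $\sigma(-E_1+E_2)=-E_1+E_2$ together with $\sigma P^-=-E_1+E_2+F_3^1(-1)$ yields
\[
z_X X=2^{-1}c(-E_1+E_2)-4^{-1}c\,\sigma P^-.
\]
It then remains only to collect components: substituting $\sigma P^-=(-E_1+E_2)-F_3^1(1)$ shows the right-hand side equals $4^{-1}c(-E_1+E_2)+4^{-1}c\,F_3^1(1)=4^{-1}c\,P^-=4^{-1}(X|\sigma P^-)P^-$, which is the assertion. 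The argument is essentially a bookkeeping computation once the conjugation identity $z_X X=\sigma(n_{\sigma X}(\sigma X))$ is in place, so I do not expect a genuine obstacle; the one point demanding care is this final reshuffling of the $F_3^1(\pm1)$ components, since $\sigma$ acts trivially on $-E_1+E_2$ but nontrivially on $F_3^1$, and one must verify that the $F_3^1$ parts reassemble into $P^-$ rather than $\sigma P^-$.
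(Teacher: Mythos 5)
Your proposal is correct and follows essentially the same route as the paper: both apply Lemma~\ref{prl2-05} to $\sigma X$ (using $\mathrm{tr}(\sigma X)=0$ and $(P^-|\sigma X)=(X|\sigma P^-)\ne 0$) and then conjugate back through $\sigma$ via $z_XX=\sigma(n_{\sigma X}(\sigma X))$. The only cosmetic difference is that the paper collects the terms into $4^{-1}(X|\sigma P^-)\sigma P^-$ before applying $\sigma$, whereas you apply $\sigma$ first and then reassemble the $F_3^1(\pm 1)$ components; the outcome is identical.
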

\begin{proof}
Since $(\sigma X|P^-)=(X|\sigma P^-) \ne 0$
and $\mathrm{tr}(\sigma X)=\mathrm{tr}(X)=0$,
applying Lemma~\ref{prl2-05},
\[n_{\sigma X}(\sigma X) =
4^{-1}(\sigma X|P^-)(-E_1+E_2+F_3^1(-1))
=4^{-1}(X|\sigma P^-)\sigma P^-.
\]
Thus
$z_X X=(\sigma n_{\sigma X}\sigma) X
=4^{-1}(X|\sigma P^-) P^-$.
\end{proof}
\medskip

\begin{proof}[{\bf Proof of 
Theorem~\ref{itd-09}}]
Set $\mathcal{O}=\{[X]\in\mathcal{F}|(X|\sigma P^-) 
> 0\}$, 
and $\mathcal{O}'=\{[X]\in\mathcal{F}|(X|\sigma P^-) 
= 0\}$.
Using Lemma~\ref{prl2-15}(5),
$\mathcal{O}=
\{[X]\in\mathcal{F}|X \in 
(\mathcal{N}_1^-)^{\sigma P^-}_{>0}\}
=\{[X]\in\mathcal{F}|X \in 
(\mathcal{N}_1^-)^{\sigma P^-}_{\ne 0}\}
=\{[X]\in\mathcal{F}|(X|\sigma P^-)\ne 0\}$
and
$\mathcal{O}'=
\{[X]\in\mathcal{F}|X \in 
(\mathcal{N}_1^-)^{\sigma P^-}_{=0}\}
=\{[\sigma P^-]\}$.
Therefore $\mathcal{F}=\mathcal{O}\coprod \mathcal{O}'$.
For any $z \in N^-$ and $[X] \in \mathcal{O}$,
using Lemma~\ref{prl2-08},
$(zX|\sigma P^-)=(X|z^{-1}(\sigma P^-))
=(X|\sigma P^-)>0$.
Therefore $N^-$ acts on $\mathcal{O}$.
Fix $[X] \in \mathcal{O}$.
Taking $z_X\in N^-$,
from Lemma~\ref{gd-01},
$z_X[X]=[4^{-1}(X|\sigma P^-)P^-]=[P^-]$.
Therefore $\mathcal{O}=N^- \cdot [P^-]$.
Next,
using Lemma~\ref{prl2-08},
$N^- \cdot [\sigma P^-]=\{[\sigma P^-]\}
=\mathcal{O}'$.
Therefore 
$\mathcal{F}
=\mathcal{O}\coprod \mathcal{O}'
=N^- \cdot [P^-] \coprod \{[\sigma P^-]\}
=N^- \cdot [P^-] \coprod N^- \cdot [\sigma P^-]$.
\end{proof}

\begin{proof}[{\bf Proof of 
Main Theorem~\ref{itd-10}}]
Put $\mathcal{D}
=\{g \in \mathrm{F}_{4(-20)}|(g P^-|\sigma P^-)>0\}$.
From (\ref{itd007}), $g P^-\in \mathcal{N}_1^-$,
and
using Lemma~\ref{prl2-15}(5),
$\mathcal{D}=\{g \in \mathrm{F}_{4(-20)}|
g P^-\in (\mathcal{N}_1^-)^{\sigma P^-}_{>0}\}
=\{g \in \mathrm{F}_{4(-20)}|
g P^-\in (\mathcal{N}_1^-)^{\sigma P^-}_{ \ne 0}\}
=\{g \in \mathrm{F}_{4(-20)}|
(g P^-|\sigma P^-)\ne 0\}$
and the complement set $\mathcal{D}^c$
of $\mathcal{D}$ is given by
$\mathcal{D}^c=\{g \in \mathrm{F}_{4(-20)}|
(g P^-|\sigma P^-)=0\}
=\{g \in \mathrm{F}_{4(-20)}|~gP^-
\in (\mathcal{N}_1^-)^{\sigma P^-}_{=0}\}
=\{g \in \mathrm{F}_{4(-20)}|
g[P^-]=[\sigma P^-]\}$.
First, let us show $\mathcal{D}=N^-MAN^+$.
From Lemma~\ref{prl2-10}(3),
$N^-MAN^+ \subset \mathcal{D}$.
Conversely,
fix $g \in \mathcal{D}$.
Then
$g P^- \in (\mathcal{N}_1^-)^{\sigma P^-}_{> 0}$,
and from Lemma~\ref{gd-01},
$(z_{g P^-}) g P^-=4^{-1}
(g P^-|\sigma P^-)P^-$ and $(g P^-|\sigma P^-)>0$.
Therefore $(z_{g P^-})g [ P^-]=[4^{-1}
(g P^-|\sigma P^-)P^-]=[P^-]$.
Using Theorem~\ref{itd-05}(1),
$(z_{g P^-})g=ma_tn$
for some $t \in\mathbb{R}$, $m \in M$, and $n \in N^+$.
Thus 
\[
\tag{*}
g=(z_{g P^-})^{-1}ma_tn
\in
N^-MAN^+,\]
and so $\mathcal{D} \subset N^-MAN^+$.
Hence $\mathcal{D}=N^-MAN^+$.
Since the identity element $1 \in 
\mathrm{F}_{4(-20)}$ is in $\mathcal{D}$, 
applying Lemma~\ref{prl2-16},
$\mathcal{D} = N^-MAN^+$ is an open dense
submanifold of $\mathrm{F}_{4(-20)}$.

Second, let us show $\mathcal{D}^c=\sigma MAN$.
Fix $\sigma ma_tn \in \sigma MAN$
with $m \in M$, $t\in \mathbb{R}$, and $n \in N^+$.
By Lemma~\ref{prl2-07},
$\sigma ma_tn P^-=e^{2t}(\sigma P^-)$,
so that $\sigma ma_tn [P^-]=[\sigma P^-]$.
Thus $\sigma MAN \subset \mathcal{D}^c$.
Conversely, fix $g \in \mathcal{D}^c$.
Because of $g [P^-]=[\sigma P^-]$,
$\sigma g [P^-]=[P^-]$.
Using Theorem~\ref{itd-05}(1),
$\sigma g \in MAN^+$.
Thus $g \in \sigma MAN^+$,
and so $\mathcal{D}^c\subset \sigma MAN^+$.
Hence $\mathcal{D}^c= \sigma MAN^+$,
and 
$\mathrm{F}_{4(-20)}
=\mathcal{D}
\coprod
\mathcal{D}^c
=N^-MAN^+\coprod \sigma MAN^+$.
Now, from $N^-=\tilde{\sigma}(N^+)=\sigma N^+ \sigma$
 and Lemma~\ref{prl2-09}(3),
it follows that
$N^-\sigma MAN^+
=\sigma N^+ MAN^+
=\sigma MAN^+$.

Next, from (*), 
set
$n^-_G(g)=(z_{g P^-})^{-1}$,
$a_G(g)=a_t$,
$n^+_G(g)=n$, and
$m_G(g)=m$,
respectively.
Then $g=n^-_G(g)m_G(g)a_G(g)n^+_G(g)$,
and 
it follows from
Lemma~\ref{prl2-13}(3)
that $a_G(g)$, $n^-_G(g)$, $n^+_G(g)$,
and  $m_G(g)$
are
uniquely determined.
Now, since $(z_{g P^-})g=ma_tn$
and the uniqueness of factors
of the Iwasawa decomposition of $\mathrm{F}_{4(20)}$,
$a_G(g)$,
$n^+_G(g)$, and
$m_G(g)$
are
given by
$a_G(g)=\exp \left(H((z_{g P^-})g)\tilde{A}_3^1(1)
\right)$,
$n^+_G(g)=n_I((z_{g P^-})g)$,
and $m_G(g)=k((z_{g P^-})g)$,
respectively.
Then these equations imply that (i),
(ii), (iii), and (iv).
Indeed,
using Lemma~\ref{gd-01},
\[-((z_{g P^-})gP^-|E_1)
=-4^{-1}(g P^-|\sigma P^-)(P^-|E_1)
=4^{-1}(g P^-|\sigma P^-),\]
so that $t=2^{-1}\log(4^{-1}(g P^-|\sigma P^-))$.
Because of
$\sigma Q^+(e_i)=Q^-(e_i)$, $\sigma F_3^1(e_i)=-F_3^1(e_i)$,
and
(\ref{itd018}), we see
\begin{align*}
(z_{g P^-})^{-1}
=&\tilde{\sigma}
\biggl(
\exp\left(\mathcal{G}_1
\bigl(-2^{-1}(\sum{}_{i=0}^7(Q^+(e_i)|\sigma g P^-)e_i)
/(P^-|\sigma g P^-)\bigr)\right.\\
&\left.+
\mathcal{G}_2
\bigl(2^{-1}(\sum{}_{i=1}^7
(F_3^1(e_i)|\sigma g P^-)e_i)/(P^-|\sigma g P^-)
\bigr)\right)\biggr)\\
=&
\exp\left( \mathcal{G}_{-1}
\bigl(-2^{-1}(\sum{}_{i=0}^7(Q^-(e_i)|g P^-)e_i)
/(g P^-|\sigma P^-)\bigr)\right.\\
&\left.+
\mathcal{G}_{-2}
\bigl(-2^{-1}(\sum{}_{i=1}^7
(F_3^1(e_i)|g P^-)e_i)/(g P^-|\sigma P^-)
\bigr)\right).
\end{align*}
Moreover, we get
$n^+_G(g)=n_I((z_{g P^-})g)=n_I(n^-_G(g)^{-1}g)$
and
$m_G(g)= 
(z_{g P^-})g n^{-1} a_t^{-1}=
n^-_G(g)^{-1} g n^+_G(g)^{-1} a_G(g)^{-1}$.
Hence the result follows.
\end{proof}

\appendix

\section{The 
explicit formula $c$-function
of $\mathrm{F}_{4(-20)}$.}

Recall  
$\mathfrak{a} 
=\{t \tilde{A}_3^1(1)|~t \in \mathbb{R}\}$.
Let $\mathfrak{a}^*$ be the dual of $\mathfrak{a}$,
and $\mathfrak{a}_\mathbb{C}^*$  the
complexification of $\mathfrak{a}^*$,
and recall $\alpha\in \Sigma \subset \mathfrak{a}^*
\subset\mathfrak{a}_\mathbb{C}^*$ satisfies
$\alpha(\tilde{A}_3^1(1))=1$.
Let
$B(\cdot,\cdot)$ be the Killing form of $\mathfrak{f}_{4(-20)}$.
For $\lambda\in\mathfrak{a}^*$,
we define the element $H_{\lambda}\in \mathfrak{a}$
by $B(H_{\lambda},H)=\lambda(H)$ for all
$H\in\mathfrak{a}$,
and the bilinear form $\langle\cdot,\cdot\rangle$
on $\mathfrak{a}_\mathbb{C}^*$ by setting 
$\langle\lambda_1,\lambda_2\rangle
:=B(H_{\lambda_1},H_{\lambda_2})$ 
and extending it to the whole of 
$\mathfrak{a}_\mathbb{C}^*$ by linearity.
For any $\lambda\in \mathfrak{a}_\mathbb{C}^*$,
we define $\lambda_{\alpha}\in \mathbb{C}$ by
\[\lambda_{\alpha}
:=(2\langle\lambda,\alpha\rangle)/
\langle\alpha,\alpha\rangle.\]
Because of ${\rm dim}~\mathfrak{a}_\mathbb{C}^*
={\rm dim}~\mathfrak{a}=1$,
$\lambda=2^{-1}\lambda_{\alpha}\alpha$.
We denote $m_{\alpha}:={\rm dim}~\mathfrak{g}_{\alpha}
={\rm dim}~{\bf O}=8$
and $m_{2\alpha}:={\rm dim}~\mathfrak{g}_{2\alpha}
={\rm dim}~({\rm Im}{\bf O})=7$,
and we define 
$\rho\in\mathfrak{a}_\mathbb{C}^*$ by
\[\rho
:=2^{-1}(({\rm dim}~\mathfrak{g}_{\alpha})\alpha
+({\rm dim}~\mathfrak{g}_{2\alpha})2\alpha)
=2^{-1}(m_{\alpha}
+2m_{2\alpha})\alpha.\]
For $\lambda\in\mathfrak{a}_\mathbb{C}^*$,
we consider the {\it spherical function}
 $\varphi_{\lambda}$ on $\mathrm{F}_{4(-20)}$
and the {\it $c$-function of Harish-Chandra} 
on $\mathfrak{a}_\mathbb{C}^*$.
From \cite{HC1958} (cf. \cite{OS1980}, 
\cite{Os2003}, \cite{Sj2003}),
$\varphi_{\lambda}$
is given by
\[\varphi_{\lambda}(g)
:=\int_{K}
e^{(\lambda-\rho)(H(g k))}dk=\int_{N^-}
e^{(\lambda-\rho)(H(g z))}
e^{-(\lambda+\rho)(H(z))}
dz\]
for $g\in \mathrm{F}_{4(-20)}$, and 
the function 
$c$ is given by
\[c(\lambda)
:=\int_{N^-}e^{-(\lambda+\rho)(H(z))}
dz.\]
Here the measure $dk$ on compact group $K$
is normalized such that the total measure is $1$, and
the Haar measures $dn$ of nilpotent group $N^+$
and $dz$ of nilpotent group $N^-$ 
satisfy that
\[\tilde{\sigma}(dn)=dz\quad\text{and}\quad
\int_{N^-}e^{-2\rho(H(z))}dz=1.\]

\begin{lemma}\label{gk-01}
Let $t \in \mathbb{R}$,
$p\in{\rm Im}{\bf O}$, 
$x\in{\bf O}$, and $t\in\mathbb{R}$.
Assume that
$z
=
\exp(\mathcal{G}_{-2}(p)
+\mathcal{G}_{-1}(x))\in N^-$.
Then
\begin{align*}
\tag{1}
H(a_tz)
&=2^{-1}\log(e^{-2t}((e^{2t}+(x|x))^2+4(p|p)))
\tilde{A}_3^1(1),\\
\tag{2}
H(z)&=2^{-1}\log((1+(x|x))^2+4(p|p))
\tilde{A}_3^1(1).
\end{align*}
\end{lemma}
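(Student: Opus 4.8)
The plan is to reduce everything to the formula for the Iwasawa $\mathfrak{a}$-component already established in Main-Theorem~\ref{itd-04}, namely $H(g)=2^{-1}\log\bigl(-(gP^-|E_1)\bigr)\tilde{A}_3^1(1)$. Thus both (1) and (2) amount to computing the single scalar $-(gP^-|E_1)$ for $g=z$ and for $g=a_tz$. Since $E_1=h^1(1,0,0;0,0,0)$ has no octonionic part, formula~\eqref{itd001} shows that $(X|E_1)$ is simply the $E_1$-diagonal coefficient $(X)_{E_1}$ of $X$; so the whole problem collapses to reading off one coefficient of $zP^-$.

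The core computation is (2). The formulas \eqref{prl2002} and \eqref{prl2003} describe the action of $\exp\mathcal{G}_2(p)$ and $\exp\mathcal{G}_1(x)$, i.e.\ of $N^+$, whereas $z\in N^-$. I would convert via $\sigma$-conjugation: by \eqref{itd018}, $z=\tilde{\sigma}(\tilde{n})=\sigma\tilde{n}\sigma$ with $\tilde{n}=\exp\mathcal{G}_1(x)\exp\mathcal{G}_2(p)\in N^+$, and since $\sigma E_1=E_1$ and the action preserves the inner product, $(zP^-|E_1)=(\tilde{n}\,\sigma P^-|E_1)=(\tilde{n}\,\sigma P^-)_{E_1}$. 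Writing $\sigma P^-=2(-E_1+E_2)-P^-$ in the decomposition \eqref{prl2001}, I then apply \eqref{prl2002}(i),(ii),(v) followed by \eqref{prl2003}(i),(ii),(v), tracking only the coefficients of $-E_1+E_2$, of $P^-$, and of $E$ (all other summands land in $Q^\pm(\mathbf{O})$, $F_3^1(\mathrm{Im}\,\mathbf{O})$ or $\mathbb{R}E_3$ and contribute nothing to the $E_1$-coefficient). The expected outcome is the coefficient of $-E_1+E_2$ equal to $2$, that of $P^-$ equal to $(x|x)^2+4(p|p)-1$, and that of $E$ equal to $-2(x|x)$, giving
\[
(zP^-|E_1)=-2-\bigl((x|x)^2+4(p|p)-1\bigr)-2(x|x)=-\bigl((1+(x|x))^2+4(p|p)\bigr),
\]
and (2) follows by substitution into Main-Theorem~\ref{itd-04}.

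For (1) I would avoid redoing the calculation by using an $a_t$-conjugation. The argument of Lemma~\ref{prl2-09}(1), applied to the root spaces $\mathfrak{g}_{-\alpha},\mathfrak{g}_{-2\alpha}$, gives $a_t\mathcal{G}_{-1}(x)a_t^{-1}=\mathcal{G}_{-1}(e^{-t}x)$ and $a_t\mathcal{G}_{-2}(p)a_t^{-1}=\mathcal{G}_{-2}(e^{-2t}p)$, so $a_tza_t^{-1}=\exp(\mathcal{G}_{-1}(e^{-t}x)+\mathcal{G}_{-2}(e^{-2t}p))=:z'\in N^-$. Combined with $a_tP^-=e^{2t}P^-$ from Lemma~\ref{prl2-07}, this yields $a_tzP^-=z'(a_tP^-)=e^{2t}z'P^-$, hence $-(a_tzP^-|E_1)=e^{2t}\bigl(-(z'P^-|E_1)\bigr)$. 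Feeding the parameters $(e^{-t}x,e^{-2t}p)$ into (2) and simplifying $e^{2t}\bigl((1+e^{-2t}(x|x))^2+4e^{-4t}(p|p)\bigr)=e^{-2t}\bigl((e^{2t}+(x|x))^2+4(p|p)\bigr)$ gives exactly (1); note (2) is also recovered as the case $t=0$.

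The only delicate point is the explicit propagation of $\sigma P^-$ through $\exp\mathcal{G}_2(p)$ and then $\exp\mathcal{G}_1(x)$, where one must keep careful track of which terms of \eqref{prl2002}--\eqref{prl2003} feed back into the diagonal and $P^-$ directions. The $\sigma$-conjugation reduction and the observation that $(\cdot|E_1)$ extracts a single coefficient are what make this bookkeeping short, so I expect no substantive obstacle beyond this finite, mechanical expansion.
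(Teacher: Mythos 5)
Your proposal is correct, and its core computation for (2) coincides with the paper's: both reduce to evaluating $-(zP^-|E_1)$ via Main-Theorem~\ref{itd-04}(i), write $z=\sigma\exp\mathcal{G}_2(p)\exp\mathcal{G}_1(x)\sigma$ using (\ref{itd018}) and (\ref{itd015}), express $\sigma P^-=2(-E_1+E_2)-P^-$, and push this through (\ref{prl2002}) and (\ref{prl2003}); your coefficients ($2$ on $-E_1+E_2$, $(x|x)^2+4(p|p)-1$ on $P^-$, $-2(x|x)$ on $E$) are exactly what the paper's displayed $\eta_1=-(((x|x)+1)^2+4(p|p))$ encodes, and restricting attention to the three summands that feed the $E_1$-entry is a legitimate economy over the paper's full expansion. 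Where you genuinely diverge is part (1) and the logical order: the paper proves (1) directly by applying the explicit $\exp(t\tilde{A}_3^1(1))$-action (\ref{prl2005}) to the fully computed $X=\sigma\tilde{n}\sigma P^-$, extracting $(a_tX)_{E_1}$ from the $\cosh/\sinh$ formulas, and then obtains (2) as the specialization $t=0$; you instead prove (2) first and deduce (1) from the conjugation $a_tza_t^{-1}=\exp(\mathcal{G}_{-1}(e^{-t}x)+\mathcal{G}_{-2}(e^{-2t}p))$ (the negative-root analogue of Lemma~\ref{prl2-09}(1), with eigenvalues $-1,-2$ of $\mathrm{ad}_{\tilde{A}_3^1(1)}$) together with $a_tP^-=e^{2t}P^-$ from Lemma~\ref{prl2-07}, followed by the identity $e^{2t}\bigl((1+e^{-2t}(x|x))^2+4e^{-4t}(p|p)\bigr)=e^{-2t}\bigl((e^{2t}+(x|x))^2+4(p|p)\bigr)$. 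Your route buys a shorter argument that avoids the hyperbolic-function bookkeeping of (\ref{prl2005}) entirely, at the mild cost of having to state and justify the negative-root conjugation formula, which the paper only records for $\mathfrak{g}_{\alpha}\oplus\mathfrak{g}_{2\alpha}$; that one-line extension is unproblematic, so there is no gap.
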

\begin{proof} From (\ref{itd018}) and (\ref{itd015}),
 $z
=\sigma\exp\mathcal{G}_2(p)\exp\mathcal{G}_1(x)\sigma$.
Put $X=\sigma\exp\mathcal{G}_2(p)\exp\mathcal{G}_1(x)\sigma P^-$.
Using $\sigma P^-=2(-E_1+E_2)-P^-$, (\ref{prl2002}),
and 
(\ref{prl2003}), we calculate that
\begin{align*}
X
=&-(((x|x)+1)^2 +4(p|p))  E_1+(((x|x)-1)^2+4(p|p)) E_2\\
&+4(x|x)E_3+F_1^1(2((x|x) +2 p-1) x)\\
&+F_2^1(-2\overline{x}((x|x) -2 p+1) )+F_3^1(-(x|x)^2-4(p|p)+1+4 p).
\end{align*}
Set $X=h^1(\eta_1,\eta_2,\eta_3;y_1,y_2,y_3)$.
Using (\ref{prl2005}), 
we get
$(a_t z P^-|E_1)=(a_t X)_{E_1}
=2^{-1}((\eta_1+\eta_2)+(\eta_1-\eta_2)\cosh(2 t))
-(1|y_3)\sinh(2 t)$.
Because of $2^{-1}(\eta_1+\eta_2)=-2(x|x)$,
$2^{-1}(\eta_1-\eta_2)=-(x|x)^2-4(p|p)-1$,
and $(1|y_3)=-(x|x)^2-4(p|p)+1$,
we calculate that
\begin{align*}
(a_t z P^-|E_1)
&=-e^{-2t}
((x|x)^2+2e^{2t}(x|x)+e^{4t}+4(p|p))\\
&=-e^{-2t}
((e^{2t}+(x|x))^2+4(p|p)).
\end{align*}
Thus (1) follows from Main Theorem~\ref{itd-04}(i),
and substituting $t=0$ in (1),
we obtain (2).
\end{proof}
\medskip

\begin{proposition}\label{gk-02}
Assume $\lambda\in\mathfrak{a}_\mathbb{C}^*$,
\[a=4^{-1}(m_{\alpha}+2m_{2\alpha}+\lambda_{\alpha}),
\quad b=4^{-1}(m_{\alpha}+2m_{2\alpha}-\lambda_{\alpha}).\]
Then there exists the constant $C_0\in\mathbb{R}$ such that
\begin{align*}
\tag{1}
c(\lambda)&=C_0
\int_{\mathbb{R}^{m_{\alpha}}
\times \mathbb{R}^{m_{2\alpha}}}
((1+(x|x))^2+4(p|p))^{-
a}
dxdp,\\
\tag{2}
\varphi_{\lambda}(a_t)
&=C_0\int_{\mathbb{R}^{m_{\alpha}}
\times \mathbb{R}^{m_{2\alpha}}}e^{2bt}
((e^{2t}+(x|x))^2+4(p|p))^{-b}\cdot\\
&\quad((1+(x|x))^2+4(p|p))^{-a}
dxdp
\end{align*}
where the measure $dx$ and $dp$ are the Euclidean
measure.
\end{proposition}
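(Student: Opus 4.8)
The plan is to evaluate both integral representations directly, feeding in the explicit formulas for $H(z)$ and $H(a_t z)$ furnished by Lemma~\ref{gk-01}. First I would fix a parametrization of $N^-$: since $\mathfrak{n}^- = \mathfrak{g}_{-\alpha} \oplus \mathfrak{g}_{-2\alpha}$ and $N^- = \exp\mathfrak{n}^-$, every $z \in N^-$ is uniquely written as $z = \exp(\mathcal{G}_{-1}(x) + \mathcal{G}_{-2}(p))$ with $(x,p) \in {\bf O} \times \mathrm{Im}{\bf O} \cong \mathbb{R}^{m_{\alpha}} \times \mathbb{R}^{m_{2\alpha}}$. Because $N^-$ is a simply connected (two-step) nilpotent Lie group, in these exponential coordinates the Haar measure $dz$ equals a fixed constant $C_0$ times the Euclidean measure $dx\,dp$; this single constant will serve for both (1) and (2).

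Next I would reduce the two exponential factors to powers. Recalling $\alpha(\tilde{A}_3^1(1)) = 1$, $\rho = 2^{-1}(m_{\alpha} + 2m_{2\alpha})\alpha$, and $\lambda = 2^{-1}\lambda_{\alpha}\alpha$, one has $\lambda + \rho = 2^{-1}(\lambda_{\alpha} + m_{\alpha} + 2m_{2\alpha})\alpha$ and $\lambda - \rho = 2^{-1}(\lambda_{\alpha} - m_{\alpha} - 2m_{2\alpha})\alpha$. Evaluating the first on $H(z)$ via Lemma~\ref{gk-01}(2) gives
\[(\lambda+\rho)(H(z)) = a\log\bigl((1+(x|x))^2 + 4(p|p)\bigr),\]
so that $e^{-(\lambda+\rho)(H(z))} = ((1+(x|x))^2 + 4(p|p))^{-a}$; substituting into the definition of $c(\lambda)$ yields (1). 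Similarly, evaluating $\lambda-\rho$ on $H(a_t z)$ via Lemma~\ref{gk-01}(1), and using $-b = 4^{-1}(\lambda_{\alpha} - m_{\alpha} - 2m_{2\alpha})$, gives
\[(\lambda-\rho)(H(a_t z)) = -b\log\bigl(e^{-2t}((e^{2t}+(x|x))^2 + 4(p|p))\bigr),\]
whence $e^{(\lambda-\rho)(H(a_t z))} = e^{2bt}((e^{2t}+(x|x))^2 + 4(p|p))^{-b}$. Multiplying by the factor $e^{-(\lambda+\rho)(H(z))}$ already computed and integrating over $N^-$ in the above coordinates produces (2).

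The computational heart of the argument has in fact already been carried out in Lemma~\ref{gk-01}, so the remaining steps are bookkeeping with the linear functionals and the constants $a$ and $b$; I expect no serious difficulty there. The only genuinely non-formal point is the identification of $dz$ with $C_0\,dx\,dp$, which I would justify by the standard fact that the exponential map of a simply connected nilpotent Lie group carries Lebesgue measure on the Lie algebra to a Haar measure with constant Jacobian. The precise value of $C_0$ is immaterial, since the proposition asserts only its existence and uses the same constant in both formulas; the normalization $\int_{N^-} e^{-2\rho(H(z))}\,dz = 1$ would pin $C_0$ down if desired, but this is not required for the stated identities.
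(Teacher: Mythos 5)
Your proposal is correct and follows exactly the route the paper intends: the paper's own proof of Proposition~\ref{gk-02} consists of the single line ``It follows from Lemma~\ref{gk-01},'' and your argument supplies precisely the omitted bookkeeping (exponential coordinates on the nilpotent group $N^-$ identifying $dz$ with $C_0\,dx\,dp$, and the evaluation of $(\lambda\pm\rho)$ on the vectors $H(z)$, $H(a_tz)$ from Lemma~\ref{gk-01}). The computations of the exponents $a$, $b$ and of the factor $e^{2bt}$ check out, so nothing further is needed.
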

\begin{proof}
It follows from Lemma~\ref{gk-01}.
\end{proof}
\medskip

From \cite[Lemma~7.2]{Na2012},
\[
B(\phi,\tilde{\sigma}\phi)
=-3\left(\sum{}_{i=1}^3\left((\sum{}_{j=0}^7
(D_ie_j|D_ie_j))+24(a_{i}|a_{i})\right)\right)\]
where
$\phi=d\varphi_0(D_1,D_2,D_3)
+\sum_{i=1}^3\tilde{A}_i^1(a_i)$
with $d\varphi_0(D_1,D_2,D_3)\in\mathfrak{d}_4$
and $a_i\in{\bf O}$.
We denote $Q(\phi):=-\langle \alpha,\alpha \rangle
B(\phi,\tilde{\sigma}\phi)$
for $\phi\in\mathfrak{f}_{4(-20)}$.
Then from 
direct calculations, we have
the following proposition.

\begin{proposition}\label{gk-03}
If $\lambda\in \mathfrak{a}^*_{\mathbb{C}}$,
$p\in{\rm Im}{\bf O}$, and $x\in{\bf O}$,
then $Q(\mathcal{G}_{-1}(x))=2(x|x)$ and 
$Q(\mathcal{G}_{-2}(p))=2(p|p).$
\end{proposition}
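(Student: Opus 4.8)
The plan is to reduce each identity to the explicit formula for $B(\phi,\tilde\sigma\phi)$ recorded just before the statement, after first pinning down the normalization constant $\langle\alpha,\alpha\rangle$.

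First I would compute $\langle\alpha,\alpha\rangle$. By definition $\langle\alpha,\alpha\rangle=B(H_\alpha,H_\alpha)$, where $H_\alpha=\lambda\,\tilde A_3^1(1)\in\mathfrak a$ is determined by $B(H_\alpha,\tilde A_3^1(1))=\alpha(\tilde A_3^1(1))=1$. By (\ref{itd016}) we have $\tilde\sigma\tilde A_3^1(1)=-\tilde A_3^1(1)$, so applying the quoted formula to $\phi=\tilde A_3^1(1)$ (here $D_1=D_2=D_3=0$, $a_1=a_2=0$, $a_3=1$) gives $B(\tilde A_3^1(1),\tilde\sigma\tilde A_3^1(1))=-3\cdot 24\,(1|1)=-72$, whence $B(\tilde A_3^1(1),\tilde A_3^1(1))=72$. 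Thus $\lambda=1/72$ and $\langle\alpha,\alpha\rangle=\lambda^2\cdot 72=1/72$.

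For $\mathcal G_{-1}(x)=\tilde A_1^1(x)+\tilde A_2^1(\overline x)$ there is no $\mathfrak d_4$ component, so in the notation of the quoted formula $D_1=D_2=D_3=0$, $a_1=x$, $a_2=\overline x$, $a_3=0$. Since conjugation is an isometry, $(\overline x|\overline x)=(x|x)$, and therefore $B(\mathcal G_{-1}(x),\tilde\sigma\mathcal G_{-1}(x))=-3\bigl(24(x|x)+24(\overline x|\overline x)\bigr)=-144(x|x)$. Consequently $Q(\mathcal G_{-1}(x))=-\langle\alpha,\alpha\rangle\bigl(-144(x|x)\bigr)=2(x|x)$, which is the first identity.

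The case $\mathcal G_{-2}(p)=\tilde A_3^1(p)-\delta(p)$ is the only one carrying a nonzero $\mathfrak d_4$ part, and this is where the main work lies. Writing $-\delta(p)=d\varphi_0(D_1,D_2,D_3)$ with the explicit $D_i$ from \S7 of \cite{Na2012} (so that $a_3=p$, $a_1=a_2=0$), the quoted formula yields $B(\mathcal G_{-2}(p),\tilde\sigma\mathcal G_{-2}(p))=-3\bigl(\sum_{i=1}^3\sum_{j=0}^7(D_ie_j|D_ie_j)+24(p|p)\bigr)$, so the desired identity $Q(\mathcal G_{-2}(p))=2(p|p)$ is equivalent to $\sum_{i=1}^3\sum_{j=0}^7(D_ie_j|D_ie_j)=24(p|p)$. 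The hard part is precisely to evaluate this sum from the explicit form of $\delta(p)$: each $D_i$ is multiplication by the purely imaginary octonion $p$ (on the left or right, up to sign), which is skew-symmetric and satisfies $|D_ie_j|=|p|\,|e_j|=|p|$, so that $\sum_{j=0}^7(D_ie_j|D_ie_j)=8(p|p)$ for each $i$ and the total over $i=1,2,3$ is $24(p|p)$. Substituting back gives $B(\mathcal G_{-2}(p),\tilde\sigma\mathcal G_{-2}(p))=-144(p|p)$, hence $Q(\mathcal G_{-2}(p))=2(p|p)$, completing the verification.
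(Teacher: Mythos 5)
Your proposal takes exactly the route the paper intends: the paper's ``proof'' is literally the phrase ``direct calculations'' applied to the quoted formula for $B(\phi,\tilde\sigma\phi)$ from \cite[Lemma~7.2]{Na2012}, and your normalization $\langle\alpha,\alpha\rangle=B(H_\alpha,H_\alpha)=1/72$ (via $B(\tilde A_3^1(1),\tilde A_3^1(1))=72$) and your treatment of $\mathcal G_{-1}(x)=\tilde A_1^1(x)+\tilde A_2^1(\overline x)$ are that calculation, carried out correctly. The one inaccuracy is in your description of the $\mathfrak d_4$-part of $\mathcal G_{-2}(p)$: it is not true that all three components of $\delta(p)$ are multiplication operators. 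Reading $\delta(p)$ off from the relations (\ref{prl2002})--(\ref{prl2005}), the components acting on the $F_1^1$- and $F_2^1$-parts are indeed $L_p$ and $R_p$, but the component acting on the $F_3^1$-part is the rank-two skew map $z\mapsto 2\bigl((p|z)1-(1|z)p\bigr)$ (the conjugate of $L_p+R_p$ under octonion conjugation, as forced by the triality relation $(px)y+x(yp)=p(xy)+(xy)p$), which is not multiplication by $p$ and does not satisfy $|D_3e_j|=|p|$ for each $j$. Your conclusion nevertheless survives: writing $p=\sum_{j=1}^7p_je_j$, this map sends $1\mapsto-2p$ and $e_j\mapsto 2p_j\cdot 1$ for $j\ge 1$, so its Hilbert--Schmidt norm squared is again $4(p|p)+4(p|p)=8(p|p)$ --- as it must be, since triality permutes the three components by automorphisms of $\mathfrak{so}(8)$ preserving the Killing form, so all three components of a triple in $\tilde D_4$ have equal norm. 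With that correction the total is still $24(p|p)$, giving $B(\mathcal G_{-2}(p),\tilde\sigma\mathcal G_{-2}(p))=-144(p|p)$ and $Q(\mathcal G_{-2}(p))=2(p|p)$ as claimed.
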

\medskip

\begin{corollary}\label{gk-04}
{\rm (\cite{Hl1970}, \cite{Sg1971}, 
cf. \cite[Lemma~4.12 and (4.27)]{OS1980})}.
Let $\lambda\in \mathfrak{a}^*_{\mathbb{C}}$.
For any $X\in \mathfrak{g}_{-\alpha}$ and
$Y\in \mathfrak{g}_{-2\alpha}$,
\[e^{\lambda(H(\exp(X+Y)))}
=\left((1+2^{-1}Q(X))^2
+2Q(Y)\right)^{4^{-1}\lambda_{\alpha}}.\]
\end{corollary}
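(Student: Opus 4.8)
The plan is to reduce the entire statement to the single explicit computation already carried out in Lemma~\ref{gk-01}(2). First I would use the parameterizations (\ref{itd013}) and (\ref{itd014}) to write $X=\mathcal{G}_{-1}(x)$ for a unique $x\in{\bf O}$ and $Y=\mathcal{G}_{-2}(p)$ for a unique $p\in\mathrm{Im}{\bf O}$. Then
\[
\exp(X+Y)=\exp(\mathcal{G}_{-1}(x)+\mathcal{G}_{-2}(p))
\]
is precisely the element $z\in N^-$ appearing in Lemma~\ref{gk-01}, so that lemma gives
\[
H(\exp(X+Y))=2^{-1}\log\bigl((1+(x|x))^2+4(p|p)\bigr)\tilde{A}_3^1(1).
\]

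Next I would evaluate $\lambda$ on this vector. Since $\dim\mathfrak{a}=1$ we have $\lambda=2^{-1}\lambda_{\alpha}\alpha$, and $\alpha(\tilde{A}_3^1(1))=1$, so
\[
\lambda(H(\exp(X+Y)))=4^{-1}\lambda_{\alpha}\log\bigl((1+(x|x))^2+4(p|p)\bigr).
\]
Exponentiating yields
\[
e^{\lambda(H(\exp(X+Y)))}=\bigl((1+(x|x))^2+4(p|p)\bigr)^{4^{-1}\lambda_{\alpha}}.
\]

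Finally I would translate the octonionic norms into the quadratic form $Q$. By Proposition~\ref{gk-03}, $Q(X)=Q(\mathcal{G}_{-1}(x))=2(x|x)$ and $Q(Y)=Q(\mathcal{G}_{-2}(p))=2(p|p)$, whence $(x|x)=2^{-1}Q(X)$ and $4(p|p)=2Q(Y)$. Substituting these into the base turns $(1+(x|x))^2+4(p|p)$ into $(1+2^{-1}Q(X))^2+2Q(Y)$, which is exactly the claimed formula.

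There is essentially no obstacle here: the entire analytic content, namely the dependence of the Iwasawa $\mathfrak{a}$-factor on the octonion data, was already established in Lemma~\ref{gk-01}, itself a consequence of the explicit Iwasawa decomposition (Main Theorem~\ref{itd-04}(i)). The only points requiring care are bookkeeping ones: correctly combining $\lambda=2^{-1}\lambda_{\alpha}\alpha$ with $\alpha(\tilde{A}_3^1(1))=1$ to produce the exponent $4^{-1}\lambda_{\alpha}$, and matching the normalizations $Q(X)=2(x|x)$ and $Q(Y)=2(p|p)$ from Proposition~\ref{gk-03} so that the factors of $2$ and $4$ inside the base come out correctly.
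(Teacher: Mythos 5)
Your proposal is correct and follows exactly the route the paper intends: the corollary is stated without a written proof precisely because it is the combination of Lemma~\ref{gk-01}(2) (itself from Main Theorem~\ref{itd-04}(i)) with the normalizations $Q(\mathcal{G}_{-1}(x))=2(x|x)$, $Q(\mathcal{G}_{-2}(p))=2(p|p)$ of Proposition~\ref{gk-03} and the identity $\lambda=2^{-1}\lambda_{\alpha}\alpha$ with $\alpha(\tilde{A}_3^1(1))=1$. All your bookkeeping of the factors of $2$ and $4$ checks out.
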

\medskip

\begin{remark}\label{gk-05}{\rm 
(\cite{Hl1970}, \cite{Sg1971}, 
cf. \cite{Os2003}, \cite{Sj2003}).
From Proposition~\ref{gk-02}(1),
changing variables to polar coordinates, up to
the constant multiple, $c(\lambda)$ is equal to
\begin{align*}
&\int_0^{\infty}\int_0^{\infty}
t^{m_{\alpha}-1}s^{m_{2\alpha}-1}
((1+t^2)^2+s^2)^{-4^{-1}
(\lambda_{\alpha}+m_{\alpha}+2m_{2\alpha})}
dsdt\\
=&
\int_0^{\infty}\int_0^{\infty}
(s/(1+t^2))^{m_{2\alpha}-1}
(1+(s/(1+t^2))^2)^{-4^{-1}
(\lambda_{\alpha}+m_{\alpha}+2m_{2\alpha})}\\
&\quad
\cdot t^{m_{\alpha}-1}
(1+t^2)^{-2^{-1}(\lambda_{\alpha}+m_{\alpha})+1}
dsdt\\
=&
\int_0^{\infty}
u^{m_{2\alpha}-1}
(1+u^2)^{-4^{-1}
(\lambda_{\alpha}+m_{\alpha}+2m_{2\alpha})}du\\
&\quad\quad
\cdot 
\int_0^{\infty}
t^{m_{\alpha}-1}
(1+t^2)^{-2^{-1}(\lambda_{\alpha}+m_{\alpha})}
dt.
\end{align*}
By using the integral formula
\[\int_0^{\infty}
x^a(1+x^c)^{-(b+1)}dx
=c^{-1}\Gamma[(a+1)/c]\Gamma[b-((a-c+1)/c)]
/\Gamma(1+b)\]
$({\rm Re}(c)>0;~{\rm Re}(a),{\rm Re}(b)>-1;
~{\rm Re}(b)>{\rm Re}((a-c+a)/c)),$
up to
the constant multiple, this integral is equal to
\[\left(\Gamma(\lambda_{\alpha}/2)
\Gamma((\lambda_{\alpha}+m_{\alpha})/4)\right)
/\left(\Gamma((\lambda_{\alpha}+m_{\alpha})/2)
\Gamma((\lambda_{\alpha}+m_{\alpha}
+2m_{2\alpha})/4)\right).\]
These  calculations
imply the
Gindikin and Karpelevich formula
of the semisimple 
Lie group $\mathrm{F}_{4(-20)}$ which is known
\cite{GK} (cf. \cite[(4.3)]{Os2003}, \cite{Knp2003}).
}\end{remark}

{\bf Acknowledgment.}\quad\quad
The author would like to thank Professor Osami Yasukura 
for his advices and encouragements.

\end{document}